\newtheorem{theorem}{Theorem}[section]
\newtheorem{lemma}[theorem]{Lemma}
\theoremstyle{definition}
\newtheorem{rem}[theorem]{Remark}
\providecommand{\abs}[1]{\lvert#1\rvert}
\numberwithin{equation}{section}
\title{Mean curvature flows of graphs \\ sliding off to infinity in warped product manifolds}
\author{Naotoshi Fujihara}
\date{}
\begin{document}

\maketitle

%%%%%%%%%%%%%%%%%%%%%%%%%%%%%%%%%%%%%%%%%%%%%%%%%%%%
\begin{abstract}
    We study mean curvature flows in a warped product manifold defined by a closed Riemannian manifold and $\mathbb{R}$. 
    In such a warped product manifold, we can define the notion of a graph, called a geodesic graph. 
    We prove that the curve shortening flow preserves a geodesic graph for any warping function, and the mean curvature flow of hypersurfaces preserves a geodesic graph for some monotone convex warping functions. 
    In particular, we consider some warping functions that go to zero at infinity, which means that the curves or hypersurfaces go to a point at infinity along the flow. 
    In such a case, we prove the long-time existence of the flow and that the curvature and its higher-order derivatives go to zero along the flow.
\end{abstract}

%%%%%%%%%%%%%%%%%%%%%%%%%%%%%%%%%%%%%%%%%%%%%%%%

\section{Introduction}
\label{sec:Introduction}

Let $F \colon M \times [0,T) \to \overline{M}$ be a smooth mapping and $F_t = F(\cdot,t)$ an immersion of an $n$-dimensional closed orientable manifold $M$ into an $(n+1)$-dimensional orientable Riemannian manifold $(\overline{M},\overline{g})$ for every $t \in [0,T)$. 
The family of immersions $\{F_t\}_{t \in [0,T)}$ is called a \textit{mean curvature flow} if it is a solution to the equation 
\begin{equation}
    \frac{\partial F}{\partial t} = - H_t N_t, \label{eq:MCF}
\end{equation}
where $N_t$ is a unit normal vector field of $F_t(M)$ and $H_t$ is a mean curvature of $F_t$ with respect to $-N_t$. 
When $n=1$, the mean curvature flow is especially called a \textit{curve shortening flow}. 
The short-time existence and uniqueness of the solution to (\ref{eq:MCF}) are guaranteed for an immersion of closed manifolds, and the mean curvature flow has been studied extensively. 
In particular, for general ambient spaces, it has been studied by Huisken \cite{huisken1986contracting}, Grayson \cite{grayson1989shortening} and Gage \cite{gage1990curve}, for example.
In this paper, we shall consider a warped product manifold for an ambient space, and we will study the graphical motion of the mean curvature flow in that space. 

We then introduce warped product manifolds. 
Let $(M, g_M)$ be an $n$-dimensional closed Riemannian manifold, and $r(z)$ a positive function defined on an open interval $I$.
A warped product manifold $(\overline{M}, \overline{g})$ of $M$ and $I$ with a warping function $r$ is defined by a product manifold $M \times I$ with a metric 
\begin{align*}
\overline{g} = r(z)^2 g_M + dz \otimes dz.
\end{align*}
The metric $\overline{g}$ is also denoted by $\langle \cdot, \cdot \rangle$.
The warped product manifold includes spaces of constant curvature. 
We give examples when $n = 1$, that is, $(M,g_M)$ is a unit circle $(\mathbb{S}^1,d \theta)$.
\begin{itemize}
\item
We set $I = (0, \infty)$ and $r(z) = z$. The metric
\begin{align*}
\overline{g} = z^2 d \theta \otimes d \theta + d z \otimes d z
\end{align*}
defines a space of constant (Gauss) curvature $0$.
\item
For $k > 0$, we set $I = (-\pi/\sqrt{k}, \pi/ \sqrt{k})$ and $r(z) = \cos \sqrt{k} z$. The metric
\begin{align*}
\overline{g} = \cos^2 (\sqrt{k} z )d \theta \otimes d \theta + d z \otimes d z
\end{align*}
defines a space of constant (Gauss) curvature $k$.
\item
For $k > 0$, we set $I = (-\infty, \infty)$ and $r(z) = e^{\sqrt{k} z}$. The metric
\begin{align*}
\overline{g} = e^{2 \sqrt{k} z} d \theta \otimes d \theta + d z \otimes d z
\end{align*}
defines a space of constant (Gauss) curvature $-k$.
\end{itemize}

We shall define the notion of a graph in $(\overline{M}, \overline{g})$.  
A hypersurface in $(\overline{M}, \overline{g})$ is a geodesic graph (simply called a graph in this paper) if and only if the angle function $\langle \partial_z, N \rangle$ is positive, where $\partial_z$ is a coordinate vector in $\overline{M}$ and $N$ is a unit normal vector to the hypersurface. 
The graphical mean curvature flow of hypersurfaces in Euclidean space was studied by Ecker-Huisken \cite{ecker1989entire,ecker1991interior}. 
For hypersurfaces in warped product manifolds, it is studied by Borisenko and Miquel \cite{borisenko2012}, and by Huang, Zhan, and Zhou \cite{huang2019}.
In \cite{borisenko2012}, they studied the case of warped product manifolds $(M \times I, g_M + \phi(p)^2 dz \otimes dz)$, where $(M, g_M)$ is a Riemannian manifold and $\phi$ is a positive function on $M$. 
In the same way as above, the angle function for hypersurfaces in this warped product manifold can be defined, and if the angle function is positive, the hypersurface is called an equidistant graph. 
In \cite{huang2019}, they consider the geodesic graph for $\dim M \geq 2$. 
The author studied the curve shortening flow on the surfaces of revolution in \cite{F}.
The warped product manifolds can be considered as a generalization of the surface of revolution.
In fact, we can reparameterize the axis of rotation, and then we can obtain the metric of warped products.
Theorem \ref{thm:1} is a generalization of the theorem in \cite{F}. 

\begin{theorem}
\label{thm:1}
Let $(\overline{M}, \overline{g}= r(z)^2 d \theta \otimes d \theta + d z \otimes d z)$ be a warped product manifold of a unit circle $(\mathbb{S}^1, d \theta \otimes d \theta)$ and an interval $(I, dz \otimes dz)$ with a smooth positive function $r \colon I \to \mathbb{R}$. 
We assume $F \colon \mathbb{S}^1 \times [0,T) \to \overline{M}$ to be a curve shortening flow with $F_0$ a graph. 
Then the following statements hold:
\begin{itemize}
\item[(i)] The flow preserves a graph on $[0,T)$;
\item[(ii)] Assume $I = (-\infty, a)$ and $r'(z) > 0$ on $I$. 
\begin{itemize}
\item[(a)] If $\sup_{z \in I} r'(z)/r(z) < \infty$, the flow $\{ F_t \}_t$ exists on $[0,\infty)$. 
\item[(b)] If $r(z)$ satisfies $r(z) r''(z) - 2 {r'(z)}^2 \geq 0$ on $I$, then we have 
\begin{align*}
    \kappa_t \to 0 \, (t \to \infty).
\end{align*}
Furthermore, if we have $\sup_{z \in I}\abs{ r^{(i)}(z) / r(z) } < \infty$ for all $i \geq 2$, then we also have 
\begin{align*}
    \partial_{s}^{(m)} \kappa_t \to 0 \, (t \to \infty)
\end{align*}
for all $m \geq 1$, where $\partial_s$ is a derivative with respect to the arc length $s$.
\end{itemize}
\end{itemize} 
\end{theorem}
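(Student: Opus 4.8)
The plan is to reduce the theorem to a scalar quasilinear parabolic equation for the graph function together with two geometric evolution equations, and then to run maximum-principle and parabolic-regularity arguments. Since $F_0$ is a graph, write $F_t(\mathbb{S}^1)=\{z=u(\theta,t)\}$ for as long as this is possible; a direct computation gives
\[
 u_t=\frac{u_{\theta\theta}}{r^2+u_\theta^2}-\frac{r'}{r}\cdot\frac{r^2+2u_\theta^2}{r^2+u_\theta^2},\qquad r=r(u),
\]
that the angle function is $v=\langle\partial_z,N\rangle=r/\sqrt{r^2+u_\theta^2}$, and that the curvature satisfies $\kappa=v\,u_t$. I would also record the evolution equations along the flow,
\[
 \partial_t v=\partial_s^2 v+\Bigl(\mu^2+p^2\,\frac{rr''-2(r')^2}{r^2}\Bigr)v,\qquad
 \partial_t\kappa=\partial_s^2\kappa+\kappa^3-\frac{r''}{r}\,\kappa ,
\]
where $p=\langle\partial_z,T\rangle$ (so $p^2=1-v^2$), $\mu=\kappa+\tfrac{r'}{r}v$, and $-r''/r$ is the Gauss curvature of $\overline M$ along the curve. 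The decisive feature is that the zeroth-order coefficient in the $v$-equation is $\geq\mu^2\geq0$ exactly when $rr''-2(r')^2\geq0$.

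\textbf{Parts (i) and (ii)(a).} For (i): on any $[0,T']\subset[0,T)$ the set $F(\mathbb{S}^1\times[0,T'])$ is compact, so $u$ stays in a compact subinterval of $I$, where $(rr''-2(r')^2)/r^2\geq-C$; hence $\partial_t v\geq\partial_s^2 v-Cv$, and the parabolic maximum principle gives $\min_{\mathbb{S}^1}v(\cdot,t)\geq e^{-Ct}\min_{\mathbb{S}^1}v(\cdot,0)>0$ on $[0,T']$, so a graph is preserved. For (ii)(a): at an interior spatial maximum of $u$, $u_\theta=0$ and $u_{\theta\theta}\leq0$ give $u_t\leq-r'/r<0$, so $\max_{\mathbb{S}^1}u(\cdot,t)\leq\max_{\mathbb{S}^1}u(\cdot,0)<a$; at an interior minimum, $u_t\geq-r'/r\geq-\sup_I(r'/r)$, so $\min_{\mathbb{S}^1}u(\cdot,t)\geq\min_{\mathbb{S}^1}u(\cdot,0)-t\sup_I(r'/r)$, ruling out escape to $-\infty$ in finite time. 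Thus if $T<\infty$ then $u$ stays in a fixed compact subinterval of $I$, the $v$-estimate above again gives $v\geq c(T)>0$ and hence a bound on $\abs{u_\theta}$, and the $u$-equation is uniformly parabolic with controlled smooth coefficients; Krylov--Safonov together with Schauder estimates then bound all derivatives of $u$ up to $t=T$, so the flow extends past $T$, contradicting maximality. Hence $T=\infty$.

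\textbf{Part (ii)(b).} When $rr''-2(r')^2\geq0$ the $v$-equation gives $\partial_t v\geq\partial_s^2 v$, so $\min_{\mathbb{S}^1}v(\cdot,t)$ is nondecreasing and $v\geq v_0>0$ for all time; since moreover $r'/r^2$ is then nondecreasing, $r'/r$ is bounded on $(-\infty,\max_{\mathbb{S}^1}u(\cdot,0)]$, so (a) applies and the flow exists for all time. Now $\abs{u_\theta}\leq r(u)\sqrt{v_0^{-2}-1}$, so $u$ has bounded oscillation; and $\frac{d}{dt}\oint u\,d\theta=-\oint\frac{r'(u)}{r(u)}\,d\theta$ cannot stay bounded away from $0$ unless $u\to-\infty$, so $u\to-\infty$ and hence $r(u)\to0$, $\abs{u_\theta}\to0$, $v\to1$, all uniformly in $\theta$. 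Since $\kappa=v\,u_t$, it remains to prove $u_t\to0$, that is, $\kappa_t\to0$. For this I would use the curvature equation: as $v\to1$ the curve becomes uniformly close to a slice $\{z=\text{const}\}$, whose geodesic curvature is $-r'/r$, and $rr''-2(r')^2\geq0$ gives $(r'/r)^2\leq\tfrac12\,r''/r$, so the reaction term $\kappa(\kappa^2-r''/r)$ is eventually strictly dissipative. Quantitatively, I would first show $\oint\kappa^2\,ds\to0$: the length $L(t)=\oint ds$ is nonincreasing with $\int_0^\infty\oint\kappa^2\,ds\,dt=L(0)-\lim L(t)<\infty$, and an upper bound for $\frac{d}{dt}\oint\kappa^2\,ds$ from the curvature equation (an interpolation inequality on the curve being used to absorb the $\oint\kappa^4\,ds$ term) forces the integrand to $0$; then I would upgrade to $\max_{\mathbb{S}^1}\abs{\kappa}\to0$ via a further interpolation inequality together with a bound on $\oint(\partial_s\kappa)^2\,ds$. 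Finally, when $\sup_I\abs{r^{(i)}/r}<\infty$ for all $i\geq2$, the evolution equation for $\partial_s^{(m)}\kappa$ reads $\partial_t\partial_s^{(m)}\kappa=\partial_s^2\partial_s^{(m)}\kappa+P_m$, where $P_m$ is a universal polynomial in the $\partial_s^{(j)}\kappa$ ($j\leq m$), $p$, and the now-bounded quantities $r^{(i)}(u)/r(u)$; an induction on $m$, using the established decay $\kappa_t\to0$, the uniform bounds above, and parabolic smoothing/interpolation estimates, gives $\partial_s^{(m)}\kappa_t\to0$.

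\textbf{Main obstacle.} The maximum-principle parts are routine; the real difficulty is the decay $\kappa_t\to0$. The curve simultaneously flattens and shrinks to a point, so near a slice the stabilizing term $-(r''/r)\kappa$ only barely dominates the destabilizing $\kappa^3$ — the margin being exactly the factor $2$ in the hypothesis $rr''-2(r')^2\geq0$ — while $L(t)\to0$ degrades the interpolation inequalities one wants to use. Turning the dissipativity into a genuine decay statement, by carefully tracking the interplay of $L(t)$, $\oint\kappa^2\,ds$ and $\oint(\partial_s\kappa)^2\,ds$, is the technical heart of the proof.
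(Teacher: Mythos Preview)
Your treatment of (i) and (ii)(a) is essentially correct and parallels the paper, with only cosmetic differences: the paper works with $\Theta^{-1}$ rather than $\Theta$, and for long-time existence it bounds $\kappa$ through the Ecker--Huisken auxiliary function $\mathfrak{g}=\varphi(\Theta^{-1})\kappa^2$ rather than invoking Krylov--Safonov/Schauder theory, but either route closes the argument. (One small slip: your formula $\tfrac{d}{dt}\oint u\,d\theta=-\oint\tfrac{r'}{r}\,d\theta$ is not exact, since the second-order term in the graph equation is not a $\theta$-derivative; the cleanest way to force $u\to-\infty$ is comparison with the slice solutions $z'=-r'/r$, which the paper uses.)

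The real gap is in (ii)(b), and it is precisely the obstacle you flag yourself. Your route to $\kappa_t\to0$ goes through $\int_0^\infty\!\oint\kappa^2\,ds\,dt<\infty$, a differential inequality for $\oint\kappa^2\,ds$, and Gagliardo--Nirenberg interpolation on the curve; but every such interpolation inequality carries negative powers of the length $L(t)$, and here $L(t)\to0$. You do not explain how to beat this degeneration, and I do not see that the ``barely dissipative'' structure $\kappa(\kappa^2-r''/r)$ gives you enough margin to do so. The paper avoids the issue entirely by a pointwise argument: one shows that the Ecker--Huisken quantity
\[
\mathfrak{g}=\frac{\Theta^{-2}}{1-k\,\Theta^{-2}}\,\kappa^2,\qquad k=\tfrac12\bigl(\sup\Theta^{-1}\bigr)^{-2},
\]
satisfies at its spatial maximum
\[
\frac{d\mathfrak{g}_{\max}}{dt}\le\Bigl(-2k\,\mathfrak{g}_{\max}+C\,\frac{r'}{r}\Bigr)\mathfrak{g}_{\max},
\]
the bracketed linear-in-$\mathfrak{g}$ terms being nonpositive exactly under $rr''-2(r')^2\ge0$. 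Since that hypothesis also forces $r'/r\to0$ as $z\to-\infty$ (an elementary ODE lemma), the coefficient eventually becomes $\le-2k(\mathfrak{g}_{\max}-\varepsilon)$ for any $\varepsilon>0$, whence $\mathfrak{g}_{\max}\to0$ and so $\kappa\to0$. This maximum-principle argument is insensitive to $L(t)\to0$ and requires no integral estimates. Once $\kappa\to0$ and $\kappa$ is uniformly bounded, the higher-derivative decay follows by the induction you outline, using that the reaction terms in $(\partial_t-\Delta)(\partial_s^{(m)}\kappa)^2$ are polynomials in lower-order quantities with no constant term.
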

We give an example of a function $r(z)$ that satisfies the above assumption.
We set $r(z) = (-z)^{-\beta}$ for $z < 0$ and $\beta > 0$, then we have 
\begin{align*}
\frac{r'(z)}{r(z)} = -\frac{\beta}{z}, \quad r(z) r''(z) - 2 {r'(z)}^2 = \beta  (1 - \beta) (-z)^{-2 (\beta + 1)}.
\end{align*}
Thus, we define $I = (-\infty, a)$ for any $a < 0$, then the function $r \colon I \to \mathbb{R}$ satisfies the first condition; the second condition is satisfied when we set $\beta \leq 1$. 
For $\sup_{z \in I} r'(z)/ r(z) = \infty$, we will make some remarks in Section \ref{sec:case1}.
We then move on to the case $n = \dim M \geq 2$. 
The property that the flow preserves a geodesic graph is not expected to work well in a general setting, and, actually we can construct an example that does not preserve its graph property along the mean curvature flow (see the appendix).
In \cite{huang2019}, Huang, Zhan, and Zhou assume the following four conditions for $r \colon (-a, a) \to (0,\infty)$ and $(M, g_M)$:
\begin{itemize}
\item[(0)] $\mathrm{Ric}_M \geq n \rho g_M$; 
\item[(1)] $r(0) = 1$ and  $r'(0) = 0$; 
\item[(2)] $r'(z) > 0$ for all  $z \in (0,a)$ and $r'(z) < 0$ for all  $z \in (-a,0)$;
\item[(3)] $r(z) r''(z) - {r'(z)}^2 + \rho \geq c$,
\end{itemize}
where $\rho \in \mathbb{R}$ and $c = \max \{ 0, \rho \}$. 
Under these conditions, they showed that the flow with some initial condition remains a graph, and it exists for all time and converges to $M \times \{0\}$. 
We then consider a similar setting; Let $r$ be defined on $I = (-\infty, a)$ and let $r'(z) > 0$ hold for all $z \in I$. 
This can be considered as a right half of the above setting.
Therefore, if $\lim_{z \to -\infty} r(z) = 1$ (or converges to any other positive constant), the flow remains a geodesic graph under conditions (0) and (3).
However, if $\lim_{z \to -\infty} r(z) = 0$, the proof in \cite{huang2019} cannot be applied, since $r(0) > 0$ is essentially used in its proof.
Thus, we replace condition (3) by 
\begin{align*}
r(z) {r''}(z) - (1 + \alpha) {r'}(z)^2 + \rho \geq c,
\end{align*}
where  $c = \max\{\rho, 0\}$ and $\alpha > 1$. 
This condition is considered as a generalization of the inequality $r r'' - 2 {r'}^2 \geq 0$. 
In this paper, we will prove the following theorem.
\begin{theorem}
\label{thm:2}
Let $I = (-\infty, a)$ and $r \colon I \to \mathbb{R}$ be a positive smooth function and set $(\overline{M},\overline{g}) = (M \times I, r(z)^2 g_M + d z \otimes d z)$, where $(M, g_M)$ is a closed Riemannian manifold. 
Assume the following three conditions:
\begin{itemize}
\item[$(C0)$] $\mathrm{Ric}_M \geq n \rho g_M$; 
\item[$(C1)$] $r'(z) > 0$ for all  $z \in (-\infty,a)$; and
\item[$(C2)$] $r(z) r''(z) - (1 + \alpha){r'(z)}^2 + \rho \geq c$,
\end{itemize}
where $\rho$ is a real number, $c = \max \{ 0, \rho \}$, and $\alpha > 1$. 
If $\{ F_t \}_{t\in[0,T)}$ is a mean curvature flow and satisfies the initial condition:
  \begin{align*}
  \min_{p \in M_0} \Theta_0(p) > \alpha^{-1/2}, 
  \end{align*}
  where $\Theta_0$ is the angle function defined by $\Theta_t = \langle \partial_z, N_t \rangle$ along the flow, then the following statements hold:
  \begin{itemize}
  \item[(i)] the flow preserves a graph and exists on $[0,\infty)$;
  \item[(ii)] if $R_M = 0$ and $r^{(i)}(z)/ r(z) \to 0$ as $z \to -\infty$ for $i \geq 1$, we have
 \begin{align*}
 \abs{\nabla^{m} A} \to 0 \, (t \to \infty)
 \end{align*}
 for all $m \geq 0$.
  \end{itemize}
\end{theorem}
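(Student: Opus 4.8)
I first record that $\partial_z=\overline\nabla z$, with Hessian $\overline\nabla^2z=\tfrac{r'}{r}\,(\overline g-dz\otimes dz)$. Differentiating the height function along the flow and using this identity gives
\begin{equation*}
\Big(\frac{\partial}{\partial t}-\Delta\Big)z=-\frac{r'}{r}\,(n-1+\Theta_t^2),
\end{equation*}
which is strictly negative by $(C1)$, so $\max_{M_t}z$ is non-increasing. A short computation from $(C2)$ yields $(\log r)''\ge\alpha\,((\log r)')^2$, hence $r'/r$ is positive, increasing, and tends to $0$ as $z\to-\infty$; consequently the slice mean curvature flows $M\times\{\zeta(t)\}$, with $\zeta'(t)=-n\,r'(\zeta)/r(\zeta)$, have no finite equilibrium and exist for all finite time. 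Sandwiching $M_0$ between two such slices and applying the avoidance principle, $u(\cdot,t):=z|_{M_t}$ stays in a compact subinterval of $I$ on every finite time interval, and $\max_{M_t}z\to-\infty$ as $t\to\infty$; so the flow slides off to $z=-\infty$.

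\textbf{Part (i).}
To keep the graph property I would derive the evolution equation of the angle function; from $\partial_tN=\nabla H$, the Weingarten relation and the Hessian identity for $z$ it should take the form
\begin{equation*}
\Big(\frac{\partial}{\partial t}-\Delta\Big)\Theta_t=\abs{A}^2\,\Theta_t+(\text{terms linear in }\nabla\Theta_t)+R\,\Theta_t,
\end{equation*}
where $R=R(r,r',r'',\mathrm{Ric}_M,\Theta_t)$ is the zeroth-order coefficient. At a first time and point where $\min_M\Theta$ reaches $\alpha^{-1/2}$ one has $\nabla\Theta=0$ and $\Delta\Theta\ge0$, so the linear terms drop out, and the content of $(C0)$ and $(C2)$ --- precisely the coefficient $1+\alpha$ and the constant $c=\max\{0,\rho\}$ --- is that $R\ge0$ whenever $\alpha\Theta_t^2\ge1$; together with $\abs{A}^2\Theta\ge0$ this gives $\partial_t\Theta>0$ there, contradicting that $\Theta$ just decreased to this value. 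Hence $\Theta_t>\alpha^{-1/2}$ is preserved on $[0,T)$, so every $M_t$ is a graph $\{(x,u(x,t)):x\in M\}$ over the closed manifold $M$ and $u$ solves the non-parametric graphical mean curvature equation. On a fixed $[0,T']$ the bound on $\Theta_t$ controls $\abs{\nabla u}$, and the barriers confine $u$ to a compact subinterval of $I$, so the equation is uniformly parabolic with smooth coefficients on a closed manifold; Krylov--Safonov and Schauder estimates then give uniform $C^{k,\gamma}$ bounds on $[0,T']$, and since $T'$ is arbitrary the flow exists on $[0,\infty)$.

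\textbf{Part (ii).}
The totally umbilic slice $M\times\{z_0\}$ has $A=-\tfrac{r'(z_0)}{r(z_0)}g$, hence $\abs{A}^2=n\,(r'/r)^2$, which tends to $0$ as $z_0\to-\infty$; since the flow descends to $-\infty$ and should become slice-like, one expects $\abs{A}\to0$. I would argue in three stages. First, revisit the evolution of $\Theta_t$: under the extra hypothesis $r^{(i)}/r\to0$ the $r$-dependent part of $R$ decays along the flow, which should upgrade $\Theta_t>\alpha^{-1/2}$ to $\Theta_t\to1$, i.e.\ $\operatorname{osc}_{M_t}z\to0$. Second, feed $\Theta_t\to1$ and the decay of the ambient Riemann tensor and its covariant derivatives restricted to $M_t$ --- here $R_M=0$ is essential, as it annihilates the contribution that otherwise behaves like a fiber scalar curvature and blows up like $r^{-2}$ --- into Simons' identity for $\abs{A}^2$; combined with $(C2)$ this should furnish a barrier for $\abs{A}^2$ comparable to the slice value $n\,(r'/r)^2$, whence $\max_{M_t}\abs{A}^2$ is bounded by a quantity tending to $0$. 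Third, iterate with the evolution equations for $\abs{\nabla^m A}^2$, or apply interior Shi-type estimates once $\abs{A}$ is small, to propagate the decay to all $m\ge0$.

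\textbf{Main obstacle.}
I expect the genuinely delicate step to be controlling the ambient curvature terms in Simons' identity as $z\to-\infty$: several components of the ambient Riemann tensor scale like $r^{-2}$, and they are tamed only in the combinations governed by $R_M=0$ and by $1-\Theta_t^2$, which must decay fast enough relative to $r^2$; this forces the argument to produce a quantitative rate for $\Theta_t\to1$ matched to the decay of $r$. One must also check that the $2\abs{A}^4$ term in Simons' identity does not spoil the estimate before $\abs{A}$ is known to be small; a continuity/bootstrap argument seeded by the slice-like estimate of the first stage should close this, and this coupling is the crux of Part (ii).
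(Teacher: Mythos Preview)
Your proposal has the right overall shape, but there is one genuine gap in Part~(i) and you are missing the paper's key device in Part~(ii).

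\textbf{Graph preservation.} The evolution of $\Theta$ for $n\ge 2$ is not of the form you claim. It reads (paper's Lemma~\ref{lem:theta})
\[
(\partial_t-\Delta)\Theta = 2\tfrac{r'}{r}\langle\nabla\Theta,E_z\rangle \;-\; 2\tfrac{r'}{r}H \;+\; |A|^2\Theta \;+\; n\tfrac{{r'}^2}{r^2}\Theta \;+\; \tfrac{\Theta(1-\Theta^2)}{r^2}\bigl\{n(rr''-{r'}^2)+\mathrm{Ric}_M(v_M,v_M)\bigr\},
\]
and the term $-2(r'/r)H$ is neither linear in $\nabla\Theta$ nor of the form $R\,\Theta$. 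Your argument ``$|A|^2\Theta\ge 0$, hence $\partial_t\Theta>0$'' breaks down because $|A|^2\Theta-2(r'/r)H$ has no sign. The paper fixes this by passing to $f=\Theta^2$: then $H^2\le n|A|^2$ allows a completion of squares,
\[
2|A|^2 f - 4\tfrac{r'}{r}H\sqrt{f} + 2n\tfrac{{r'}^2}{r^2}f \;\ge\; -2n\tfrac{{r'}^2}{r^2}(1-f),
\]
and after inserting (C0) and (C2) one arrives at $(\partial_t-\Delta)f \ge (\text{gradient terms}) + \tfrac{2n(1-f)}{r^2}\{{r'}^2(\alpha f-1)+cf\}$. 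The minimum principle applied to $f$ (not $\Theta$) then shows $\min f$ is non-decreasing once $\min f>1/\alpha$. Your outline is salvageable, but this square-completion is the substance of the step and is exactly why the constants $\alpha$ and $c$ enter as they do.

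\textbf{Long-time existence.} Your route via the scalar graph equation and Krylov--Safonov/Schauder is a legitimate alternative. The paper instead uses the Ecker--Huisken auxiliary function $\mathfrak g=\varphi(v)\,|A|^2$ with $v=\Theta^{-1}$ and $\varphi(v)=v^2/(1-kv^2)$, deriving $(\partial_t-\Delta)\mathfrak g\le -2k\,\mathfrak g^2 + C_1\mathfrak g^{3/2}+C_2\mathfrak g + C_3\sqrt{\mathfrak g}$ with coefficients bounded on any compact $z$-range; this bounds $|A|$ directly and forces $T=\infty$.

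\textbf{Part (ii).} Here you misidentify the obstacle and lack the main tool. Your concern about ambient curvature blowing up like $r^{-2}$ is unfounded: the only $r^{-2}$ contribution to $\overline R$ is $r^{-2}R_M$, and the hypothesis $R_M=0$ removes it, so $|\overline\nabla^i\overline R|$ is a polynomial in the $r^{(j)}/r$ with no constant term and hence tends to $0$ along the flow (paper's Lemma~\ref{lem:rR}); no quantitative rate for $\Theta\to 1$ is needed. What you \emph{do} need is a mechanism to beat the $+2|A|^4$ term, and ``continuity/bootstrap seeded by slice-like estimates'' is not one. The paper reuses the same $\mathfrak g$: since $v$ is uniformly bounded from (i), the inequality for $\mathfrak g_{\max}$ retains the fixed good term $-2k\,\mathfrak g_{\max}^2$ while every other coefficient is $o(1)$ as $z\to-\infty$, and an elementary ODE comparison forces $\mathfrak g_{\max}\to 0$, hence $|A|\to 0$, with no prior control on $\Theta\to 1$. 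The higher derivatives then follow by the standard inductive scheme $(\partial_t-\Delta)(|\nabla^m A|^2+B|\nabla^{m-1}A|^2)\le -\widetilde B(\cdot)+\widetilde P$, where $\widetilde P\to 0$ by the inductive hypothesis and the ambient decay.
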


This paper is organized as follows. 
We first compute some basic geometric quantities about warped product manifolds and show some properties related to the warping function and the Riemann curvature tensor in Section \ref{sec:basic}. 
In Section \ref{sec:evolution}, we calculate the evolution equations that play an important role in the proof of Theorem \ref{thm:1} and Theorem \ref{thm:2}. Theorem \ref{thm:1} is shown in Section \ref{sec:case1}. We will also give some remarks about the examples that do not satisfy the assumption of the theorem. In Section \ref{sec:case2}, we will prove Theorem \ref{thm:2}. 
In the appendix, we will give an example that does not remain a graph along the mean curvature flow in a warped product manifold.

%%%%%%%%%%%%%%%%%%%%%%%%%%%%%%%%%%%%%%%%%%%%%%%%%
\section{Basic formulas of warped product manifolds}
\label{sec:basic}

In this section, we give some basic geometric quantities of warped products.

First, we compute the covariant derivative with respect to $\overline{g}$.
To compute the covariant derivatives, we will use a local orthonormal frame.
Let $\{ E^M_i \}$ be a local orthonormal frame on $(M, g_M)$ and we define
\begin{align*}
    E_i=\frac{1}{r(z)} E^M_i \quad E_z= \partial_z,
\end{align*}
and $\theta^i$, $\theta^z$ denote the dual basis, then we have
$\theta^i = r \, \theta_{M}^i$, $\omega^z = d z$, where $\{ \theta_M^i \}$ is the dual basis of $\{ E^M_i \}$.

\begin{lemma}
\label{lem:nabla bar 1}
For $n=1$, we have 
\begin{align*}
    &\overline{\nabla}_{E_{\theta}} E_{\theta} = - \frac{r'}{r}  E_z, \quad \overline{\nabla}_{E_z} E_{\theta} = 0, \\
    &\overline{\nabla}_{E_{\theta}} E_z = \frac{r'}{r} \, E_{\theta}, \quad \overline{\nabla}_{E_z} E_z = 0. \\ 
\end{align*}
and Gauss curvature
\begin{align*}
\overline{K} &= \overline{\mathrm{Ric}}(\overline{v},\overline{v})=-\frac{r''}{r},
\end{align*}
where $\overline{v}$ is the unit tangent vector of $(\overline{M},\overline{g})$. The Gauss curvature $\overline{K}$ does not depend on $\theta$, so we sometimes write $\overline{K}(z)$. 
\end{lemma}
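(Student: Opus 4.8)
The plan is to compute everything from the structure equations of the orthonormal coframe $\{\theta^\theta, \theta^z\}$, where $\theta^\theta = r\,d\theta$ and $\theta^z = dz$. First I would record the exterior derivatives: $d\theta^z = 0$ and $d\theta^\theta = d(r\,d\theta) = r'(z)\,dz \wedge d\theta = (r'/r)\,\theta^z \wedge \theta^\theta$. Comparing with the first structure equation $d\theta^i = -\sum_j \omega^i{}_j \wedge \theta^j$ together with the antisymmetry $\omega^\theta{}_z = -\omega^z{}_\theta$ (metric compatibility forces the connection $1$-forms to be antisymmetric in an orthonormal frame), I would read off $\omega^\theta{}_z = (r'/r)\,\theta^\theta = r'\,d\theta$. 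From $\overline\nabla_X E_j = \sum_i \omega^i{}_j(X) E_i$ this immediately gives the four covariant derivatives: $\overline\nabla_{E_\theta}E_z = \omega^\theta{}_z(E_\theta)E_\theta = (r'/r)E_\theta$, $\overline\nabla_{E_\theta}E_\theta = \omega^z{}_\theta(E_\theta)E_z = -(r'/r)E_z$, and $\overline\nabla_{E_z}E_z = \overline\nabla_{E_z}E_\theta = 0$ since $\omega^\theta{}_z(E_z) = r'\,d\theta(\partial_z) = 0$.

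Next I would compute the curvature. The second structure equation gives the curvature $2$-form $\Omega^\theta{}_z = d\omega^\theta{}_z = d(r'\,d\theta) = r''\,dz\wedge d\theta = (r''/r)\,\theta^z\wedge\theta^\theta = -(r''/r)\,\theta^\theta\wedge\theta^z$. Reading the sectional (here Gauss) curvature off from $\Omega^\theta{}_z = -\overline K\,\theta^\theta\wedge\theta^z$ with the sign convention used in the paper yields $\overline K = -r''/r$. Since $n=1$ the Ricci tensor in the direction of the unit tangent vector $\overline v$ coincides with the Gauss curvature, so $\overline{\mathrm{Ric}}(\overline v,\overline v) = \overline K = -r''/r$, and this quantity manifestly depends only on $z$, justifying the notation $\overline K(z)$.

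Alternatively, and perhaps more transparently for the reader, I could bypass the structure equations and just use the Koszul formula (or the standard warped-product connection formulas $\overline\nabla_X Y = \nabla^M_X Y - \langle X,Y\rangle_{\text{fiber}}\,(r'/r)\,\partial_z$ and $\overline\nabla_{\partial_z}X = (r'/r)X$ for $X,Y$ tangent to the fiber) applied to the rescaled frame $E_\theta = r^{-1}E^M_\theta$; one then checks that the $r^{-1}$ factor and the derivative $\partial_z(r^{-1})$ conspire to give exactly the stated coefficients. The curvature formula then follows from the general warped-product curvature identity, or by direct computation $\overline R(E_\theta,E_z)E_z = \overline\nabla_{E_\theta}\overline\nabla_{E_z}E_z - \overline\nabla_{E_z}\overline\nabla_{E_\theta}E_z - \overline\nabla_{[E_\theta,E_z]}E_z$, where one must first compute the bracket $[E_\theta,E_z] = -(r'/r)E_\theta$ from the connection formulas.

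There is no real obstacle here; the only thing to be careful about is bookkeeping of signs and conventions — specifically the sign convention for the Riemann tensor and the placement of the $r'/r$ factors after rescaling $E^M_\theta$ to the unit frame $E_\theta$. I would double-check the curvature sign against a known case, e.g. $r(z) = \cos(\sqrt k\,z)$ should give $\overline K = -r''/r = k$, which matches the constant-curvature example in the introduction, and $r(z) = e^{\sqrt k\,z}$ should give $\overline K = -k$, likewise consistent.
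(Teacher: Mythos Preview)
Your proposal is correct and follows essentially the same route as the paper: the paper does not give a separate proof of this lemma but proves the $n\geq 2$ version (Lemma~\ref{lem:nabla bar 2}) immediately afterward by computing the Cartan connection forms $\omega^{i}{}_{z}=(r'/r)\theta^{i}$ from the first structure equation and then the curvature forms $\Omega^{i}{}_{z}=-(r''/r)\,\theta^{i}\wedge\theta^{z}$ from the second, which is exactly your primary argument specialized to the single index $i=\theta$. Your alternative via the Koszul formula and your sanity check against the constant-curvature examples are both fine additions but not needed.
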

\begin{lemma}
\label{lem:nabla bar 2}
For $n \geq 2$, we have
\begin{align*}
    &\overline{\nabla}_{E_i} E_j = \frac{1}{r^2} \nabla^{M}_{E^M_i} E^M_j - \frac{r'}{r} \delta_{i j}E_z, \quad \overline{\nabla}_{E_z} E_i = 0, \\
    &\overline{\nabla}_{E_i} E_z = \frac{r'}{r} E_i, \quad \overline{\nabla}_{E_z} E_z = 0, \\ 
\end{align*}
where $\nabla^{M}$ is a Levi-Civita connection of $(M, g_M)$, and we have Riemann curvature tensor $\overline{R}$
\begin{align*}
\overline{R}_{i j k l} &=  r^{-2} (R_M)_{i j k l} - \frac{{r'}^2}{r^2} \delta_{i k} \delta_{j l},\\
\overline{R}_{i z \alpha \beta} &= - \frac{r''}{r} \delta_{i \alpha} \delta_{z \beta},
\end{align*}
where $i < j$, $k <  l$ for $i, j ,k ,l = 1, \dots, n$ and $\alpha < \beta$ for $\alpha, \beta=1, \dots, n, z$. The others are zeros.
\end{lemma}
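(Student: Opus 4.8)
\emph{Proof plan.} The frame introduced just above — $E_i = r^{-1}E_i^M$, $E_z = \partial_z$, with coframe $\theta^i = r\,\theta_M^i$, $\theta^z = dz$ — makes Cartan's structure equations the natural tool. The plan is to read off the connection $1$-forms from the first structure equation, and then the curvature $2$-forms from the second.

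\emph{Connection forms.} Since $r = r(z)$, we have $d\theta^z = 0$, and, using $\theta_M^i = \theta^i/r$ together with the first structure equation $d\theta_M^i = -(\omega_M)^i_j\wedge\theta_M^j$ on $(M,g_M)$ pulled back to $M\times I$,
\[
d\theta^i = r'\,dz\wedge\theta_M^i + r\,d\theta_M^i = \frac{r'}{r}\,\theta^z\wedge\theta^i - (\omega_M)^i_j\wedge\theta^j.
\]
Matching this with $d\theta^a = -\omega^a_b\wedge\theta^b$ under the skew-symmetry $\omega_{ab} = -\omega_{ba}$ forces $\omega^i_j = (\omega_M)^i_j$, $\omega^i_z = \tfrac{r'}{r}\theta^i = -\omega^z_i$, and $\omega^z_z = 0$, which are the Levi-Civita connection forms of $\overline{g}$ by uniqueness of the Levi-Civita connection. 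Substituting into $\overline{\nabla}_{E_a}E_b = \omega^c_b(E_a)E_c$ and using $\theta^i(E_j) = \delta_{ij}$, $\theta^i(E_z) = 0$, $(\omega_M)^i_j(E_z) = 0$, and $(\omega_M)^i_j(E_k) = r^{-1}(\omega_M)^i_j(E_k^M)$ then produces the four stated covariant-derivative formulas; in $\overline{\nabla}_{E_i}E_j$ the horizontal part collapses to $r^{-2}\nabla^M_{E_i^M}E_j^M$ precisely because of the two factors $r^{-1}$ coming from $E_i = r^{-1}E_i^M$ and from $E_k = r^{-1}E_k^M$.

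\emph{Curvature forms.} Write $\Omega^a_b = d\omega^a_b + \omega^a_c\wedge\omega^c_b = \tfrac12\overline{R}^a_{bcd}\,\theta^c\wedge\theta^d$. For the horizontal block, $d\omega^i_j + \omega^i_k\wedge\omega^k_j$ reassembles into the curvature form $\Omega_M{}^i_j = \tfrac12(R_M)^i_{jkl}\theta_M^k\wedge\theta_M^l$ of $(M,g_M)$ (again because $d$ commutes with pullback), while $\omega^i_z\wedge\omega^z_j = -\tfrac{{r'}^2}{r^2}\theta^i\wedge\theta^j$; converting $\theta_M^k\wedge\theta_M^l = r^{-2}\theta^k\wedge\theta^l$ gives $\overline{R}_{ijkl} = r^{-2}(R_M)_{ijkl} - \tfrac{{r'}^2}{r^2}(\delta_{ik}\delta_{jl} - \delta_{il}\delta_{jk})$, which reduces to the stated formula once $i<j$, $k<l$ is imposed (the $\delta_{il}\delta_{jk}$ term then cannot contribute). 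For the mixed block, one computes $d\omega^i_z = d\!\bigl(\tfrac{r'}{r}\theta^i\bigr) = \tfrac{r''}{r}\theta^z\wedge\theta^i - \tfrac{r'}{r}(\omega_M)^i_j\wedge\theta^j$, using $\bigl(\tfrac{r'}{r}\bigr)' = \tfrac{r''}{r} - \tfrac{{r'}^2}{r^2}$ and the expression for $d\theta^i$ above, and then $\omega^i_j\wedge\omega^j_z = \tfrac{r'}{r}(\omega_M)^i_j\wedge\theta^j$ cancels the second term, leaving $\Omega^i_z = \tfrac{r''}{r}\theta^z\wedge\theta^i$, i.e. $\overline{R}_{iz\alpha\beta} = -\tfrac{r''}{r}\delta_{i\alpha}\delta_{z\beta}$. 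Finally, $\Omega^i_j$ involves no $\theta^z$ and $\Omega^i_z$ is a multiple of $\theta^z\wedge\theta^i$, so every remaining component vanishes; this is the assertion that "the others are zeros."

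The computation is routine; the one point that needs genuine care — and the closest thing to an obstacle — is keeping the warping factor $r$ consistently accounted for when relating the intrinsic data $E_i^M$, $\theta_M^i$, $(R_M)_{ijkl}$ of $(M,g_M)$ to their $\overline{g}$-orthonormal counterparts, so that $(R_M)_{ijkl}$ emerges paired with exactly $r^{-2}$ and with the right coframe. An equivalent route avoiding moving frames is to insert the connection formulas directly into $\overline{R}(X,Y)Z = \overline{\nabla}_X\overline{\nabla}_Y Z - \overline{\nabla}_Y\overline{\nabla}_X Z - \overline{\nabla}_{[X,Y]}Z$ for $X,Y,Z$ ranging over $\{E_i, E_z\}$, where the terms involving only the $E_i$ reproduce the curvature of $(M,g_M)$; I would pick whichever is more in keeping with the sign and index conventions fixed earlier in the paper.
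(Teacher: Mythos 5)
Your plan is correct and follows essentially the same route as the paper: Cartan's first structure equation to identify $\omega^i{}_j=(\omega_M)^i{}_j$ and $\omega^i{}_z=\tfrac{r'}{r}\theta^i$, then the second structure equation to obtain $\Omega^i{}_j=(\Omega_M)^i{}_j-\tfrac{{r'}^2}{r^2}\theta^i\wedge\theta^j$ and $\Omega^i{}_z=-\tfrac{r''}{r}\theta^i\wedge\theta^z$, with the covariant-derivative formulas read off from $\overline{\nabla}_X E_b=\omega^c{}_b(X)E_c$. Your bookkeeping of the factors of $r^{-1}$ and the remark about the vanishing of the $\delta_{il}\delta_{jk}$ term under the convention $i<j$, $k<l$ are both consistent with the paper's computation.
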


\begin{proof}
We compute Cartan connection forms $\omega^{i}_{j}$, $\omega^{i}_{z}$ that satisfy
\begin{align*}
    d \theta^{i} = - \sum_{j=1}^{n} \omega^{i}{}_{j} \wedge \theta^j - \omega^{i}{}_{z} \wedge \theta^z, \quad
    d \theta^{z} = - \sum_{j=1}^{n} \omega^{z}{}_{j} \wedge \theta^{j}, \quad \omega^{i}{}_{z}=-\omega^{z}{}_{i}.
\end{align*}
The exterior derivatives of $\theta^{i}$, $\theta^z$ are
\begin{align*}
    d \theta^i 
    &= r' d z \wedge \theta_{M}^i + r d \theta_{M}^i \\
    &= -\frac{r'}{r} \theta^i \wedge \theta^z - \sum_{j = 1}^{n} {(\omega_{M})}^{i}{}_{j} \wedge \theta^{j}, \\
    d \theta^z &= d d z = 0, 
\end{align*}
where ${(\omega_M)}^{i}{}_{j}$ are Cartan connection forms of $(M, g_M)$. Thus we obtain 
\begin{align*}
\omega^{i}{}_{j} = {(\omega_{M})}^{i}{}_{j}, \quad \omega^{i}{}_{z} = \frac{r'}{r} \theta^i = r' \theta_{M}^{i}.
\end{align*}
 By using $\overline{\nabla}_{X} E_i = \omega^{k}{}_{i}(X) \, E_k + \omega^{z}{}_{i}(X) E_z$,  we can obtain the first formulas. 
Then we compute the curvature form $\Omega$.
\begin{align*}
\Omega^{i}{}_{j} 
&= d \omega^{i}{}_{z} + \sum \omega^{i}{}_{k} \wedge \omega^{k}{}_{z} \\
&= d {(\omega_M)}^{i}{}_{j} + \sum {(\omega_M)}^{i}{}_{k} \wedge {(\omega_M)}^{k}{}_{j} + \frac{r'}{r} \theta^{i} \wedge -\frac{r'}{r} \theta^{j} \\
&= {(\Omega_M)}^{i}{}_{j} - \frac{{r'}^2}{r^2} \theta^i \wedge \theta^{j},\\
\Omega^{i}{}_{z} 
&= d \omega^{i}{}_{z} + \sum \omega^{i}{}_{k} \wedge \omega^{k}{}_{j} \\
&=r'' \theta^z \wedge \theta_{M}^{i} + r' d \theta_{M}^{i} + \sum {(\omega_M)}^{i}{}_{k} \wedge r' \theta_{M}^{k} \\
&= - \frac{r''}{r} \theta^i \wedge \theta^z.
\end{align*}
From $\Omega^{\gamma}{}_{\delta} = \sum_{\alpha < \beta} \overline{R}^{\gamma}{}_{\delta \alpha \beta} \theta^{\alpha} \wedge \theta^{\beta}$, ${(\Omega_M)}^{i}{}_{j} = \sum_{k < l} {(R_M)}^{i}{}_{j k l} \theta_{M}^{k} \wedge \theta_{M}^{l}$, and $\overline{R}^{\gamma}{}_{\delta \alpha \beta} = - \overline{R}^{\gamma}{}_{\delta \beta \alpha}$, we obtain
\begin{align*}
\overline{R}^{i}{}_{j k l} &= r^{-2} (R_M)^{i}{}_{j k l} - \frac{{r'}^2}{r^2} \delta^{i}{}_{k} \delta_{j l},\\
\overline{R}^{i}{}_{z \alpha \beta} &= - \frac{r''}{r} \delta^{i}{}_{\alpha} \delta_{z \beta},
\end{align*}
for $i < j$. We have $\overline{R}^{i}{}_{j \alpha \beta} = - \overline{R}^{j}{}_{i \alpha \beta}$ and the other components are zeros.
\end{proof}

\begin{lemma}
For $n \geq 2$, the Ricci curvature $\overline{\mathrm{Ric}}$ is as follows:
\begin{align*}
\overline{\mathrm{Ric}} &= \mathrm{Ric}_M - (r r'' +(n-1) {r'}^2) g_M - n \frac{r''}{r} dz \otimes dz.
\end{align*}
\end{lemma}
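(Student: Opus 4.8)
The plan is to obtain $\overline{\mathrm{Ric}}$ simply by tracing the Riemann curvature tensor already computed in Lemma~\ref{lem:nabla bar 2} over the orthonormal frame $\{E_1,\dots,E_n,E_z\}$, using $\overline{\mathrm{Ric}}(X,Y)=\sum_{a}\overline{R}(X,E_a,Y,E_a)$ with $a$ ranging over $1,\dots,n,z$. The computation naturally splits into the three cases $(X,Y)=(E_i,E_j)$, $(E_z,E_z)$ and $(E_i,E_z)$. Before contracting I would first record that the components in Lemma~\ref{lem:nabla bar 2} are written only for the index ranges $i<j$, $k<l$ (resp.\ $\alpha<\beta$), so one must extend them by the antisymmetries $\overline{R}_{abcd}=-\overline{R}_{bacd}=-\overline{R}_{abdc}$; in particular the term $-({r'}^2/r^2)\delta_{ik}\delta_{jl}$ becomes $-({r'}^2/r^2)(\delta_{ik}\delta_{jl}-\delta_{il}\delta_{jk})$ for unrestricted indices, the extra skew part being invisible under $i<j$, $k<l$.

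For the tangential block I would compute $\overline{\mathrm{Ric}}(E_i,E_j)=\sum_{k}\overline{R}_{ikjk}+\overline{R}_{izjz}$. In the first sum, $r^{-2}(R_M)_{ikjk}$ contracts to the Ricci tensor of $(M,g_M)$ in the frame $\{E^M_i\}$, while $-({r'}^2/r^2)(\delta_{ij}\delta_{kk}-\delta_{ik}\delta_{kj})$ yields $-(n-1)({r'}^2/r^2)\delta_{ij}$; the remaining term is $\overline{R}_{izjz}=-(r''/r)\delta_{ij}$, read off from the second formula of Lemma~\ref{lem:nabla bar 2} with $(\alpha,\beta)=(j,z)$. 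For the $z$-block, $\overline{\mathrm{Ric}}(E_z,E_z)=\sum_{i}\overline{R}_{iziz}=-n\,r''/r$. For the mixed block, every component $\overline{R}_{ikzk}$ occurring in the trace reduces, after using the curvature symmetries, to $\pm\overline{R}_{iz\alpha\beta}$, which by Lemma~\ref{lem:nabla bar 2} carries a Kronecker factor pairing $z$ with a Latin index and hence vanishes; together with $\overline{R}_{izzz}=0$ this gives $\overline{\mathrm{Ric}}(E_i,E_z)=0$.

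It then remains to translate these frame values into the claimed tensorial identity. Since $E_i=r^{-1}E^M_i$ and $\{E^M_i\}$ is $g_M$-orthonormal, one has $g_M(E_i,E_j)=r^{-2}\delta_{ij}$ and $\mathrm{Ric}_M(E_i,E_j)=r^{-2}\mathrm{Ric}_M(E^M_i,E^M_j)$, while $(dz\otimes dz)(E_z,E_z)=1$ and all remaining components of $g_M$, $\mathrm{Ric}_M$ and $dz\otimes dz$ in this frame are zero. Substituting the three block computations then produces exactly $\overline{\mathrm{Ric}}=\mathrm{Ric}_M-(rr''+(n-1){r'}^2)g_M-n(r''/r)\,dz\otimes dz$.

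I do not expect any genuine difficulty: the only points requiring care are the bookkeeping between the two frames $\{E_i\}$ and $\{E^M_i\}$ and the correct antisymmetric extension of the curvature components of Lemma~\ref{lem:nabla bar 2}; everything else is a routine trace. (One could alternatively quote the standard warped-product Ricci identities, but contracting the tensor already in hand is shorter and self-contained.)
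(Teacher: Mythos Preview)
Your proposal is correct and matches the paper's treatment: the paper states this lemma without proof, treating it as an immediate consequence of the Riemann tensor computed in Lemma~\ref{lem:nabla bar 2}, and your contraction over the orthonormal frame is exactly the routine verification one would supply. The bookkeeping you flag (antisymmetric extension of the $\delta_{ik}\delta_{jl}$ term and the passage between the frames $\{E_i\}$ and $\{E^M_i\}$) is handled correctly.
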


\begin{lemma}
\label{lem:rRK}
Consider the case $n = 1$ and assume $\abs{r' / r} < \infty$. 
Then the following statements are equivalent:
\begin{itemize}
\item[$(1)$] $\abs{r^{(i)} / r} < \infty$ for all $i \geq 2$; 
\item[$(2)$] $\abs{\overline{\nabla}^{i} \overline{R}} < \infty$ for all $i \geq 0$;
\item[$(3)$] $\abs{\overline{K}^{(i)}} < \infty$ for all $i \geq 0$.
\end{itemize}
\end{lemma}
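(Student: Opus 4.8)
The plan is to exploit that, because $n = 1$, the ambient manifold $\overline{M}$ is a surface, so its Riemann tensor $\overline{R}$ is determined algebraically by the Gauss curvature $\overline{K} = -r''/r$: in the orthonormal frame $\{E_\theta, E_z\}$ of Lemma~\ref{lem:nabla bar 1}, each component $\overline{R}_{\alpha\beta\gamma\delta}$ equals $0$ or $\pm\overline{K}$, and $\overline{K} = \overline{K}(z)$ does not depend on $\theta$ (so all the norms in the statement, taken over $\overline{M}$, reduce to suprema over $I$). I would prove the two equivalences $(1)\Leftrightarrow(3)$ and $(2)\Leftrightarrow(3)$ separately, in each case by exhibiting the quantities involved as universal polynomials in one another together with the auxiliary function $u := r'/r$, whose boundedness is the standing hypothesis of the lemma.

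For $(1)\Leftrightarrow(3)$, first I would record the two identities $u' = r''/r - u^2 = -\overline{K} - u^2$ and $(r^{(i)}/r)' = r^{(i+1)}/r - (r^{(i)}/r)\,u$. Starting from $r''/r = -\overline{K}$ and iterating, an induction on $i$ shows that $r^{(i+2)}/r$ is a polynomial (with integer coefficients) in $\overline{K},\overline{K}',\dots,\overline{K}^{(i)}$ and $u$; reading the same recursion the other way expresses $\overline{K}^{(i)}$ as a polynomial in $r''/r, r'''/r, \dots, r^{(i+2)}/r$ and $u$. Since $\abs{u} < \infty$ by hypothesis, a further induction then gives both $(1)\Rightarrow(3)$ and $(3)\Rightarrow(1)$, because a polynomial in bounded functions is bounded.

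For $(2)\Leftrightarrow(3)$, I would start from Lemma~\ref{lem:nabla bar 1}, which tells us that the only nonzero covariant derivatives of the frame fields are $\overline{\nabla}_{E_\theta}E_\theta = -u\,E_z$ and $\overline{\nabla}_{E_\theta}E_z = u\,E_\theta$, while $\overline{\nabla}_{E_z}E_\theta = \overline{\nabla}_{E_z}E_z = 0$, and that $E_\theta$ annihilates any function of $z$ whereas $E_z(\overline{K}) = \overline{K}'$. Passing from a component of $\overline{\nabla}^i\overline{R}$ to one of $\overline{\nabla}^{i+1}\overline{R}$ in this frame then only ever introduces one further $z$-derivative of $\overline{K}$ (when the new derivative is in the $E_z$ direction) or one extra factor of $u$ from a connection term (when it is in the $E_\theta$ direction); hence by induction every component of $\overline{\nabla}^i\overline{R}$ is a polynomial in $\overline{K},\dots,\overline{K}^{(i)}$ and $u$, and with $\abs{u} < \infty$ this gives $(3)\Rightarrow(2)$. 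For the converse, since $\overline{\nabla}_{E_z}$ kills the entire frame there are no correction terms upon differentiating $\overline{R}$ repeatedly in the $E_z$ direction, so $\bigl(\overline{\nabla}^i_{E_z,\dots,E_z}\overline{R}\bigr)(E_\theta,E_z,E_\theta,E_z) = \pm\overline{K}^{(i)}$; as the frame is orthonormal, $\abs{\overline{K}^{(i)}} \le \abs{\overline{\nabla}^i\overline{R}}$, which is $(2)\Rightarrow(3)$.

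I do not expect a genuine obstacle: the whole content consists of the two inductions plus the covariant-derivative bookkeeping, none of which is delicate. The one point worth stressing is that the hypothesis $\abs{r'/r}<\infty$ is used essentially --- in every implication except $(2)\Rightarrow(3)$ --- since it is precisely what makes the dictionary between $r$-derivatives, $z$-derivatives of $\overline{K}$, and the norms $\abs{\overline{\nabla}^i\overline{R}}$ polynomial (in the extra variable $u = r'/r$) rather than merely rational.
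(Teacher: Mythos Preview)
Your proposal is correct and follows essentially the same approach as the paper: the paper's proof is terse, simply writing down $\overline{R}(E_{\theta},E_z,E_z,E_{\theta}) = \overline{K} = -r''/r$ and the first covariant derivative $(\overline{\nabla}_{E_z}\overline{R})(E_{\theta},E_z,E_z,E_{\theta}) = \overline{K}' = -r'''/r + (r''/r)(r'/r)$, and then asserting that the pattern proves the lemma. Your two inductions make explicit exactly what the paper leaves implicit --- in particular, your observation that $\overline{\nabla}_{E_z}$ annihilates the frame so that $(\overline{\nabla}^i_{E_z,\dots,E_z}\overline{R})(E_{\theta},E_z,E_{\theta},E_z) = \pm\overline{K}^{(i)}$ is precisely the mechanism behind the paper's computation.
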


\begin{proof}
From Lemma \ref{lem:nabla bar 1}, we have
\begin{align*}
\overline{R}(E_{\theta}, E_{z}, E_{z}, E_{\theta}) = \overline{K} = - \frac{r''}{r},
\end{align*}
then we can compute all the derivatives, for example;
\begin{align*}
( \overline{\nabla}_{E_z} \overline{R}) (E_{\theta}, E_{z}, E_{z}, E_{\theta}) = \overline{K}' = - \frac{r'''}{r} + \frac{r''}{r} \frac{r'}{r}.
\end{align*}
This proves the lemma.
\end{proof}

\begin{lemma}
\label{lem:rR}
Let $r \colon (-\infty, a) \to (0,\infty)$ and $n \geq 2$.
If $R_M = 0$, the following two statements are equivalent:
\begin{itemize}
\item[$(1)$] $\abs{r^{(i)} / r} \to 0$ as $z \to -\infty$ for all $i \geq 1$; 
\item[$(2)$] $\abs{\overline{\nabla}^{i} \overline{R}} \to 0$ as $z \to -\infty$ for all $i \geq 0$.
\end{itemize}
\end{lemma}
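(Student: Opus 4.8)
The plan is to leverage the explicit formulas of Lemma~\ref{lem:nabla bar 2}. When $R_M = 0$, the only nonzero components of $\overline{R}$ in the frame $\{E_i, E_z\}$ are $\overline{R}_{ijij} = -(r'/r)^2$ and $\overline{R}_{iziz} = -r''/r$, so $\overline{R}$ lies in the algebra generated by the logarithmic derivatives $r^{(j)}/r$. The enabling reduction is that, since $R_M = 0$, around any point of $M$ there is a chart in which $g_M$ is Euclidean and $\{E^M_i\}$ is a parallel frame, i.e.\ $(\omega_M)^i{}_j \equiv 0$. In such a chart the connection forms of $\overline{g}$ are just $\omega^i{}_z = (r'/r)\theta^i$, $\omega^i{}_j = 0$ (cf.\ the proof of Lemma~\ref{lem:nabla bar 2}); in particular $\overline{\nabla}_{E_z}$ annihilates the frame, while $\overline{\nabla}_{E_i}E_j = -(r'/r)\delta_{ij}E_z$ and $\overline{\nabla}_{E_i}E_z = (r'/r)E_i$. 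Hence $\overline{\nabla}_{E_z}$ acts on frame components simply as $\partial_z$, and the only $M$-directional contributions come from multiplying a component by $\pm r'/r$.

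I would then prove by induction on $m$ the structural claim: every frame component of $\overline{\nabla}^m \overline{R}$ is a universal polynomial $P(r'/r, \dots, r^{(m+2)}/r)$ with no constant term (all monomials of degree $\geq 1$), depending only on $m$ and the index configuration; in particular the components, and thus $\abs{\overline{\nabla}^m\overline{R}}$, depend only on $z$. The inductive step splits into the $E_z$-derivative (apply $\partial_z$, using $\partial_z(r^{(j)}/r) = r^{(j+1)}/r - (r^{(j)}/r)(r'/r)$, which stays in the algebra and creates no constant term) and an $E_i$-derivative (the $z$-dependent component functions are killed, leaving connection terms each equal to $\pm(r'/r)$ times a previous component). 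Granting this, $(1) \Rightarrow (2)$ is immediate: for fixed $m$, $\abs{\overline{\nabla}^m\overline{R}}^2$ is a finite sum of squares of constant-term-free polynomials in the finitely many quantities $r'/r, \dots, r^{(m+2)}/r$, each of which $\to 0$ by $(1)$; every monomial is a bounded-length product of such factors and hence $\to 0$.

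For $(2) \Rightarrow (1)$ I would recover the logarithmic derivatives one at a time, by induction on $j \geq 1$. The cases $j = 1, 2$ come from $m = 0$: since $n \geq 2$, both curvature components occur, so $\abs{\overline{R}}^2 = a_n (r'/r)^4 + b_n (r''/r)^2$ with $a_n, b_n > 0$; as a sum of nonnegative terms tending to $0$, both $r'/r \to 0$ and $r''/r \to 0$. (This is precisely where $n \geq 2$ is used; for $n = 1$, $\overline{R}$ sees only $r''/r = -\overline{K}$, which is why Lemma~\ref{lem:rRK} must assume $\abs{r'/r} < \infty$ separately.) For the inductive step, assume $r^{(j)}/r \to 0$ for $1 \leq j \leq m+1$ with $m \geq 1$. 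Applying $\overline{\nabla}_{E_z}$ $m$ times to $\overline{R}$ and evaluating on $(E_i, E_z, E_i, E_z)$ yields the component $\partial_z^m \overline{R}_{iziz} = -\partial_z^m(r''/r)$ of $\overline{\nabla}^m\overline{R}$; a short auxiliary induction gives $\partial_z^m(r''/r) = r^{(m+2)}/r + Q_m$, where $Q_m$ is a polynomial in $r'/r, \dots, r^{(m+1)}/r$ with all monomials of degree $\geq 2$. Since this component's square is one of the nonnegative summands of $\abs{\overline{\nabla}^m\overline{R}}^2$, we get $\abs{r^{(m+2)}/r + Q_m} \leq \abs{\overline{\nabla}^m\overline{R}} \to 0$ by $(2)$, while $Q_m \to 0$ by the inductive hypothesis; hence $r^{(m+2)}/r \to 0$, closing the induction and proving $(1)$.

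The two inductions (the structural claim and the expansion $\partial_z^m(r''/r) = r^{(m+2)}/r + Q_m$) are the routine part; they are easiest to package using that $\overline{\nabla}_{E_z}$ has trivial connection coefficients in the parallel frame, and one may alternatively set $\phi = \log r$ and use that $r^{(k)}/r$ is the complete Bell polynomial in $\phi', \dots, \phi^{(k)}$, a triangular change of variables. The only genuine subtlety — as opposed to a computation — is the reduction to a flat chart at the outset: this is what eliminates the $(\omega_M)$-terms and is exactly where $R_M = 0$ enters, since otherwise $\overline{R}_{ijkl}$ carries the extra summand $r^{-2}(R_M)_{ijkl}$, which is not a polynomial in the $r^{(j)}/r$ and wrecks both directions. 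I expect that to be the main point a careful reader will want spelled out.
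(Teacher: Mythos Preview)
Your proposal is correct and follows essentially the same approach as the paper: use $R_M = 0$ to pass to a parallel frame on $M$, observe that every frame component of $\overline{\nabla}^m \overline{R}$ is a constant-term-free polynomial in $r'/r,\dots,r^{(m+2)}/r$, and deduce both implications from this structure. Your write-up is in fact more complete than the paper's sketch, which stops at ``$\abs{\overline{\nabla}^{i} \overline{R}}^2 = P(r'/r,\dots,r^{(i+2)}/r)$ with $P$ constant-term-free'' and leaves the reader to fill in the $(2)\Rightarrow(1)$ extraction that you carry out via $(\overline{\nabla}_{E_z})^m \overline{R}_{iziz}$ and the role of $n\geq 2$ in the base case.
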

\begin{proof}
Since $R_M = 0$, we can take $\{E^M_i\}$ that satisfies $\nabla^{M}_{E^M_i} E^M_j = 0$.
From Lemma \ref{lem:nabla bar 2}, we have
\begin{align*}
\overline{R}_{i j i j } &=  - \frac{{r'}^2}{r^2} \delta_{i i} \delta_{j j} = - \frac{{r'}^2}{r^2},\\
\overline{R}_{i z i z} &= - \frac{r''}{r} \delta_{i i} \delta_{z z} = - \frac{r''}{r} ,
\end{align*}
thus as in the proof of Lemma \ref{lem:rRK}, we have for the higher-order derivatives of curvature tensor,
\begin{align*}
\abs{\overline{\nabla}^{i} \overline{R}}^2 = P \left(\frac{r'}{r}, \frac{r''}{r}, \dots, \frac{r^{(i+2)}}{r} \right),
\end{align*}
where P is a polynomial that has no constant term.
Using this, we can prove the lemma.
\end{proof}

\begin{lemma}
\label{lem:curv-dist}
If $I=(-\infty, a)$, $r'(z) > 0$ and $r(z) r''(z) - (1 + \alpha) {r'}(z)^2 \geq 0$ for $\alpha > 0$, then we have
\begin{align*}
\frac{r'}{r}(z_2) - \frac{r'}{r}(z_1) \leq - \alpha (z_1 - z_2) \frac{r'}{r}(z_1) \frac{r'}{r}(z_2),
\end{align*}
for $-\infty < z_2 < z_1 < a$. 
In particular, $r'/r$ is monotone and goes to zero as $z \to -\infty$.
\end{lemma}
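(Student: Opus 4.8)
The plan is to reduce the whole statement to an elementary first-order differential inequality for the logarithmic derivative $u := r'/r = (\log r)'$. Since $r > 0$ and $r' > 0$ on $I$, we have $u > 0$ throughout. A direct computation gives
\[
u' = \frac{r''}{r} - \left(\frac{r'}{r}\right)^2 = \frac{r r'' - (r')^2}{r^2},
\]
so the hypothesis $r r'' - (1+\alpha)(r')^2 \geq 0$ is equivalent to $u' \geq \alpha u^2$ on $I$. In particular $u' > 0$, which already proves that $r'/r$ is monotone increasing.

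Next I would integrate this inequality after dividing by $u^2 > 0$: it says $(1/u)' = -u'/u^2 \leq -\alpha$, hence for any $-\infty < z_2 < z_1 < a$,
\[
\frac{1}{u(z_1)} - \frac{1}{u(z_2)} = \int_{z_2}^{z_1} \left(\frac{1}{u}\right)'(z)\, dz \leq -\alpha\,(z_1 - z_2).
\]
Multiplying through by $u(z_1)u(z_2) > 0$ yields $u(z_2) - u(z_1) \leq -\alpha\,(z_1 - z_2)\,u(z_1)u(z_2)$, which is precisely the claimed inequality after substituting $u = r'/r$.

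For the limiting assertion, note that $u$ is positive and increasing, so $L := \lim_{z\to-\infty} u(z) = \inf_I u$ exists in $[0,\infty)$. If $L > 0$, then fixing $z_1$ and letting $z_2 \to -\infty$ in the displayed estimate forces the left-hand side to converge to the finite number $1/u(z_1) - 1/L$, while the right-hand side $-\alpha(z_1 - z_2) \to -\infty$; this contradiction gives $L = 0$, i.e. $r'/r \to 0$ as $z \to -\infty$.

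I do not expect a real obstacle here: the argument is just an ODE comparison. The only points requiring care are the sign bookkeeping (we integrate over an interval extending to $-\infty$, with $z_1 - z_2 > 0$) and the observation that the coefficient $(1+\alpha)$ in the hypothesis was chosen exactly so that $r r'' - (r')^2 \geq \alpha (r')^2$, producing the factor $\alpha$ in front of $u^2$.
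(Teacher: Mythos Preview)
Your proof is correct and follows essentially the same route as the paper: set $u = r'/r$, derive $u' \geq \alpha u^2$ from the hypothesis, integrate $(1/u)' \leq -\alpha$ to obtain $\tfrac{1}{u(z_1)} - \tfrac{1}{u(z_2)} \leq -\alpha(z_1 - z_2)$, and then read off both the claimed inequality and the limit $u \to 0$. Your version is slightly more explicit in spelling out the multiplication by $u(z_1)u(z_2)$ and the contradiction argument for the limit, but there is no substantive difference.
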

\begin{proof}
Set $w(z) = r'(z)/r(z)$ and we have 
\begin{align*}
\frac{d w}{d z}(z) = \frac{r(z) r''(z) - {r'(z)}^2}{r(z)^2} \geq \alpha \frac{{r'(z)}^2}{r(z)^2} =  \alpha w(z)^2,
\end{align*}
thus $w(z)$ is monotone. Then we divide it by $- w(z)^2$ and integrate it from $z_2$ to $z_1$ $(z_2 < z_1)$ to obtain
\begin{align*}
\frac{1}{w(z_1)} - \frac{1}{w(z_2)} \leq  -\alpha(z_1 - z_2).
\end{align*}
The right-hand side goes to $-\infty$ as $z_2 \to -\infty$, so $w(z)$ goes to zero.
\end{proof}

%%%%%%%%%%%%%%%%%%%%%%%%%%%%%%%%%%%%%%%%%%%%%%%%%%%%%%%%%
\section{The evolution equations}
\label{sec:evolution}

We shall calculate the evolution equations in this section.
Let $F_t \colon M \to \overline{M}$, $t \in [0,T)$ be a smooth family of immersions that satisfies equation (\ref{eq:MCF}).
The Levi-Civita connection induced in $M$ by $F_t$ is denoted by $\nabla$
and $\overline{\nabla}^F$ denotes the connection of $\overline{\nabla}$ induced by $F$.
We then define an angle function on $M$ as already mentioned in Section \ref{sec:Introduction},
\begin{align}
\label{eq:functions}
   \Theta_t = \langle N_t, E_z \rangle,
\end{align}
and we define a height function $z_t \colon M \to I$ by
\begin{align*}
z_t = \pi_z \circ F_t,
\end{align*}
where the projection from $\overline{M}$ to $I$ is denoted by $\pi_z$.
First, we compute the evolution equation for the angle function. 
Let us define $v_M$ by
\begin{align*}
v_M = \frac{N_M}{\abs{N_M}}_M, \quad N_M = \sum_{k=1}^{n} \langle N, E_k \rangle E_k,
\end{align*}
if $\abs{N_M} = 0$, we define $v_M = 0$. The vector $N_M$ is the tangential component of the unit normal vector $N$ to $M$.
\begin{lemma}
\label{lem:theta}
The evolution equations for $\Theta$ are as follows:
for $n=1$,
\begin{align*}
    \left(\partial_t - \Delta\right) \Theta= \frac{r r'' - 2 {r'}^2}{r^2} \Theta (1 - \Theta^2) + \left( \frac{r'}{r} \Theta - \kappa \right)^2 \Theta,
\end{align*}
for $n \geq 2$,
\begin{align*}
    \left(\partial_t - \Delta\right) \Theta
    &= 2 \frac{r'}{r}\{ \langle \nabla \Theta, E_z \rangle - H \} + \abs{A}^2 \Theta \\
    & \quad + n \frac{{r'}^2}{r^2} \Theta + \frac{\Theta (1 - \Theta^2)}{r^2} \{ n (r r'' - {r'}^2) + \mathrm{Ric}_M(v_M, v_M) \},
\end{align*}
\end{lemma}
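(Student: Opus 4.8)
The plan is to use the standard machinery for evolving a geometric quantity along the mean curvature flow, combined with the explicit Christoffel/curvature identities of Lemmas 2.1 and 2.2. Recall that $\Theta = \langle N, E_z\rangle$, and that along \eqref{eq:MCF} the unit normal evolves by $\partial_t N = \nabla H$ (gradient on $M$), while the second fundamental form satisfies the Simons-type identity $(\partial_t - \Delta)A = |A|^2 A + (\text{ambient curvature terms})$. Since $E_z = \partial_z$ is a globally defined vector field on $\overline M$ (not the position vector), I would first record how $\overline\nabla E_z$ behaves: from Lemma 2.1, $\overline\nabla_{E_\theta}E_z = (r'/r)E_\theta$ and $\overline\nabla_{E_z}E_z = 0$, and from Lemma 2.2 the analogous $\overline\nabla_{E_i}E_z = (r'/r)E_i$, $\overline\nabla_{E_z}E_z = 0$. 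In other words $E_z$ is a conformal-type field with $\overline\nabla_X E_z = (r'/r)(X - \langle X, E_z\rangle E_z)$ for all $X$; this one formula drives every computation.

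The key steps, in order. First, compute $\partial_t \Theta = \langle \partial_t N, E_z\rangle + \langle N, \overline\nabla^F_{\partial_t F} E_z\rangle$. The first term is $\langle \nabla H, E_z\rangle$; the second, using $\partial_t F = -HN$ and the displayed formula for $\overline\nabla E_z$, contributes $-H(r'/r)(1-\Theta^2)$ — wait, more precisely $\langle N, \overline\nabla_{-HN}E_z\rangle = -H(r'/r)\langle N, N - \Theta E_z\rangle = -H(r'/r)(1-\Theta^2)$. Second, compute $\Delta\Theta$. Writing $\nabla_{e_a}\Theta = \langle \overline\nabla^F_{e_a}N, E_z\rangle + \langle N, \overline\nabla^F_{e_a}E_z\rangle$ for an orthonormal tangent frame $\{e_a\}$, the first piece is $-\langle A(e_a,e_b)e_b^\sharp, E_z\rangle$-type (Weingarten), the second is $(r'/r)\langle N, e_a^{\top\perp}\rangle$; differentiating again and tracing, one picks up $|A|^2\Theta$ from the Simons term, a Codazzi/$\nabla H$ cross term, the ambient curvature $\overline R(\cdot, E_z, \cdot, N)$ which by Lemma 2.2 produces the $(r r'' - {r'}^2)$ and $\mathrm{Ric}_M(v_M,v_M)$ pieces, plus lower-order $r'/r$ gradient terms. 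Third, subtract: the $\langle\nabla H, E_z\rangle$ and $\langle\nabla\Theta, \cdot\rangle$ terms are arranged to combine into $2(r'/r)\{\langle\nabla\Theta, E_z\rangle - H\}$ in the $n\ge 2$ case, and into the perfect square $(\frac{r'}{r}\Theta - \kappa)^2\Theta$ in the $n=1$ case (where $A$ is just the scalar curvature $\kappa$, $|A|^2 = \kappa^2$, and $\langle\nabla\Theta,E_z\rangle$ can be expressed via $\kappa$ and $\Theta$ since in the curve case the tangent and normal span everything). Finally, assemble the ambient-curvature contribution: for $n=1$ it is $\overline K\,\Theta(1-\Theta^2) = -\frac{r''}{r}\Theta(1-\Theta^2)$, which when combined with a $\frac{{r'}^2}{r^2}$ leftover from the $\overline\nabla E_z$ terms yields $\frac{rr''-2{r'}^2}{r^2}\Theta(1-\Theta^2)$; for $n\ge 2$ the bookkeeping is the stated combination $n\frac{{r'}^2}{r^2}\Theta + \frac{\Theta(1-\Theta^2)}{r^2}\{n(rr''-{r'}^2) + \mathrm{Ric}_M(v_M,v_M)\}$.

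I expect the main obstacle to be the curvature bookkeeping in the $n\ge 2$ case: one must carefully decompose the ambient Riemann tensor $\overline R(\,\cdot\,, E_z, \,\cdot\,, N)$ contracted against the tangent frame, separating the $M$-horizontal directions (which see $\mathrm{Ric}_M$ and the $-{r'}^2/r^2$ term) from the $E_z$-direction (which sees $-r''/r$), and to track how the factor $\Theta$ versus $(1-\Theta^2)$ distributes — the tangential projection of $E_z$ has length $\sqrt{1-\Theta^2}$ and "points along $v_M$" up to the sign issues handled by the $v_M = 0$ convention. A secondary subtlety is justifying the reduction of $\langle\nabla\Theta, E_z\rangle$-type terms in the $n=1$ case into the clean square; this uses that for a graph the frame rotation is governed by a single angle, so that $\overline\nabla^F_{e_1}N$ and $\overline\nabla^F_{e_1}E_z$ are both proportional to $e_1$ with coefficients $-\kappa$ and $(r'/r)\Theta$ respectively, whence $\nabla\Theta$ is controlled and the cross terms square up. Everything else is a routine, if lengthy, application of the moving-frame formulas already established.
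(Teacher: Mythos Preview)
Your outline is correct and would yield the stated formulas. The key observation you isolate --- that $\overline\nabla_X E_z = (r'/r)\bigl(X - \langle X,E_z\rangle E_z\bigr)$ for all $X$ --- is exactly what drives the computation, and your bookkeeping of how $|A|^2\Theta$, the $\overline R(\,\cdot\,,E_z,\,\cdot\,,N)$ trace, and the $r'/r$ gradient terms assemble is accurate. (One small terminological point: the $|A|^2\Theta$ term comes from the Weingarten relation $\overline\nabla_{e_a}N = -A e_a$ applied twice, i.e.\ from a Bochner-type identity for $\Delta\langle N, V\rangle$, not from the Simons identity for $(\partial_t-\Delta)A$; but this does not affect your argument.)

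The paper's own treatment differs in presentation rather than in substance. For $n\ge 2$ the paper simply cites \cite{huang2019}, so your sketch is in fact more explicit than what appears here. For $n=1$ the paper does \emph{not} invoke the general Bochner/Weingarten machinery you describe; instead it works entirely by hand in the rotating orthonormal frame $\{E_\theta,E_z\}$, writing $\mathfrak t$ and $N$ as linear combinations of $E_\theta,E_z$ with coefficients $\Theta$ and $\langle N,E_\theta\rangle$, and then computing $\partial_s\Theta$, $\partial_s\langle N,E_\theta\rangle$, $\Delta\Theta=\partial_s^2\Theta$, and $\partial_t\Theta$ one line at a time. This direct route makes the emergence of the perfect square $\bigl(\tfrac{r'}{r}\Theta-\kappa\bigr)^2\Theta$ completely transparent --- it appears already in the factor $\partial_s\Theta = \bigl(\tfrac{r'}{r}\Theta-\kappa\bigr)\langle N,E_\theta\rangle$ --- whereas in your approach it must be reconstructed at the end from the $|A|^2\Theta$, the cross term, and the $(r'/r)^2$ leftover. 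Both arguments are the same computation; the paper's is more elementary and self-contained for $n=1$, yours is more uniform across dimensions.
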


\begin{proof}
For $n\geq 2$, see the proofs of Theorem 3.1 and related propositions in \cite{huang2019}. 
We only compute the case $n=1$. 
We have for the arc length parameter $s$, $\partial_s = \partial_x / \abs{\partial_x}$ and $\mathfrak{t} = d F(\partial_s)$, where $x$ is a coordinate of $I$. 
Using the frame $\{ E_{\theta}, E_z \}$ we can express $\mathfrak{t}$, $N$ as follows:
\begin{align*}
\mathfrak{t} &= \langle N, E_z \rangle E_{\theta} - \langle N, E_{\theta} \rangle E_z, \\
N &= \langle N, E_{\theta} \rangle E_{\theta} + \langle N, E_z \rangle E_z.
\end{align*}
We then have
\begin{align*}
\partial_s \Theta 
&= \langle \overline{\nabla}^{F}_{\partial_s} E_z, N \rangle + \langle E_z, \overline{\nabla}^{F}_{\partial_s} N \rangle \\
&= \left\langle \frac{r'}{r} \Theta E_{\theta}, N \right\rangle + \langle E_z, \kappa \mathfrak{t} \rangle \\
&= \left( \frac{r'}{r} \Theta - \kappa \right) \langle N, E_{\theta} \rangle,
\end{align*}
and we also have 
\begin{align*}
\partial_s \langle N, E_{\theta} \rangle = - \left( \frac{r'}{r} \Theta - \kappa \right) \Theta.
\end{align*}
Hence we obtain 
\begin{align*}
\Delta \Theta 
&= \partial_s \partial_s \Theta \\
&= \partial_s \left\{ \left( \frac{r'}{r} \Theta - \kappa \right) \langle N, E_{\theta} \rangle \right\} \\
&= \partial_s \left( \frac{r'}{r} \Theta - \kappa \right)  \langle N, E_{\theta} \rangle + \left( \frac{r'}{r} \Theta - \kappa \right) \partial_s \langle N, E_{\theta} \rangle \\
&= - \left( \frac{r'}{r} \right)' \Theta (1- \Theta^2) + \frac{r'}{r} \left( \frac{r'}{r} \Theta - \kappa \right)(1 - \Theta^2)
- \partial_s \kappa \langle N, E_{\theta} \rangle - \left( \frac{r'}{r} \Theta - \kappa \right)^2 \Theta,
\end{align*}
where we use $\partial_s z = d \pi_z (\mathfrak{t}) = - \langle N, E_{\theta} \rangle$ and $\langle N, E_{\theta} \rangle^2 = 1 - \Theta^2$.
The time derivative of $\Theta$ is as follows:
\begin{align*}
\partial_t \Theta 
&= \langle \overline{\nabla}^{F}_{\partial_t} E_z, N \rangle + \langle E_z, \overline{\nabla}^{F}_{\partial_t} N \rangle \\
&=  \langle \overline{\nabla}^{F}_{- \kappa N} E_z, N \rangle + \langle E_z, d F (\mathrm{grad}\kappa) \rangle \\
&= - \kappa \langle N, E_{\theta} \rangle \langle \overline{\nabla}_{E_{\theta}} E_z, N \rangle + \langle E_z, \partial_s \kappa \mathfrak{t} \rangle \\
&= - \kappa \langle N, E_{\theta} \rangle \left\langle \frac{r'}{r} E_{\theta}, N \right\rangle + \partial_s \kappa \langle E_z, N \rangle \\
&= - \kappa \frac{r'}{r} (1 - \Theta^2) - \partial_s \kappa \langle N, E_{\theta} \rangle.
\end{align*}
Thus we obtain 
\begin{align*}
(\partial_t - \Delta) \Theta
&= - \kappa \frac{r'}{r} (1 - \Theta^2) - \partial_s \kappa \langle N, E_{\theta} \rangle \\
 &+ \partial_s \kappa \langle N, E_{\theta} \rangle + \frac{r r'' - {r'}^2}{r^2} \Theta (1 - \Theta^2) 
 - \frac{r'}{r} \left( \frac{r'}{r} \Theta - \kappa \right)(1-\Theta^2) + \left( \frac{r'}{r} \Theta - \kappa \right)^2 \Theta \\
& = \frac{r r'' - 2 {r'}^2}{r^2} \Theta (1 - \Theta^2) +  \left( \frac{r'}{r} \Theta - \kappa \right)^2 \Theta.
\end{align*}
\end{proof}

\begin{lemma}
\label{lem:v}
Let $v = \Theta^{-1}$, then we have
for $n=1$,
\begin{align*}
    \left(\partial_t - \Delta\right) v= -\frac{2}{v} \abs{\partial_s v}^2 -\frac{r r'' - 2 {r'}^2}{r^2}  \left( v - \frac{1}{v} \right) - \left( \frac{r'}{r} \Theta - \kappa \right)^2 v,
\end{align*}
for $n \geq 2$,
\begin{align*}
    \left(\partial_t - \Delta\right) v
    &= -\frac{2}{v} \abs{\nabla v}^2 +2 \frac{r'}{r} \langle \nabla v, E_z \rangle +2 \frac{r'}{r} H v^2 - \abs{A}^2 v \\
    & \quad - n \frac{{r'}^2}{r^2} v - \frac{1}{r^2} \{ n (r r'' - {r'}^2) + \mathrm{Ric}_M(v_M, v_M) \} \left( v - \frac{1}{v} \right).
\end{align*}
\end{lemma}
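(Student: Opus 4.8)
The plan is to derive both identities directly from Lemma~\ref{lem:theta} by the chain rule, since $v = \Theta^{-1}$ is a pointwise smooth function of $\Theta$ (and $\Theta > 0$ wherever we use it, so $v$ is well defined and smooth there). First I would record the general operator identity: for a smooth real function $g$ and a time-dependent function $u$ on $M$,
\begin{align*}
(\partial_t - \Delta)\, g(u) = g'(u)\,(\partial_t - \Delta) u - g''(u)\,\abs{\nabla u}^2 ,
\end{align*}
which follows from $\partial_t g(u) = g'(u)\partial_t u$ and $\Delta g(u) = g'(u)\Delta u + g''(u)\abs{\nabla u}^2$. Applying this with $u = \Theta$ and $g(x) = 1/x$, so $g'(x) = -x^{-2}$ and $g''(x) = 2x^{-3}$, gives
\begin{align*}
(\partial_t - \Delta) v = -\Theta^{-2}(\partial_t - \Delta)\Theta - 2\,\Theta^{-3}\abs{\nabla \Theta}^2 .
\end{align*}

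Next I would rewrite the gradient term in terms of $v$ itself: from $\nabla v = -\Theta^{-2}\nabla\Theta$ we get $\nabla\Theta = -\Theta^2\nabla v$, hence $\abs{\nabla\Theta}^2 = \Theta^4\abs{\nabla v}^2$ and therefore $-2\Theta^{-3}\abs{\nabla\Theta}^2 = -2\Theta\abs{\nabla v}^2 = -\tfrac{2}{v}\abs{\nabla v}^2$, which is the first term on the right-hand side in each case (with $\nabla$ replaced by $\partial_s$ when $n=1$, where $\Delta = \partial_s^2$). For $n \geq 2$ I would likewise use $\langle\nabla\Theta, E_z\rangle = -\Theta^2\langle\nabla v, E_z\rangle$ to convert the first-order term $2\tfrac{r'}{r}\langle\nabla\Theta, E_z\rangle$ of Lemma~\ref{lem:theta} into $2\tfrac{r'}{r}\langle\nabla v, E_z\rangle$.

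It then remains to substitute the expression for $(\partial_t - \Delta)\Theta$ from Lemma~\ref{lem:theta}, multiply through by $-\Theta^{-2}$, and collect terms. The one simplification worth isolating is the repeated identity
\begin{align*}
\Theta^{-2}\cdot\Theta\,(1-\Theta^2) = \Theta^{-1} - \Theta = v - \frac{1}{v},
\end{align*}
which turns each curvature-type term proportional to $\Theta(1-\Theta^2)$ in Lemma~\ref{lem:theta} into a term proportional to $v - 1/v$; the terms proportional to $\Theta$ (namely $\abs{A}^2\Theta$, $n\tfrac{{r'}^2}{r^2}\Theta$, and, for $n=1$, $(\tfrac{r'}{r}\Theta - \kappa)^2\Theta$) become terms proportional to $v$, and the $-H$ term becomes $+2\tfrac{r'}{r}Hv^2$ since $-\Theta^{-2}(-H) = Hv^2$. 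Assembling these pieces yields the two stated formulas.

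I do not expect a genuine analytic obstacle here: the computation is routine, and the only thing requiring care is the bookkeeping—tracking which terms pick up a factor $v$, which pick up $v - 1/v$, and which pick up $v^2$. The reason for recording the lemma is conceptual rather than technical: passing from $\Theta$ to $v$ flips the sign of the gradient term, so $v$ satisfies a reaction--diffusion equation with the \emph{favorable} absorbing term $-\tfrac{2}{v}\abs{\nabla v}^2$, which is precisely what makes $v$ the convenient quantity for the maximum-principle arguments used later to preserve the graph property and to control the angle function along the flow.
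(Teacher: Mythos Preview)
Your proposal is correct and is exactly the intended argument: the paper states Lemma~\ref{lem:v} without proof, precisely because it is the immediate chain-rule consequence of Lemma~\ref{lem:theta} that you have written out. Your bookkeeping of which terms acquire a factor $v$, $v^2$, or $v - 1/v$ is accurate in both cases, and the conversion $\langle \nabla \Theta, E_z\rangle = -\Theta^2 \langle \nabla v, E_z\rangle$ is the only point requiring a moment's care.
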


The evolution equations for the curvatures are given as in \cite{huisken1986contracting}.
\begin{lemma}
\label{lem:curvatures}
We have the evolution equations for the curvature;
for $n = 1$,
\begin{align*}
(\partial_t - \Delta) \kappa^2 &= -2 \abs{\partial_s \kappa}^2 + 2 \kappa^2 \left( \kappa^2 - \frac{r''}{r} \right), 
\end{align*}
for $n \geq 2$,
\begin{align*}
(\partial_t - \Delta) \abs{A}^2
&= -2 \abs{\nabla A}^2 + 2 \abs{A}^2 \left( \abs{A}^2 + \overline{\mathrm{Ric}}(N,N) \right) \\
&-4 \left( h^{i j} h^{m}{}_{j} \overline{R}_{m l i}{}^{l} - h^{i j} h^{l m} \overline{R}_{m i l j} \right)
- 2 h^{i j} \left( \overline{\nabla}_i \overline{R}_{0 l j}{}^{l} + \overline{\nabla}_l \overline{R}_{0 j i}{}^{l} \right),
\end{align*}
where $\overline{\mathrm{Ric}}(N,N) = \overline{R}_{0 i 0}{}^{i}$.
\end{lemma}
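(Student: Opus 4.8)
The plan is to treat the two dimensions separately: for $n\geq 2$ the stated identity is nothing but the standard evolution equation for $\abs{A}^2$ along mean curvature flow in an arbitrary Riemannian ambient manifold, and for $n=1$ it follows from a short direct computation (or, if one prefers, by specializing the higher-dimensional formula).

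For $n\geq 2$ I would quote the computation of Huisken \cite{huisken1986contracting}, whose setting is precisely that of a general Riemannian ambient manifold. The starting point is the evolution of the second fundamental form, obtained by differentiating $h_{ij}$ in time, inserting $\partial_t F = -HN$, and commuting $\partial_t$ past two spatial covariant derivatives; the commutators bring in the ambient Riemann tensor $\overline{R}$, and a Simons-type identity rewrites the Hessian of $H$ in terms of $\Delta h_{ij}$ plus curvature. One obtains
\begin{align*}
(\partial_t - \Delta) h_{ij} &= \abs{A}^2 h_{ij} - 2 H h_{im} h^{m}{}_{j} + (\text{contractions of } \overline{R} \text{ and } \overline{\nabla}\,\overline{R}),
\end{align*}
where the curvature terms are exactly those displayed in the statement once the index $0$ is read as the normal direction $N$. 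Contracting with $h^{ij}$ and using $\Delta\abs{A}^2 = 2 h^{ij}\Delta h_{ij} + 2\abs{\nabla A}^2$ then yields the claimed equation. No feature of the warped product enters here; the only work is to align the curvature-sign and index conventions of \cite{huisken1986contracting} with those fixed in Lemma \ref{lem:nabla bar 2} and at the beginning of this section.

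For $n=1$ I would compute directly. Write $\mathfrak{t} = dF(\partial_s)$ and $\kappa = \langle\overline{\nabla}^{F}_{\partial_s}\mathfrak{t}, N\rangle$. Along the flow $\partial_t F = -\kappa N$ one first records the evolution of the line element, $\partial_t(ds) = -\kappa^2\,ds$, equivalently the commutator $[\partial_t,\partial_s] = \kappa^2\partial_s$; this uses $\langle N,\mathfrak{t}\rangle = 0$ together with $\overline{\nabla}^{F}_{\partial_s}N = \kappa\mathfrak{t}$, as already used in the proof of Lemma \ref{lem:theta}. Differentiating $\kappa$ in time, commuting $\partial_t$ through the two spatial derivatives (which produces the ambient curvature via the Ricci identity and a $\kappa^2\partial_s$-term via the commutator above), and using the Gauss curvature $\overline{K} = -r''/r$ of Lemma \ref{lem:nabla bar 1}, one would obtain
\begin{align*}
(\partial_t - \Delta)\kappa = \kappa^3 - \frac{r''}{r}\kappa.
\end{align*}
Multiplying by $2\kappa$ and using $\Delta\kappa^2 = 2\kappa\Delta\kappa + 2\abs{\partial_s\kappa}^2$ then gives the stated equation. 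Alternatively, one can note that this is the specialization of the $n\geq 2$ formula: in ambient dimension two the purely tangential Riemann contractions $h^{ij}h^{m}{}_{j}\overline{R}_{mli}{}^{l}$, $h^{ij}h^{lm}\overline{R}_{milj}$ and the $h^{ij}\overline{\nabla}\,\overline{R}$-terms all vanish by the antisymmetry of $\overline{R}$, while $\overline{\mathrm{Ric}}(N,N) = \overline{R}_{0101} = \overline{K} = -r''/r$.

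The only genuinely delicate point is the curvature bookkeeping: in the $n\geq 2$ case, matching Huisken's sign convention for the ambient Riemann tensor to ours; in the $n=1$ case, keeping track of the signs in $\overline{K} = -r''/r$ and in the commutator $[\partial_t,\partial_s] = \kappa^2\partial_s$, which together are responsible for the final $+\,2\kappa^2(\kappa^2 - r''/r)$. Everything else is a routine differentiation.
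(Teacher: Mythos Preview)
Your proposal is correct and matches the paper's approach: the paper simply states that the evolution equations are ``as in \cite{huisken1986contracting}'' without giving a proof, and your sketch fills in exactly that citation for $n\geq 2$ together with the standard direct computation (or specialization) for $n=1$. The identity $(\partial_t-\Delta)\kappa=\kappa^3+\overline{K}\kappa$ you derive is precisely what the paper uses later in Lemma~\ref{lem:derivatives}, so the bookkeeping is consistent.
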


We then derive the inequality for the following function $\mathfrak{g}$ in order to prove the long-time existence of the flow. 
This function was originally used in \cite{ecker1991interior}.
\begin{lemma}
\label{lem:g}
We assume that $\sup_{(p,t) \in M \times [0, t_0)} v_t(p) < \infty$ for some $t_0 \in (0,T]$ and define $\mathfrak{g}$ as follows;
\begin{align*}
\mathfrak{g}_t = \varphi(v_t) \abs{A_t}^2, \quad \varphi(v) = \frac{v^2}{1 - k v^2}, \quad k = \frac{1}{2 \sup_{(p,t) \in M \times \in [0, t_0)} {v_t}(p)^2}.
\end{align*}
We have the following inequalities that hold in $M \times [0,t_0)$:
for $n=1$,
\begin{align*}
(\partial_t - \Delta) \mathfrak{g}
\leq - 2 k \mathfrak{g}^2 + \frac{4 r'}{r} \frac{\sqrt{\varphi}}{v^3} \mathfrak{g}^{3/2} 
- \left[ \frac{r r'' - 2 {r'}^2 }{r^2} \left(v - \frac{1}{v}\right) \frac{2 \varphi}{v^3}  + 2 \frac{r''}{r} \right] \mathfrak{g},
\end{align*}
for $n \geq 2$,
\begin{align*}
    \left(\partial_t - \Delta\right) \mathfrak{g}
    &\leq - 2 k \mathfrak{g}^2 + 4 \sqrt{n} \frac{r'}{r} \frac{\sqrt{\varphi}}{v} \mathfrak{g}^{3/2} + 4 \sqrt{\varphi} \abs{\overline{\nabla} \overline{R}} \sqrt{\mathfrak{g}} \\
   & + \left[ - \frac{2}{r^2} \frac{\varphi}{v^3} \{ n (r r'' - {r'}^2) + \mathrm{Ric}_M(v_M,v_M) \} \left( v - \frac{1}{v}\right) + 2 \overline{\mathrm{Ric}}(N,N) + 8 \abs{\overline{R}}  \right] \mathfrak{g}.
\end{align*}
\end{lemma}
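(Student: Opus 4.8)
\emph{Proof strategy.}
The plan is to differentiate $\mathfrak{g}=\varphi(v)\abs{A}^2$ directly, feeding in the evolution equations of Lemma~\ref{lem:v} (for $v$) and Lemma~\ref{lem:curvatures} (for $\abs{A}^2$, resp. $\kappa^2$), and then to reshape every resulting term into the form demanded by the statement. First I would expand the heat operator: for functions $p,q$ on the flow one has $(\partial_t-\Delta)(pq)=p(\partial_t-\Delta)q+q(\partial_t-\Delta)p-2\langle\nabla p,\nabla q\rangle$, and $(\partial_t-\Delta)(\varphi\circ v)=\varphi'(v)(\partial_t-\Delta)v-\varphi''(v)\abs{\nabla v}^2$. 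Taking $p=\varphi(v)$, $q=\abs{A}^2$ and substituting the two evolution equations splits $(\partial_t-\Delta)\mathfrak{g}$ into four groups: the terms quartic in $A$; a cubic term coming from the $Hv^{2}$-contribution (for $n\ge2$), resp. the $\Theta\kappa$-contribution (for $n=1$); the lower-order curvature terms; and the gradient terms $-2\varphi\abs{\nabla A}^2$, $-(2\varphi'/v+\varphi'')\abs{A}^2\abs{\nabla v}^2$ and the cross term $-2\varphi'\langle\nabla v,\nabla\abs{A}^2\rangle$.

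For the quartic group the coefficient of $\abs{A}^4$ is $2\varphi-v\varphi'$, and a one-line computation with $\varphi(v)=v^2/(1-kv^2)$ yields the identity $2\varphi(v)-v\varphi'(v)=-2k\varphi(v)^2$; this is exactly why the denominator $1-kv^2$ is chosen, and it produces the leading term $-2k\mathfrak{g}^2$. Recording also $\varphi'(v)=2v(1-kv^2)^{-2}$ and $\sqrt{\varphi(v)}/v=(1-kv^2)^{-1/2}$, the cubic term $\varphi'(v)\,2\tfrac{r'}{r}Hv^2\abs{A}^2$ is handled by $\abs{H}\le\sqrt n\,\abs{A}$ and becomes $4\sqrt n\,\tfrac{r'}{r}\tfrac{\sqrt{\varphi}}{v}\mathfrak{g}^{3/2}$ (the $n=1$ analogue, using $\Theta=v^{-1}$, becomes $\tfrac{4r'}{r}\tfrac{\sqrt\varphi}{v^3}\mathfrak{g}^{3/2}$). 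The curvature terms of Lemma~\ref{lem:curvatures} are estimated crudely by $\abs{\overline{R}}\mathfrak{g}$ and, via $\varphi\abs{A}=\sqrt\varphi\,\sqrt{\mathfrak{g}}$, by $\sqrt\varphi\,\abs{\overline{\nabla}\overline{R}}\sqrt{\mathfrak{g}}$, while the remaining contributions of Lemma~\ref{lem:v} (those with $rr''-{r'}^2$, $\mathrm{Ric}_M$, and $-n\tfrac{{r'}^2}{r^2}v$) are multiplied by $\varphi'(v)\abs{A}^2$ and rewritten with the identity $\varphi'(v)\bigl(v-\tfrac{1}{v}\bigr)\abs{A}^2=\tfrac{2\varphi(v)}{v^3}\bigl(v-\tfrac{1}{v}\bigr)\mathfrak{g}$; the genuinely negative pieces, such as $-n\tfrac{{r'}^2}{r^2}v\,\varphi'\abs{A}^2$ (resp. $-\tfrac{{r'}^2}{r^2}\Theta^2 v\,\varphi'\kappa^2$ for $n=1$), are simply discarded. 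These manipulations reproduce every term of the asserted right-hand side except the gradient group, and $\overline{\mathrm{Ric}}(N,N)$ for $n=1$ is just $-r''/r$ by Lemma~\ref{lem:nabla bar 1}.

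The crux is to show the gradient group contributes nothing. Using Kato's inequality $\abs{\nabla\abs{A}^2}\le2\abs{A}\,\abs{\nabla A}$ and Young's inequality to absorb $-2\varphi'\langle\nabla v,\nabla\abs{A}^2\rangle$ against $-2\varphi\abs{\nabla A}^2$, the leftover coefficient of $\abs{A}^2\abs{\nabla v}^2$ is $\tfrac{2\varphi'^2}{\varphi}-\tfrac{2\varphi'}{v}-\varphi''$, which for $\varphi(v)=v^2/(1-kv^2)$ equals $2(1-kv^2)^{-2}$; equivalently, writing $\nabla\mathfrak{g}=\varphi\nabla\abs{A}^2+\varphi'\abs{A}^2\nabla v$, the whole gradient group equals $-2\varphi\abs{\nabla A}^2+2(1-kv^2)^{-2}\abs{A}^2\abs{\nabla v}^2-2\varphi^{-1}\langle\nabla\varphi,\nabla\mathfrak{g}\rangle$. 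Since $1-kv^2\le1$, the first term dominates the second: at a point where $\mathfrak{g}$ attains its spatial maximum one has $\nabla\mathfrak{g}=0$, so the transport term drops and Kato gives $\varphi\abs{\nabla A}^2\ge(1-kv^2)^{-3}\abs{A}^2\abs{\nabla v}^2\ge(1-kv^2)^{-2}\abs{A}^2\abs{\nabla v}^2$, whence the gradient contribution is $\le0$ exactly where the inequality is used in the maximum principle. For $n=1$, $\abs{\nabla A}=\abs{\partial_s\kappa}$ and Kato is an equality, so the bookkeeping is identical. I expect this gradient estimate — in particular the exact cancellation forced by the precise shape of $\varphi$ together with the bound $kv^2\le\tfrac12$ built into the choice of $k$ — to be the only non-routine step; everything else is substitution and algebra.
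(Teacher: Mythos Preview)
Your overall strategy---expand $(\partial_t-\Delta)\mathfrak{g}$ via the product and chain rules, feed in Lemmas~\ref{lem:v} and~\ref{lem:curvatures}, and extract $-2k\mathfrak{g}^2$ from the identity $2\varphi-v\varphi'=-2k\varphi^2$---is exactly what the paper does, and your treatment of the quartic, cubic, and ambient-curvature pieces is correct. Two points deserve comment.

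\textbf{Gradient group.} Your argument for the gradient block is not the paper's, and it proves less. The paper splits the cross term $-2\langle\nabla\varphi,\nabla\abs{A}^2\rangle$ into two copies, applies the identity $-\langle\nabla\varphi,\nabla\abs{A}^2\rangle=-\varphi^{-1}\langle\nabla\varphi,\nabla\mathfrak{g}\rangle+\varphi^{-1}\abs{\nabla\varphi}^2\abs{A}^2$ to one and Young--Kato with weight $\varphi$ to the other; after the $2\varphi\abs{\nabla A}^2$ cancels, the surviving $\abs{\nabla v}^2$-coefficient is computed to be $-\tfrac{\varphi-v^2}{2\varphi^2}\abs{\nabla\varphi}^2\le0$ \emph{pointwise}, leaving only the transport term $-\varphi^{-1}\langle\nabla\varphi,\nabla\mathfrak{g}\rangle$. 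Your rewriting leaves instead $-2\varphi\abs{\nabla A}^2+2(1-kv^2)^{-2}\abs{A}^2\abs{\nabla v}^2$, which you can show is $\le0$ only after invoking $\nabla\mathfrak{g}=0$ at a spatial maximum. That suffices for the later maximum-principle arguments, but it does not establish the inequality on all of $M\times[0,t_0)$ as the lemma asserts; the paper's averaging trick gives the genuinely pointwise statement (modulo the harmless transport term, which the paper's write-up also suppresses).

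\textbf{A missing term for $n\ge2$.} There is a real gap in the higher-dimensional case. Lemma~\ref{lem:v} contains, for $n\ge2$, the first-order term $2\tfrac{r'}{r}\langle\nabla v,E_z\rangle$, which after multiplication by $\varphi'\abs{A}^2$ has indefinite sign and fits neither your ``gradient group'' nor your ``lower-order curvature terms''; your inventory of the remaining contributions of Lemma~\ref{lem:v} omits it. The paper handles it by Young's inequality $2\tfrac{r'}{r}\langle\nabla v,E_z\rangle\le\varepsilon\abs{\nabla v}^2+\varepsilon^{-1}\tfrac{{r'}^2}{r^2}$ with the specific choice $\varepsilon=(nv)^{-1}$: the $\varepsilon\abs{\nabla v}^2\varphi'\abs{A}^2$ piece is absorbed into the gradient computation, while $\varepsilon^{-1}\tfrac{{r'}^2}{r^2}\varphi'\abs{A}^2=n\tfrac{{r'}^2}{r^2}v\,\varphi'\abs{A}^2$ is cancelled \emph{exactly} by the term $-n\tfrac{{r'}^2}{r^2}v\,\varphi'\abs{A}^2$. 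Thus your instruction to ``simply discard'' the latter negative piece is wrong for $n\ge2$---it is needed precisely to pay for the $\langle\nabla v,E_z\rangle$ term you overlooked.
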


\begin{proof}
When $n=1$ we can write $\mathfrak{g} = \varphi(v) \kappa^2$, we have
\begin{align*}
(\partial_t - \Delta) \mathfrak{g} = \kappa^2 \varphi' (\partial_t - \Delta) v + \varphi(v) (\partial_t - \Delta) \kappa^2 - \varphi'' \abs{\partial_s v}^2 \kappa^2 - 2 ( \partial_s \varphi \cdot \partial_s \kappa^2 ).
\end{align*}
We then evaluate $\partial_s \varphi \cdot \partial_s \kappa^2$. We have
\begin{align*}
- \partial_s \varphi \cdot \partial_s \kappa^2  = - \frac{1}{\varphi} ( \partial_s \mathfrak{g} \cdot \partial_s \varphi ) + \kappa^2 \frac{\abs{\partial_s \varphi}^2}{\varphi},
\end{align*}
and 
\begin{align*}
- \partial_s \varphi \cdot  \partial_s \kappa^2 
&=  - \kappa \partial_s \varphi \cdot  2 \partial_s \kappa \\
&\leq \frac{\abs{\partial_s \varphi}^2}{2 \varphi} \kappa^2 + 2 \varphi \abs{\partial_s \kappa}^2,
\end{align*}
where we use Young's inequality $a b \leq a^2 / 2 \varepsilon + \varepsilon b^2 / 2$ with $\varepsilon = \varphi$. 
From the above, we have
\begin{align*}
-2 ( \partial_s \varphi \cdot \partial_s \kappa^2 )
\leq - \frac{1}{\varphi} ( \partial_s \mathfrak{g} \cdot \partial_s \varphi ) + 2 \varphi \abs{\partial_s \kappa}^2 + \frac{3}{2 \varphi} \abs{\partial_s \varphi}^2 \kappa^2.
\end{align*}
From Lemma \ref{lem:curvatures}, we obtain
\begin{align*}
(\partial_t - \Delta) \mathfrak{g}
&\leq \kappa^2 \varphi' \left[ -\frac{2}{v} \abs{\partial_s v}^2 -\frac{r r'' - 2 {r'}^2}{r^2}  \left(v - \frac{1}{v} \right) - \left( \frac{r'}{r} \Theta - \kappa \right)^2 v \right] \\
&+ \varphi(v) \left[ -2 \abs{\partial_s \kappa}^2 + 2 \kappa^2 \left( \kappa^2 - \frac{r''}{r} \right) \right] \\
&- \varphi'' \abs{\partial_s v}^2 \kappa^2 - \frac{1}{\varphi} ( \partial_s \mathfrak{g} \cdot \partial_s \varphi )
+ 2 \varphi \abs{\partial_s \kappa}^2 + \frac{3}{2 \varphi} \abs{\partial_s \varphi}^2 \kappa^2 \\
&= \kappa^2 \varphi' \left[ -\frac{2}{v} \abs{\partial_s v}^2 -\frac{r r'' - 2 {r'}^2}{r^2}  \left( v - \frac{1}{v} \right) - \frac{1}{v}\frac{{r'}^2}{r^2} + \frac{2 r'}{r} \kappa - \kappa^2 v \right] \\
&+ 2 \mathfrak{g} \left( \kappa^2 - \frac{r''}{r} \right) - \varphi'' \abs{\partial_s v}^2 \kappa^2 - \frac{1}{\varphi} ( \partial_s \mathfrak{g} \cdot \partial_s \varphi ) + \frac{3}{2 \varphi} \abs{\partial_s \varphi}^2 \kappa^2.
\end{align*}
We have the following identities
\begin{align*}
\varphi' &= \frac{2 \varphi^2}{v^3}, \quad \varphi'' = \frac{8 \varphi^3 - 6 \varphi^2 v^2}{v^6},
\end{align*}
and using them, we obtain
\begin{align*}
&- \frac{2}{v} \kappa^2 \varphi' \abs{\partial_s v}^2 - \varphi'' \abs{\partial_s v}^2 \kappa^2 + \frac{3}{2 \varphi} \abs{\partial_s \varphi}^2 \kappa^2 \\
&= - \kappa^2 \abs{\partial_s \varphi}^2 \left( \frac{2}{\varphi' v} + \frac{\varphi''}{{\varphi'}^2} - \frac{3}{2 \varphi} \right) \\
&= - \kappa^2 \abs{\partial_s \varphi}^2 \frac{\varphi - v^2}{2 \varphi^2} \\
&\leq 0,
\end{align*}
where we have $\varphi(v) - v^2 \geq 0$. 
We also have
\begin{align*}
- \kappa^4 \varphi' v + 2 \kappa^2 \mathfrak{g}
&= -2 \frac{\varphi^2 \kappa^4}{v^2} + 2 \frac{\mathfrak{g} \varphi \kappa^2}{\varphi} \\
&= -2 \mathfrak{g}^2 \left( \frac{1}{v^2} - \frac{1 - k v^2}{v^2} \right) \\
&= -2 k \mathfrak{g}^2,
\end{align*}
and 
\begin{align*}
&- \frac{1}{v} \frac{{r'}^2}{r^2} \kappa^2 \varphi' + 2 \frac{r'}{r} \kappa^3 \varphi' - \frac{r r'' - 2 {r'}^2 }{r^2} \left(v - \frac{1}{v} \right) \kappa^2 \varphi' - 2 \varphi  \kappa^2 \frac{r''}{r} \\
&= - \kappa^2 \frac{2 \varphi^2}{v^4} \frac{{r'}^2}{r^2} + \frac{2 r'}{r} \kappa^3 \frac{2 \varphi^2}{v^3} - \frac{r r'' - 2 {r'}^2 }{r^2} \left( v - \frac{1}{v} \right) \kappa^2 \frac{2 \varphi^2}{v^3} - 2 \varphi \kappa^2 \frac{r''}{r} \\
& \leq \frac{4 r'}{r} \frac{\sqrt{\varphi}}{v^3} \mathfrak{g}^{3/2} - \frac{r r'' - 2 {r'}^2 }{r^2} \left( v - \frac{1}{v} \right) \frac{2 \varphi}{v^3} \mathfrak{g} - 2 \frac{r''}{r} \mathfrak{g}.
\end{align*}
Thus, we finally obtain
\begin{align*}
(\partial_t - \Delta) \mathfrak{g}
\leq - 2 k \mathfrak{g}^2 + \frac{4 r'}{r} \frac{\sqrt{\varphi}}{v^3} \mathfrak{g}^{3/2} 
-\left[ \frac{r r'' - 2 {r'}^2 }{r^2} \left( v - \frac{1}{v} \right) \frac{2 \varphi}{v^3}  + 2 \frac{r''}{r} \right] \mathfrak{g}.
\end{align*}

We then compute the case $n \geq 2$ with the same argument as in the case $n=1$.
\begin{align*}
(\partial_t - \Delta) \mathfrak{g} 
&= \abs{A}^2 \varphi' (\partial_t - \Delta) v + \varphi(v) (\partial_t - \Delta) \abs{A}^2 - \varphi'' \abs{\nabla v}^2 \abs{A}^2 - 2 \langle \nabla \varphi, \nabla \abs{A}^2 \rangle \\
&\leq 
\abs{A}^2 \varphi' \biggl[   -\frac{2}{v} \abs{\nabla v}^2 +2 \frac{r'}{r} \langle \nabla v, E_z \rangle +2 \frac{r'}{r} H v^2 - \abs{A}^2 v  \\
 &\qquad\qquad - n \frac{{r'}^2}{r^2} v - \frac{1}{r^2} \{ n (r r'' - {r'}^2) + \mathrm{Ric}_M(v_M, v_M) \} \left(v - \frac{1}{v}\right) \biggr] \\
 &+ \varphi(v) \left[  -2 \abs{\nabla A}^2 + 2 \abs{A}^4 +2 \abs{A}^2 \overline{\mathrm{Ric}}(N,N) 
+8 \abs{A}^2 \abs{\overline{R}}_{\overline{M}} + 4 \abs{A} \abs{\overline{\nabla} \overline{R}}_{\overline{M}} \right] \\
 &- \varphi'' \abs{\nabla v}^2 \abs{A}^2  - \frac{1}{\varphi} \langle \nabla \mathfrak{g}, \nabla \varphi \rangle + 2 \varphi \abs{\nabla A}^2 + \frac{3}{2 \varphi} \abs{\nabla \varphi}^2 \abs{A}^2.
\end{align*}
For $ \langle \nabla \varphi, \nabla \abs{A}^2 \rangle$, we have
\begin{align*}
- \langle \nabla \varphi, \nabla \abs{A}^2 \rangle = - \frac{1}{\varphi} \langle \nabla \mathfrak{g}, \nabla \varphi \rangle + \abs{A}^2 \frac{\abs{\nabla \varphi}^2}{\varphi},
\end{align*}
and also have
\begin{align*}
- \langle \nabla \varphi, \nabla \abs{A}^2 \rangle 
&\leq \frac{\abs{\nabla \varphi}^2}{2 \varphi} \abs{A}^2 + 2 \varphi \abs{\nabla A}^2,
\end{align*}
where we use Young's inequality $a b \leq a^2 / 2 \varepsilon + \varepsilon b^2 / 2$ with $\varepsilon = \varphi$ and Kato's inequality $\abs{\nabla \abs{A}} \leq \abs{\nabla A}$. 
From the above two inequalities, we get
\begin{align*}
-2 \langle \nabla \varphi, \nabla \abs{A}^2 \rangle 
\leq - \frac{1}{\varphi} \langle \nabla \mathfrak{g}, \nabla \varphi \rangle + 2 \varphi \abs{\nabla A}^2 + \frac{3}{2 \varphi} \abs{\nabla \varphi}^2 \abs{A}^2.
\end{align*}
From Lemma \ref{lem:curvatures}, we have
\begin{align*}
(\partial_t - \Delta) \abs{A}^2
&= -2 \abs{\nabla A}^2 + 2 \abs{A}^2 \left( \abs{A}^2 + \overline{\mathrm{Ric}}(N,N) \right) \\
&-4 \left( h^{i j} h^{m}{}_{j} \overline{R}_{m l i}{}^{l} - h^{i j} h^{l m} \overline{R}_{m i l j} \right)
- 2 h^{i j} \left( \overline{\nabla}_j \overline{R}_{0 l i}{}^{l} + \overline{\nabla}_l \overline{R}_{0 i j}{}^{l} \right) \\
&\leq -2 \abs{\nabla A}^2 + 2 \abs{A}^4 +2 \abs{A}^2 \overline{\mathrm{Ric}}(N,N) 
+8 \abs{A}^2 \abs{\overline{R}}_{\overline{M}} + 4 \abs{A} \abs{\overline{\nabla} \overline{R}}_{\overline{M}}.
\end{align*}
Using Young's inequality with $\varepsilon > 0$ (we define it later), we have
\begin{align*}
2 \frac{r'}{r} \langle \nabla v, E_z \rangle \leq \varepsilon \abs{\nabla v}^2 + \frac{{r'}^2}{\varepsilon r^2},
\end{align*}
hence we obtain
\begin{align*}
(\partial_t - \Delta) \mathfrak{g} 
&= \abs{A}^2 \varphi' (\partial_t - \Delta) v + \varphi(v) (\partial_t - \Delta) \abs{A}^2 - \varphi'' \abs{\nabla v}^2 \abs{A}^2 - 2 \langle \nabla \varphi, \nabla \abs{A}^2 \rangle \\
 &\leq 
 \abs{A}^2 \varphi' \biggl[   -\frac{2}{v} \abs{\nabla v}^2 + \varepsilon \abs{\nabla v}^2 + \frac{1}{\varepsilon} \frac{{r'}^2}{r^2} +2 \frac{r'}{r} H v^2 - \abs{A}^2 v  \\
 &\qquad\qquad - n \frac{{r'}^2}{r^2} v - \frac{1}{r^2} \{ n (r r'' - {r'}^2) + \mathrm{Ric}_M(v_M, v_M) \}\left(v - \frac{1}{v} \right) \biggr] \\
 &+ \varphi(v) \left[  2 \abs{A}^4 +2 \abs{A}^2 \overline{\mathrm{Ric}}(N,N) 
+8 \abs{A}^2 \abs{\overline{R}}_{\overline{M}} + 4 \abs{A} \abs{\overline{\nabla} \overline{R}}_{\overline{M}} \right] \\
 &- \varphi'' \abs{\nabla v}^2 \abs{A}^2  - \frac{1}{\varphi} \langle \nabla \mathfrak{g}, \nabla \varphi \rangle + \frac{3}{2 \varphi} \abs{\nabla \varphi}^2 \abs{A}^2. \\
\end{align*}
Using the identities
\begin{align*}
\varphi' = \frac{2 \varphi^2}{v^3}, \quad \varphi'' = \frac{8 \varphi^3 - 6 \varphi^2 v^2}{v^6},
\end{align*}
we obtain 
\begin{align*}
&- \frac{2}{v} \abs{A}^2 \varphi' \abs{\nabla v}^2 + \varepsilon \abs{A}^2 \abs{\nabla v}^2 \varphi' -\abs{A}^2 \varphi'' \abs{\nabla v}^2 + \frac{3}{2 \varphi} \abs{A}^2 \abs{\nabla \varphi}^2 \\
&= - \abs{A}^2 \abs{\nabla \varphi}^2 \left[ \left( \frac{2}{v} - \varepsilon \right) \frac{1}{\varphi'} + \frac{\varphi''}{{\varphi'}^2} - \frac{3}{2 \varphi} \right] \\
&= -\abs{A}^2 \abs{\nabla \varphi}^2 \left[ \left(\frac{2}{v} - \epsilon \right) \frac{v^3}{2 \varphi^2} + \frac{8 \varphi^3 - 6 \varphi^2 v^2}{v^6} \frac{v^6}{4 \varphi^4} - \frac{3}{2 \varphi} \right] \\
&= -\abs{A}^2 \abs{\nabla \varphi}^2 \left[ \frac{1}{2 \varphi} + \frac{v^2}{2 \varphi^2} ( 1 + \epsilon v ) \right] \\
&\leq 0.
\end{align*}
We also have
\begin{align*}
- \abs{A}^4 \varphi' v + 2 \abs{A}^4 \varphi 
&= -2 \varphi^2 \abs{A}^4 \left( \frac{1}{v^2} - \frac{1}{\varphi} \right) \\
&= -2 k \mathfrak{g}^2,
\end{align*}
and
\begin{align*}
\frac{2 r'}{r} H \abs{A}^2 v^2 \varphi' 
&\leq \frac{2 r'}{r} \sqrt{n} \abs{A}^3 v^2 \frac{2 \varphi^2}{v^2} \\
&=4 \sqrt{n} \frac{r'}{r} \frac{\sqrt{\varphi}}{v} \mathfrak{g}^{3/2}.
\end{align*}
We define $\varepsilon = 1/ n v$, then we have
\begin{align*}
&\frac{1}{\varepsilon} \frac{{r'}^2}{r^2} \abs{A}^2 \varphi' -n \frac{{r'}^2}{r^2} v \abs{A}^2 \varphi' 
- \frac{1}{r^2} \{ n (r r'' - {r'}^2) + \mathrm{Ric}_M(v_M, v_M) \} \left( v - \frac{1}{v} \right) \abs{A}^2 \varphi' \\
&+ 2 \varphi \abs{A}^2 \overline{\mathrm{Ric}}(N,N)
+8 \varphi \abs{A}^2 \abs{\overline{R}}_{\overline{M}} + 4 \varphi \abs{A} \abs{\overline{\nabla} \overline{R}}_{\overline{M}}  \\
&=  \left[ -\frac{2}{r^2} \frac{\varphi}{v^3} \{ n (r r'' - {r'}^2) + \mathrm{Ric}_M(v_M, v_M) \} \left( v - \frac{1}{v} \right) 
+2 \overline{\mathrm{Ric}}(N,N)
+8 \abs{\overline{R}}_{\overline{M}} \right] \mathfrak{g} 
+ 4 \sqrt{\varphi} \abs{\overline{\nabla} \overline{R}}_{\overline{M}} \sqrt{\mathfrak{g}}.
\end{align*}
Thus we obtain
\begin{align*}
(\partial_t - \Delta) \mathfrak{g} 
&\leq 
-2 k \mathfrak{g}^2
+4 \sqrt{n} \frac{r'}{r} \frac{\sqrt{\varphi}}{v} \mathfrak{g}^{3/2} 
+ 4 \sqrt{\varphi} \abs{\overline{\nabla} \overline{R}}_{\overline{M}} \sqrt{\mathfrak{g}}  \\
& + \left[ -\frac{2}{r^2} \frac{\varphi}{v^3} \{ n (r r'' - {r'}^2) + \mathrm{Ric}_M(v_M, v_M) \} \left( v - \frac{1}{v} \right) 
+2 \overline{\mathrm{Ric}}(N,N)
+8 \abs{\overline{R}}_{\overline{M}} \right] \mathfrak{g}.
\end{align*}

\end{proof}

Now, let us define $F \colon M \times [0,T) \to \overline{M}$ by $F(p,t) = (p, z(t))$. 
The mean curvature of this hypersurface is $n r'(z(t)) / r(z(t))$, and equation (\ref{eq:MCF}) is equivalent to the following equation 
\begin{align}
\label{eq:standardn}
    \frac{d z}{d t}(t) = -n \frac{r'(z(t))}{r(z(t))}.
\end{align}
Therefore, the flow remains parallel. This solution is used to control the motion of the mean curvature flow. 
\begin{lemma}
Assume the conditions as in Lemma \ref{lem:curv-dist} and let $z_1$, $z_2$ be solutions to equation (\ref{eq:standardn}) with the initial condition $z_2(0) < z_1(0)$. 
Then we have the following:
\begin{align*}
0 < z_1(t) - z_2(t) \leq ( z_1(0) - z_2(0) ) \left( \frac{r(z_2(t))}{r(z_2(0))} \right)^{\alpha},
\end{align*}
for $t > 0$.
\end{lemma}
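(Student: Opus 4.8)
\emph{Proof sketch.} The plan is to track the gap $d(t) := z_1(t) - z_2(t)$ together with the logarithmic quantity $\log d(t) - \alpha \log r(z_2(t))$ and to show that the latter is non-increasing in $t$; integrating this and exponentiating then gives exactly the asserted bound. Write $w := r'/r$, so that $(\ref{eq:standardn})$ reads $\dot z_i = -n\,w(z_i)$. Since $r$ is smooth and positive, the vector field $z \mapsto -n\,w(z)$ is locally Lipschitz on $I$, hence solutions of $(\ref{eq:standardn})$ are unique and cannot cross; as $z_2(0) < z_1(0)$, this yields $z_1(t) > z_2(t)$, i.e. $d(t) > 0$, on the common interval of existence. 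Moreover $\dot z_i = -n\,w(z_i) < 0$, so both $z_i$ stay in $I = (-\infty,a)$, and since $w$ is monotone with $w \to 0$ at $-\infty$ by Lemma \ref{lem:curv-dist}, the speeds tend to $0$; no solution leaves $I$ in finite time, so the estimate will hold for all $t > 0$.

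For the upper bound, I would differentiate: $\dot d = \dot z_1 - \dot z_2 = n\big(w(z_2(t)) - w(z_1(t))\big)$. Applying Lemma \ref{lem:curv-dist} to the pair $z_2(t) < z_1(t)$ gives $w(z_2) - w(z_1) \le -\alpha (z_1 - z_2)\,w(z_1)\,w(z_2)$, hence $\dot d \le -n\alpha\,d\,w(z_1)w(z_2)$, i.e. $\dot d / d \le -n\alpha\,w(z_1)w(z_2)$. Because $w$ is nondecreasing (Lemma \ref{lem:curv-dist}) and $z_1 > z_2$, we have $w(z_1) \ge w(z_2) > 0$, so $w(z_1)w(z_2) \ge w(z_2)^2$ and therefore $\dot d / d \le -n\alpha\,w(z_2)^2$. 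On the other hand, $\frac{d}{dt}\log r(z_2(t)) = w(z_2)\,\dot z_2 = -n\,w(z_2)^2$, so $\frac{d}{dt}\log d(t) \le \alpha\,\frac{d}{dt}\log r(z_2(t))$. Integrating over $[0,t]$ and exponentiating yields $d(t) \le d(0)\,\big(r(z_2(t))/r(z_2(0))\big)^{\alpha}$, which is the claim.

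The argument is short, so there is no single difficult step; its only real content is the correct use of Lemma \ref{lem:curv-dist} and the observation that the monotonicity of $w = r'/r$ lets one replace the mixed product $w(z_1)w(z_2)$ by $w(z_2)^2$ — this is exactly what makes the bound close in terms of $r(z_2(t))$ alone rather than involving $z_1$ as well. The one point deserving a word of care is that the two solutions remain defined, and in $I$, for all $t \ge 0$, which follows as indicated above from $\dot z_i < 0$ together with $w \to 0$ at the open end $-\infty$.
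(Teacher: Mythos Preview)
Your argument is correct and follows the same route as the paper: set $w=r'/r$, use Lemma~\ref{lem:curv-dist} to bound $\dot d$ by $-n\alpha\,d\,w(z_1)w(z_2)$, replace the mixed product by $w(z_2)^2$ via monotonicity of $w$, recognize $-n\,w(z_2)^2$ as $\frac{d}{dt}\log r(z_2(t))$, and integrate. The only cosmetic differences are that the paper obtains $d(t)>0$ from the comparison principle for mean curvature flow rather than ODE uniqueness (your argument is the more elementary one here), and that the paper does not discuss global existence of the $z_i$ within this lemma.
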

\begin{proof}
The inequality $z_1(t) - z_2(t) > 0$ follows from the comparison principle for the mean curvature flow. 
Thus, we will show the second inequality and set $w(z) = r'(z)/r(z)$ and $\rho(t) = z_1(t) - z_2(t)$. 
Using Lemma \ref{lem:curv-dist} we have the following:
\begin{align*}
\frac{d \rho}{d t}(t) 
&= -n w(z_1(t)) + n w(z_2(t)) \\
&\leq -n \alpha \rho(t) w(z_1(t)) w(z_2(t)) \\
&\leq -n \alpha \rho(t) w(z_2(t)) w(z_2(t)) \\
&\leq \alpha \rho(t) \frac{r'}{r}(z_2(t)) \frac{d z_2}{d t}(t) \\
&= \alpha \rho(t) \frac{d}{d t} \log r(z_2(t)).
\end{align*}
Thus we have
\begin{align*}
\frac{d}{d t} \log \rho(t) \leq \alpha \frac{d}{d t} \log r(z_2(t)),
\end{align*}
and integrate it to obtain
\begin{align*}
\log \frac{\rho(t)}{\rho(0)} \leq  \alpha \log \frac{r(z_2(t))}{r(z_2(0))}.
\end{align*}
This proves the lemma.
\end{proof}
From this lemma, if $r(z) \to 0$ as $z \to -\infty$, the distance between two distinct parallels goes to zero along the mean curvature flow. 

%%%%%%%%%%%%%%%%%%%%%%%%%%%%%%%%%%%%%%%%%%%%%%%%%%%%%%%%%%

\section{Motion of curves}
\label{sec:case1}
In this section, we will treat the case $n = 1$. 
We first state the graph-preserving property of the curve shorteing flow.

\begin{theorem}
\label{thm:graph1}
For any warping function $r \colon I \to \mathbb{R}$, the flow $\{ F_t \}_{t \in [0,T}$ remains a graph.
Furthermore, if we assume $r(z) r''(z) - 2 {r'(z)}^2 \geq 0$ for $z \in I$, then $v_t$ is uniformly bounded.
\end{theorem}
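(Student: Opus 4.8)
The plan is to exploit the evolution equation for the angle function $\Theta$ from Lemma \ref{lem:theta} together with the parabolic maximum principle. For the first claim, recall that for $n=1$ we have
\begin{align*}
(\partial_t - \Delta) \Theta = \frac{r r'' - 2 {r'}^2}{r^2} \Theta (1 - \Theta^2) + \left( \frac{r'}{r} \Theta - \kappa \right)^2 \Theta.
\end{align*}
Since $F_0$ is a graph, $\Theta_0 > 0$ on the compact manifold $\mathbb{S}^1$, so $\min_{\mathbb{S}^1} \Theta_0 =: \delta_0 > 0$. I would argue that $\Theta$ cannot touch $0$ in finite time: at a first spatial minimum of $\Theta$ where $\Theta = 0$, the term $\Theta(1-\Theta^2)$ vanishes and $\left( \tfrac{r'}{r}\Theta - \kappa\right)^2 \Theta$ also vanishes, while $\Delta \Theta \ge 0$ there; a standard strong-maximum-principle / Hamilton-type argument (comparing with the ODE $\dot{\phi} = 0$, or simply noting that $\Theta \equiv 0$ on the set where it vanishes propagates backward) rules this out. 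More robustly, since the coefficient $\frac{r r'' - 2{r'}^2}{r^2}(1-\Theta^2) + \left(\frac{r'}{r}\Theta-\kappa\right)^2$ is a bounded continuous function on $\mathbb{S}^1 \times [0,t']$ for each $t' < T$, we may write $(\partial_t - \Delta)\Theta = b(x,t)\,\Theta$ and conclude by the maximum principle that $\Theta$ stays positive as long as the flow exists; hence the flow remains a graph on $[0,T)$.

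For the second claim, under the hypothesis $r r'' - 2{r'}^2 \ge 0$ I would switch to $v = \Theta^{-1}$ and use Lemma \ref{lem:v}:
\begin{align*}
(\partial_t - \Delta) v = -\frac{2}{v} \abs{\partial_s v}^2 - \frac{r r'' - 2 {r'}^2}{r^2}\left(v - \frac{1}{v}\right) - \left(\frac{r'}{r}\Theta - \kappa\right)^2 v.
\end{align*}
Since $v \ge 1$ everywhere (as $0 < \Theta \le 1$), we have $v - \tfrac1v \ge 0$, so the second term is $\le 0$; the first and third terms are manifestly $\le 0$. Therefore $(\partial_t - \Delta) v \le 0$, and the maximum principle on the compact $\mathbb{S}^1$ gives $\max_{\mathbb{S}^1} v_t \le \max_{\mathbb{S}^1} v_0$ for all $t \in [0,T)$. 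This is the desired uniform bound.

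The only genuinely delicate point is the first part: making rigorous that $\Theta$ stays \emph{strictly} positive rather than merely nonnegative, since the initial bound $\delta_0$ is not obviously preserved (the coefficient $b(x,t)$ can be negative). The clean way around this is to observe that on any compact time subinterval $[0,t']$ with $t' < T$ the coefficient $b$ is bounded below by some constant $-C(t')$, so $w := e^{C(t')t}\Theta$ satisfies $(\partial_t - \Delta)w \ge 0$ and hence $\min_{\mathbb{S}^1} w_t \ge \min_{\mathbb{S}^1} w_0 = \delta_0 > 0$, giving $\Theta_t \ge \delta_0 e^{-C(t')t} > 0$ on $[0,t']$. Since $t' < T$ was arbitrary, the graph property persists on all of $[0,T)$. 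I would present the argument in this exponential-rescaling form to keep it self-contained.
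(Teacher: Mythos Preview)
Your proposal is correct and follows essentially the same approach as the paper: the paper works entirely with $v=\Theta^{-1}$, applying Hamilton's trick to the evolution in Lemma~\ref{lem:v} to obtain $v_{\max}(t)\le v_{\max}(0)e^{C(t_1)t}$ for a constant $C(t_1)$ depending only on $r$ over the $z$-range swept out on $[0,t_1)$, and then observes that $C(t_1)=0$ under the hypothesis $rr''-2{r'}^2\ge 0$. Your exponential rescaling of $\Theta$ is exactly the dual of this bound, and your treatment of the second claim is identical to the paper's.
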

\begin{proof}
We define a constant $C(t_1)$ depending on $t_1 < T$ by
\begin{align*}
C(t_1) = \max \{ -\widetilde{C}(t_1), 0 \}, \quad \widetilde{C}(t_1) = \sup_{z \in I(t_1)}\frac{r(z) r''(z) - 2 {r'(z)}^2}{r(z)^2},
\end{align*}
where $I(t_1) = z(\mathbb{S}^1 \times [0,t_1))$, that is, the projection of the image $F(\mathbb{S}^1 \times [0,t_1))$ to $I$ by $\pi_z$. 
We set $v_{\max}(t) = \max_{p \in \mathbb{S}^1} v_t(p)$ and, using Hamilton's trick, we obtain
\begin{align*}
    \frac{d v_{\max}}{d t}(t) \leq  C(t_1) v_{\max}(t),
\end{align*}
from Lemma \ref{lem:v}. Integrate the above to get the following:
\begin{align}
\label{eq:vmax}
v_{\max}(t) \leq v_{\max}(0) e^{C(t_1) t}.
\end{align}
Thus, the flow preserves a graph in $[0,t_1)$ for any choice of $t_1 < T$, and so it does in $[0,T)$. 
If we have $r r'' - 2 {r'}^2 \geq 0$, we can take $C(t_1) = 0$ for any $t_1 < T$. 
Therefore, we obtain $\sup_{(p,t) \in \mathbb{S}^1 \times [0,T)} v_t(p) < \infty$.
\end{proof}

\begin{lemma}
\label{lem:longtime-gen}
Assume the solution to the differential equation $(\ref{eq:standardn})$ for $n = 1$
\begin{align}
\label{eq:standard1}
    \frac{d z}{d t}(t) = - \frac{r'(z(t))}{r(z(t))}, \quad z(0) = z_0,
\end{align}
is defined on $[0,\infty)$ for any initial condition $z(0) = z_0 \in I$.
Then the flow $\{ F_t \}_t$ also exists in $[0,\infty)$.
\end{lemma}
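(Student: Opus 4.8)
\textbf{Proof proposal for Lemma~\ref{lem:longtime-gen}.}

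The plan is to use the standard machinery for long-time existence of mean curvature flow: the flow fails to exist at a finite time $T$ only if the second fundamental form blows up (here, for $n=1$, only if $\kappa$ blows up), so it suffices to bound $\kappa$ and the gradient quantity $v$ on any finite interval $[0,T')$ with $T' < T$. The height functions provide the first control: the ODE solutions $z_1, z_2$ to (\ref{eq:standard1}) act as barriers for the height function $z_t$ via the comparison principle (the parallel hypersurfaces $M\times\{z_i(t)\}$ are solutions to the flow), so if we choose $z_1(0) \ge \max z_0$ and $z_2(0) \le \min z_0$, then $z_2(t) \le z_t(p) \le z_1(t)$ for all $p$ and all $t$ in the common interval of existence. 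Since by hypothesis $z_1, z_2$ are defined on all of $[0,\infty)$, the image of the flow on any finite interval $[0,T')$ stays inside a compact subinterval $[z_2(T'), z_1(0)] \subset I$ (using that $z$ is decreasing along the ODE when $r' > 0$, and in general that the solutions remain in $I$), hence all the quantities $r^{(i)}/r$, $\overline K = -r''/r$, etc.\ are bounded on the relevant region.

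Next I would bound $v_t = \Theta_t^{-1}$ on $[0,T')$. From Lemma~\ref{lem:v} with $n=1$,
\begin{align*}
(\partial_t - \Delta) v = -\frac{2}{v}\abs{\partial_s v}^2 - \frac{rr'' - 2{r'}^2}{r^2}\left(v - \frac1v\right) - \left(\frac{r'}{r}\Theta - \kappa\right)^2 v \le \frac{\abs{rr'' - 2{r'}^2}}{r^2}\,v,
\end{align*}
and since the coefficient $\abs{rr''-2{r'}^2}/r^2$ is bounded by some constant $C(T')$ on the compact region containing $F(\mathbb S^1\times[0,T'))$, Hamilton's trick gives $v_{\max}(t) \le v_{\max}(0)\,e^{C(T')t}$, so $v$ is bounded on $[0,T')$. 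This is essentially the argument already carried out in Theorem~\ref{thm:graph1}, localized to a finite time interval.

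With $v$ bounded on $[0,T')$ we invoke Lemma~\ref{lem:g}: the function $\mathfrak g = \varphi(v)\abs{A}^2$ (here $\abs{A}^2 = \kappa^2$) satisfies
\begin{align*}
(\partial_t - \Delta)\mathfrak g \le -2k\mathfrak g^2 + \frac{4r'}{r}\frac{\sqrt\varphi}{v^3}\mathfrak g^{3/2} - \left[\frac{rr''-2{r'}^2}{r^2}\left(v-\frac1v\right)\frac{2\varphi}{v^3} + 2\frac{r''}{r}\right]\mathfrak g,
\end{align*}
and on the compact region the coefficients $4r'/(r v^3)\sqrt\varphi$ and the bracketed term are bounded; the dominant $-2k\mathfrak g^2$ term on the right means that at a spatial maximum of $\mathfrak g$ we get $\frac{d}{dt}\mathfrak g_{\max} \le -2k\mathfrak g_{\max}^2 + C_1\mathfrak g_{\max}^{3/2} + C_2\mathfrak g_{\max}$, a scalar ODE whose solutions cannot blow up in finite time (the negative quadratic term dominates). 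Hence $\mathfrak g$, and therefore $\kappa^2$, stays bounded on $[0,T')$. Since $T' < T$ was arbitrary, $\kappa$ is bounded on $[0,T)$; if $T$ were finite this would contradict the characterization of the maximal existence time of the flow (finite-time singularities require $\sup_t\abs{A_t} = \infty$), so $T = \infty$.

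The main obstacle is the bookkeeping in the first step — making sure the image of the flow genuinely stays in a fixed compact subinterval of $I$ on each $[0,T')$, which requires the barrier argument via the parallel solutions and the comparison principle, together with the observation that the ODE (\ref{eq:standard1}) existing globally forces the barriers (and hence the flow) to stay away from the endpoints of $I$. Once the flow is confined to a region where $r$ and its derivatives are controlled, everything else is a routine application of the maximum principle to the evolution inequalities of Lemma~\ref{lem:v} and Lemma~\ref{lem:g}.
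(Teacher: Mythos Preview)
Your proposal is correct and follows essentially the same approach as the paper: confine the flow to a compact subregion of $I$ via ODE barriers and the comparison principle, then apply Lemma~\ref{lem:v} to bound $v$ and Lemma~\ref{lem:g} to bound $\mathfrak g$ (hence $\kappa$), contradicting the blow-up criterion at a finite maximal time. One small point: your step ``since $T'<T$ was arbitrary, $\kappa$ is bounded on $[0,T)$'' needs the bounds to be \emph{uniform} in $T'$; the paper secures this cleanly by assuming $T<\infty$ at the outset and using the global existence of the barriers to obtain a single compact region $B\supset F(\mathbb{S}^1\times[0,T))$, so that the constants $C$, $D_1$, $D_2$ are independent of $t_1<T$.
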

\begin{proof}
We will prove by contradiction; we suppose that $T<\infty$. 
We can take a proper open subinterval $(b, c)$ of $I$ that includes $z_0(\mathbb{S}^1) = \pi_z (F_0(\mathbb{S}^1) )$, the image of the projection of the initial curve. 
The solution to $(\ref{eq:standard1})$ exists in $[0,\infty)$ for any initial condition, in particular for $z(0) = b$ and $z(0)= c$. 
By the comparison principle, the image $F(\mathbb{S}^1 \times [0,T))$ is included in a bounded region, since the flows defined by $(\ref{eq:standard1})$ with the initial conditions $z(0) = b$ and $z(0) = c$ are contained in some bounded (especially compact) region in finite time.  
We then write the region as $B$, that is, $F(\mathbb{S}^1 \times [0,T)) \subset B$. 
The set $\pi_z(B)$ includes $I(t_1)$ defined in the proof of Theorem \ref{thm:graph1} for all $t_1 < T$, therefore we can take the constant $C$ that is independent of $t_1$. 
Therefore, we have
\begin{align*}
v_{\max}(t) \leq v_{\max}(0) e^{C T},
\end{align*}
and this implies that $\sup_{\mathbb{S}^1 \times [0,T)} v$ is bounded, and we can consider the function $\mathfrak{g}$ in Lemma $\ref{lem:g}$ on $\mathbb{S}^1 \times [0,T)$.
Since the flow is contained in a compact set $B$, all the coefficients that appear in Lemma $\ref{lem:g}$ are bounded above, so for some positive constants $D_1$, $D_2$ we have
\begin{align*}
    \left(\partial_t - \Delta\right) \mathfrak{g}
    &\leq -2 k \mathfrak{g}^2 + 4 \frac{r'}{r} \frac{\sqrt{\varphi}}{v^3} \mathfrak{g}^{3/2} - \left\{ \frac{r r'' - 2 {r'}^2}{r^2} \frac{\varphi}{v^3} \left(v - \frac{1}{v} \right) + 2 \frac{r''}{r} \right\} \mathfrak{g} \\
    &\leq -2 k \mathfrak{g}^2 + D_1 \mathfrak{g}^{3/2} + D_2 \mathfrak{g}.
\end{align*}
From the assumption that the maximal existence time $T$ is finite, it is known that the norm of the second fundamental form $\abs{A} ( = \abs{\kappa})$ must be unbounded under the mean curvature flow.
In this case, however, we have the bounded curvature under the flow. 
This is a contradiction; in the remainder, it is sufficient to show that $\mathfrak{g}$ is bounded.
For any $t_0 < T$, we choose $(x_1,t_1) \in \mathbb{S}^1 \times [0,t_0]$ that satisfies $\eta(t_0) = \max_{(x,t) \in \mathbb{S}^1 \times [0,t_0]}\mathfrak{g}(x,t)=\mathfrak{g}(x_1,t_1)$. 
We can assume $t_1 > 0$ and from the above inequality, we can rewrite it as follows:
\begin{align*}
    \mathfrak{g}(x_1,t_1)^2 \leq
    D_1 \mathfrak{g}(x_1,t_1)^{3/2} + D_2 \mathfrak{g}(x_1,t_1),
\end{align*}
that is, 
\begin{align*}
    \eta(t_0)^2 \leq
    D_1 \eta(t_0)^{3/2} + D_2 \eta(t_0).
\end{align*}
This shows that $\eta$ is bounded in $[0,T)$, as is $\mathfrak{g}$. 
\end{proof}

\begin{lemma}
\label{lem:longtime}
We assume that $I =  (-\infty, a)$ and $r'(z) > 0$ in $I$. 
If we have
\begin{align*}
\sup_{z \in I} \frac{r'(z)}{r(z)} < \infty,
\end{align*}
then the solution to $(\ref{eq:standard1})$ exists on $[0,\infty)$ for any $z(0) \in I$.
\end{lemma}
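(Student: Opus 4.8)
The plan is to treat (\ref{eq:standard1}) as an autonomous ODE $z'(t) = -w(z(t))$ with $w(z) = r'(z)/r(z)$, and to show that the solution cannot leave every compact subinterval of $I = (-\infty,a)$ in finite time; standard ODE continuation then forces the maximal existence time to be $+\infty$.

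First I would record the sign and monotonicity: by $(C1)$ we have $w(z) = r'(z)/r(z) > 0$ on $I$, so along any solution $z$ is strictly decreasing. Hence for $t$ in the maximal interval of existence $[0,T_{\max})$ we have $z(t) \le z_0 < a$, which already keeps the solution away from the upper endpoint $a$ of $I$. It remains to keep it away from $-\infty$. Here I would use the hypothesis: set $C := \sup_{z\in I} w(z) < \infty$. Then $\abs{z'(t)} = w(z(t)) \le C$, and integrating from $0$ to $t$ gives the a priori bound
\begin{align*}
z_0 - C t \le z(t) \le z_0 \qquad \text{for all } t \in [0,T_{\max}).
\end{align*}

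Now suppose for contradiction that $T_{\max} < \infty$. Pick any finite $T > T_{\max}$. By the displayed bound, $z(t)$ stays in the compact interval $K := [\,z_0 - C T,\ z_0\,]$, which is a compact subset of the open interval $I$, for all $t\in[0,T_{\max})$. Since $w = r'/r$ is smooth on $I$, it is Lipschitz on a slightly larger compact neighbourhood of $K$, so by the standard escape/continuation lemma for ODEs the solution can be extended to a strictly larger time interval $[0,T_{\max}+\varepsilon)$, contradicting the maximality of $T_{\max}$. Therefore $T_{\max} = \infty$, i.e.\ the solution to (\ref{eq:standard1}) exists on $[0,\infty)$ for every initial value $z_0 \in I$.

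There is no real obstacle here: the only point requiring care is that the domain $I$ is a bounded-above, unbounded-below interval, so one must separately argue that neither endpoint is reached in finite time. The upper endpoint is handled for free by monotonicity ($z$ is decreasing), and the lower endpoint is exactly where the hypothesis $\sup_{z\in I} r'(z)/r(z) < \infty$ enters, giving the linear-in-$t$ lower bound on $z(t)$ that confines the solution to a compact subinterval on any finite time interval. Combined with Lemma \ref{lem:longtime-gen}, this yields part (ii)(a) of Theorem \ref{thm:1}.
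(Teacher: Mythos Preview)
Your proof is correct and follows essentially the same route as the paper: both set $M = \sup_{z\in I} r'(z)/r(z)$, integrate the inequality $z'(t) \ge -M$ to obtain $z(t) \ge z(0) - Mt$, and conclude by the standard ODE continuation argument that a finite maximal time would allow the solution to be extended, a contradiction. Your write-up is a bit more explicit about handling the two endpoints of $I$ separately (monotonicity for the upper, the linear bound for the lower), but the substance is identical; one small quibble is that you cite ``$(C1)$'' for $r'>0$, which in this paper is the label for the $n\ge 2$ hypothesis, whereas here $r'>0$ is simply part of the lemma's own assumptions.
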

\begin{proof}
Let us set $M = \sup_{z \in I} r'(z) / r(z)$. 
From $(\ref{eq:standard1})$, we have the inequality
\begin{align*}
    \frac{d z}{d t}(t) = - \frac{r'(z(t))}{r(z(t))} \geq -M.
\end{align*}
Integrate it from $0$ to $t$ to obtain
\begin{align*}
z(t) \geq z(0) - M t.
\end{align*}
Suppose that the maximal existence time $T$ is finite; then the flow defined by $(\ref{eq:standard1})$ would converge, and we could extend the solution beyond time $T$. 
This does not occur; thus the lemma is proved.
\end{proof}

\begin{rem}
\label{rem:1}
We remark that if $\sup r' / r = \infty$, Lemma \ref{lem:longtime} does not necessarily hold. 
Let us first consider $r(z) = e^{-z^2}$ for $z \in (-\infty, 0)$, then we have $r'(z) / r(z) = -2z$. 
By a simple calculation, we get the solution to $(\ref{eq:standard1})$,
\begin{align*}
z(t)  =  z(0) e^{2 t}, \quad z(0) < 0.
\end{align*}
Thus, $z(t)$ can be defined in $[0,\infty)$ for any $z(0) \in (-\infty, 0)$.
We then set $r(z) = e^{- e^{-z}}$ for $z \in \mathbb{R}$. We get $r'(z) / r(z) = e^{-z}$ and the solution to $(\ref{eq:standard1})$
\begin{align*}
z(t) = \log(e^{z(0)} - t), \quad z(0) \in \mathbb{R}.
\end{align*}
This solution goes to infinity as $t \to e^{z(0)}$, therefore it cannot extend to infinity beyond $e^{z(0)}$.
\end{rem}

\begin{rem}
\label{rem:2}
Let us consider that $I$ is a bounded open interval and set $I = (0, a)$ with $a > 0$. 
Furthermore, we assume that $r$ is a smooth function of $\mathbb{R}$, $r(0) = 0$, and $r'(z) > 0$ for $z > 0$. 
We cannot define the metric at $z = 0$, so we can only consider it in $I$.
In this case, the maximal existence time $T$ of the flow is always finite. 
This seems to be obvious, but we will give a short proof. 
The ratio $r'(z) / r(z)$ is equal to the derivative of $\log r(z)$, hence we have 
\begin{align*}
\lim_{z \to +0 }\int_{z}^{a} \frac{r'(z)}{r(z)} d z = \log r(a) - \lim_{z \to +0 }\log r(z) = \infty.
\end{align*}
This shows that $r'(0) / r(0) = \infty$. 
Thus, we can take some positive number $\varepsilon > 0$ such that $r'(z) / r(z) > 1$ for all $0 < z < \varepsilon$, and
for the solution to (\ref{eq:standard1}) with $z(0) < a$, we can take $t_{\varepsilon}$ such that $0  < z(t) < \varepsilon$ for all $t > t_{\varepsilon}$. 
Then we have 
\begin{align*}
\frac{d z}{d t}(t) < - 1
\end{align*}
for $t > t_{\varepsilon}$, and integrate it to obtain
\begin{align*}
- \varepsilon < z(t) - z(t_{\varepsilon}) < t_{\varepsilon} - t.
\end{align*}
Therefore, the flow exists only in finite time. 
Although the maximal existence time is finite, the curvature does not necessarily blow up under the flow. 
CSF on surfaces with singularities is studied by Ma \cite{biaoma2022}. Ma showed that CSF on a flat cone (more generally, on a conic Riemann surface) converges to a singularity of the cone under some cone angle conditions.
\end{rem}

\begin{lemma}
\label{lem:derivatives}
If $\abs{r^{(i)}/r} < \infty$ for all $i \geq 1$, we have
\begin{align*}
(\partial_t - \Delta) (\partial_{s}^{(m)} \kappa )^2
\leq -2 (\partial_{s}^{(m+1)} \kappa )^2 &+ 2 \left\{ (m+3) \kappa^2 + \overline{K} \right\} (\partial_{s}^{(m)}\kappa)^2 + 2  P(\abs{\kappa}, \dots, \abs{ \partial_{s}^{(m -1)} \kappa }) \abs{\partial_{s}^{(m)} \kappa}.
\end{align*}
for all $m \geq 1$, where $P$ is a polynomial, which varies with different integers $m$. 
In particular, $P$ does not include any constant term.
\end{lemma}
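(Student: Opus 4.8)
The plan is to obtain first the evolution equation for the signed curvature $\kappa$, then to differentiate it $m$ times along the curve using the appropriate commutators, and finally to square. As a starting point I would record
\begin{align*}
(\partial_t - \Delta)\kappa = \kappa^3 + \overline{K}\kappa,
\end{align*}
which follows from the same computation as in Lemmas \ref{lem:theta} and \ref{lem:curvatures} and is consistent with the equation for $\kappa^2$ there, since $\overline{K} = -r''/r$. Along a curve shortening flow one has the two commutator identities $\partial_t\partial_s = \partial_s\partial_t + \kappa^2\partial_s$ (because the arc-length element evolves by $\partial_t\,ds = -\kappa^2\,ds$) and $\Delta\partial_s = \partial_s\Delta$ (since $\Delta = \partial_{s}^{(2)}$ on a $1$-manifold), which combine into
\begin{align*}
(\partial_t - \Delta)\partial_s f = \partial_s(\partial_t - \Delta)f + \kappa^2\,\partial_s f
\end{align*}
for every function $f$ on the flow.

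I would then prove by induction on $m \ge 1$ that
\begin{align*}
(\partial_t - \Delta)\partial_{s}^{(m)}\kappa = (m+3)\,\kappa^2\,\partial_{s}^{(m)}\kappa + \overline{K}\,\partial_{s}^{(m)}\kappa + R_m,
\end{align*}
where $R_m$ is a finite sum of monomials, each a product of at least one factor $\partial_{s}^{(j)}\kappa$ with $0\le j\le m-1$ times a bounded coefficient (a smooth function of $z$, $\Theta$ and $\langle N,E_{\theta}\rangle$ built from $\overline{K}$, $r'/r$ and their $z$-derivatives). The case $m=1$ is the direct computation $\partial_s[\kappa^3+\overline{K}\kappa]+\kappa^2\partial_s\kappa = 4\kappa^2\partial_s\kappa+\overline{K}\partial_s\kappa+(\partial_s\overline{K})\kappa$, so $R_1 = (\partial_s\overline{K})\kappa$. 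For the step $m\to m+1$ one applies the displayed commutator identity with $f=\partial_{s}^{(m)}\kappa$: the derivative $\partial_s$ acting on $(m+3)\kappa^2\partial_{s}^{(m)}\kappa$ together with the commutator term $\kappa^2\partial_{s}^{(m+1)}\kappa$ yields the coefficient $m+4$; the term $\overline{K}\partial_{s}^{(m)}\kappa$ reproduces $\overline{K}\partial_{s}^{(m+1)}\kappa$ plus the lower-order term $(\partial_s\overline{K})\partial_{s}^{(m)}\kappa$; and $\partial_s R_m$ remains of the required form, because differentiating a coefficient along the curve uses $\partial_s z = -\langle N,E_{\theta}\rangle$, $\partial_s\Theta = (\frac{r'}{r}\Theta-\kappa)\langle N,E_{\theta}\rangle$ and $\partial_s\langle N,E_{\theta}\rangle = -(\frac{r'}{r}\Theta-\kappa)\Theta$, hence produces only bounded quantities and at most one extra factor $\kappa = \partial_{s}^{(0)}\kappa$. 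Boundedness of $\overline{K}$ and all its $z$-derivatives (and of $r'/r$ and its $z$-derivatives) is exactly what the hypothesis $\abs{r^{(i)}/r}<\infty$ for all $i\ge 1$ provides, via Lemma \ref{lem:rRK}; also $\abs{\Theta}\le 1$ and $\abs{\langle N,E_{\theta}\rangle}\le 1$ automatically.

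Finally I would square. From $(\partial_t-\Delta)f^2 = 2f(\partial_t-\Delta)f - 2\abs{\partial_s f}^2$ with $f=\partial_{s}^{(m)}\kappa$,
\begin{align*}
(\partial_t-\Delta)(\partial_{s}^{(m)}\kappa)^2 = -2(\partial_{s}^{(m+1)}\kappa)^2 + 2\{(m+3)\kappa^2+\overline{K}\}(\partial_{s}^{(m)}\kappa)^2 + 2R_m\,\partial_{s}^{(m)}\kappa,
\end{align*}
and bounding $R_m\,\partial_{s}^{(m)}\kappa\le\abs{R_m}\abs{\partial_{s}^{(m)}\kappa}$ while absorbing the bounded coefficients into constants yields $\abs{R_m}\le P(\abs{\kappa},\dots,\abs{\partial_{s}^{(m-1)}\kappa})$ for a polynomial $P$ with no constant term (every monomial of $R_m$ carries a $\kappa$-derivative factor). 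This is the asserted inequality.

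The main obstacle is the bookkeeping in the inductive step: one must verify that $\partial_s$ never pushes a $\kappa$-derivative factor appearing in $R_m$ above order $m-1$ — the relevant point being that $\partial_{s}^{(i)}\overline{K}$ involves only $\partial_{s}^{(\le i-2)}\kappa$, so the $\overline{K}$-derivatives are genuinely of lower order — and that the absence of a constant term, already true for $R_1=(\partial_s\overline{K})\kappa$, is preserved under $\partial_s$. No analytic estimate beyond the boundedness of the $\overline{K}^{(i)}$ is needed.
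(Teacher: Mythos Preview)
Your proposal is correct and follows essentially the same approach as the paper: start from $(\partial_t-\Delta)\kappa=\kappa^3+\overline{K}\kappa$, use the commutator $\partial_t\partial_s=\partial_s\partial_t+\kappa^2\partial_s$ to prove by induction that $(\partial_t-\Delta)\partial_s^{(m)}\kappa=\{(m+3)\kappa^2+\overline{K}\}\partial_s^{(m)}\kappa+(\text{lower order})$, estimate the $\partial_s^{(i)}\overline{K}$ in terms of lower-order $\kappa$-derivatives via the hypothesis and Lemma~\ref{lem:rRK}, and then square. The paper writes the remainder as $\sum_{i=0}^{m-1}P_i(\overline{K},\dots,\partial_s^{(m)}\overline{K},\kappa,\dots,\partial_s^{(i-1)}\kappa)\partial_s^{(i)}\kappa$ and separately records the bound $\abs{\partial_s^{(m)}\overline{K}}\le C+\sum_{i=0}^{m-2}C_i\abs{\partial_s^{(i)}\kappa}$, but this is only a notational difference from your $R_m$.
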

\begin{proof}
We have the evolution equation for the curvature 
\begin{align*}
(\partial_t - \Delta) \kappa = \kappa^3 + \kappa \overline{K},
\end{align*}
and the time derivative $\partial_t$ and the derivative with respect to arc length $\partial_s$ commute as follows:
\begin{align*}
\partial_t \partial_s = \partial_s \partial_t + \kappa^2 \partial_s.
\end{align*}
Thus, we have 
\begin{align*}
\partial_t \partial_s \kappa
&= \partial_s \partial_t \kappa + \kappa^2 \partial_s \kappa \\
&= \partial_s \Delta \kappa + 3 \kappa^2 \partial_s \kappa + \partial_s \kappa \overline{K} + \kappa \partial_s \overline{K} + \kappa^2 \partial_s \kappa\\
&= \Delta \partial_s \kappa + (4 \kappa^2 + \overline{K}) \partial_s \kappa + \partial_s \overline{K} \kappa.
\end{align*}
Assume that for $\partial_{s}^{(m)} \kappa$ the following holds:
\begin{align}
\label{eq:partial kappa}
(\partial_t - \Delta) \partial_s^{(m)} \kappa
= \left\{ (m+3) \kappa^2 + \overline{K} \right\} \partial_{s}^{(m)}\kappa + \sum_{i=0}^{m-1} P_i (\overline{K}, \dots, \partial_s^{(m)} \overline{K}, \kappa, \dots, \partial_{s}^{(i -1)} \kappa) \partial_s^{(i)} \kappa,
\end{align}
where each $P_i$ is a polynomial.
Then for $m+1$, we obtain
\begin{align*}
(\partial_t - \Delta) \partial_{s}^{(m+1)} \kappa 
&= \partial_s( \partial_t - \Delta) \partial_{s}^{(m)}\kappa + \kappa^2 \partial_{s}^{(m+1)} \kappa \\
&=\partial_s \left\{ (m+3) \kappa^2 + \overline{K} \right\} \partial_{s}^{(m)}\kappa + \left\{ (m+4) \kappa^2 + \overline{K} \right\} \partial_{s}^{(m+1)}\kappa \\
&+ \sum_{i=0}^{m-1} \partial_s  P_i (\overline{K}, \dots, \partial_s^{(m)} \overline{K}, \kappa, \dots, \partial_{s}^{(i -1)} \kappa) \partial_s^{(i)} \kappa \\
&+ \sum_{i=0}^{m-1}  P_i (\overline{K}, \dots, \partial_s^{(m)} \overline{K}, \kappa, \dots, \partial_{s}^{(i -1)} \kappa) \partial_s^{(i+1)} \kappa \\
&= \left\{ (m+4) \kappa^2 + \overline{K} \right\} \partial_{s}^{(m+1)}\kappa 
+ \sum_{i=0}^{m} \widetilde{P}_i (\overline{K}, \dots, \partial_s^{(m+1)} \overline{K}, \kappa, \dots, \partial_{s}^{(i -1) \kappa }) \partial_s^{(i)} \kappa,
\end{align*}
for some polynomials $\widetilde{P}_i$, and thus the identity $(\ref{eq:partial kappa})$ holds for all $m \geq 0$ by induction. 
Also we have
\begin{align*}
\abs{\partial_s \overline{K}} 
&= \abs{- \overline{K}' \langle N, E_{\theta} \rangle} \leq \widehat{B} \\
\abs{\partial_s^2 \overline{K}} 
&= \abs{-\partial_s \overline{K}' \langle N, E_{\theta} \rangle - \overline{K}' \partial_s \langle N, E_{\theta} \rangle} \\
&= \abs{- \overline{K}'' \langle N, E_{\theta} \rangle^2 - \overline{K}' \langle \kappa \mathfrak{t}, E_{\theta} \rangle - \overline{K}' \langle N, -r'/r E_z \rangle \langle \mathfrak{t}, E_{\theta} \rangle} \leq \widehat{C} + \widehat{C}_1 \abs{\kappa},
\end{align*}
for some constants $\widehat{B}$, $\widehat{C}$, and $\widehat{C}_1$. 
Thus we similarly obtain
\begin{align*}
\abs{\partial_s^{(m)} \overline{K}}
\leq C + \sum_{i=0}^{m-2} C_i \abs{\partial_s^{(i)} \kappa}
\end{align*}
for some constants $C$ and $C_i$. 
Hence we can obtain
\begin{align*}
(\partial_t - \Delta) (\partial_s^{(m)} \kappa)^2 
&= 2 \partial_s^{(m)} \kappa (\partial_t - \Delta) \partial_s^{(m)} \kappa - 2 (\partial_s^{(m+1)} \kappa)^2 \\
&=- 2 (\partial_s^{(m+1)} \kappa)^2 + 2 \left\{ (m+3) \kappa^2 + \overline{K} \right\} (\partial_{s}^{(m)}\kappa)^2 \\
&+2 \sum_{i=0}^{m-1} P_i (\overline{K}, \dots, \partial_s^{(m)} \overline{K}, \kappa, \dots, \partial_{s}^{(i -1)} \kappa) (\partial_s^{(i)} \kappa)(\partial_s^{(m)} \kappa) \\
&\leq - 2 (\partial_s^{(m+1)} \kappa)^2 + 2 \left\{ (m+3) \kappa^2 + \overline{K} \right\} (\partial_{s}^{(m)}\kappa)^2 + P(\abs{\kappa}, \dots, \abs{ \partial_{s}^{(m -1)} \kappa }) \abs{\partial_s^{(m)} \kappa}
\end{align*}
for some polynomial $P$.
\end{proof}

Using this inequality, we obtain uniform bounds of the derivatives of the curvature if we have a uniform bound of the curvature.

\begin{lemma}
\label{lem:curvature-bounded}
If $\sup_{\mathbb{S}^1 \times [0,T)}\kappa^2 < \infty$ and $\abs{r^{(i)}/r} < \infty$ for all $i \geq 1$, we have $\sup_{\mathbb{S}^1 \times [0,T)} (\partial_{s}^{(m)} \kappa )^2 < \infty$ for all $m \geq 1$. 
\end{lemma}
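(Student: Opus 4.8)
The plan is to argue by induction on $m$, bootstrapping upward from the hypothesis $\sup_{\mathbb{S}^1\times[0,T)}\kappa^2<\infty$ and using the differential inequality of Lemma~\ref{lem:derivatives} together with Hamilton's trick. Write $f_j=(\partial_s^{(j)}\kappa)^2$ for $j\geq 1$ and $f_0=\kappa^2$. First note that since $\abs{r^{(i)}/r}<\infty$ for all $i\geq 1$ we have in particular $\abs{\overline{K}}=\abs{r''/r}<\infty$, so this is exactly the hypothesis under which Lemma~\ref{lem:derivatives} was stated; moreover, in the inductive step every polynomial $P$ appearing there will have arguments that are already known to be bounded. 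The base case $m=0$, i.e. $f_0$ bounded, is the hypothesis.

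For the inductive step, assume $f_0,\dots,f_{m-1}$ are all uniformly bounded on $\mathbb{S}^1\times[0,T)$ and aim to bound $f_m$. Applying Lemma~\ref{lem:derivatives} with index $m$, the coefficient $(m+3)\kappa^2+\overline{K}$ is bounded and the polynomial $P(\abs{\kappa},\dots,\abs{\partial_s^{(m-1)}\kappa})$ is bounded by the inductive hypothesis, so after Young's inequality on the term $2P\abs{\partial_s^{(m)}\kappa}$ and discarding $-2f_{m+1}\leq 0$ we get $(\partial_t-\Delta)f_m\leq A_m f_m+C_m$ for constants $A_m,C_m$. Applying Lemma~\ref{lem:derivatives} with index $m-1$ (or Lemma~\ref{lem:curvatures} when $m=1$), the inductive hypothesis makes every term on the right bounded \emph{except} the good negative term, giving $(\partial_t-\Delta)f_{m-1}\leq -2f_m+C_{m-1}'$. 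Now set $g=f_m+Nf_{m-1}$ with $N$ chosen so large that $A_m-2N\leq -1$; combining,
\begin{align*}
(\partial_t-\Delta)g\leq (A_m-2N)f_m+C_m+NC_{m-1}'\leq -f_m+C_m+NC_{m-1}'.
\end{align*}
Since $f_{m-1}$ is uniformly bounded by some $B_{m-1}$ we have $f_m\geq g-NB_{m-1}$, hence $(\partial_t-\Delta)g\leq -g+D$ for a constant $D$. Then Hamilton's trick applied to $g_{\max}(t)=\max_{\mathbb{S}^1}g_t$ gives $\frac{d}{dt}g_{\max}(t)\leq -g_{\max}(t)+D$, so $g_{\max}(t)\leq\max\{g_{\max}(0),D\}$ for every $t\in[0,T)$, and therefore $f_m=(\partial_s^{(m)}\kappa)^2\leq g$ is uniformly bounded. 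This closes the induction.

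The only point that needs care is that one should \emph{not} try to Gr\"onwall the evolution of $f_m$ alone: that produces a bound growing exponentially in $t$, which is useless when $T=\infty$ (the intended application inside Theorem~\ref{thm:1}(ii)). The essential device is to absorb $f_m$ using the negative term $-2f_m$ coming from the evolution of the one-lower derivative $f_{m-1}$, which is what forces the linear coefficient of $g$ to be negative and makes the maximum-principle estimate uniform in time. Beyond that, I expect the only mildly delicate bookkeeping to be tracking which quantities are "already bounded" at each stage of the induction and choosing the multiplier $N$ accordingly.
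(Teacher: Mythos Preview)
Your argument is correct and is essentially the same as the paper's: both proceed by induction, form the combination $(\partial_s^{(m)}\kappa)^2+B(\partial_s^{(m-1)}\kappa)^2$, and use the good term $-2(\partial_s^{(m)}\kappa)^2$ from the evolution of the lower-order piece (Lemma~\ref{lem:derivatives} at level $m-1$, or Lemma~\ref{lem:curvatures} when $m=1$) to force a negative linear coefficient, then apply the maximum principle. The only cosmetic difference is that the paper writes the resulting inequality directly as $(\partial_t-\Delta)g\le -\widetilde{B}g+\widetilde{P}$ (absorbing the bounded $f_{m-1}$ into $\widetilde{P}$ in one step), whereas you first obtain $-f_m+C$ and then convert to $-g+D$ via $f_m\ge g-NB_{m-1}$; these are equivalent.
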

\begin{proof}
We prove this by induction. 
Assume that $(\partial_{s}^{(k)} \kappa )^2$ are uniformly bounded for $k=1, \dots, m-1$, then we have
\begin{align*}
&(\partial_t - \Delta) ((\partial_{s}^{(m)} \kappa )^2 + B (\partial_{s}^{(m-1)} \kappa )^2) \\
&\leq 2 \left\{ -B + (m+3) \kappa^2 + \overline{K} \right\} (\partial_{s}^{(m)}\kappa)^2 
+ P(\abs{\kappa}, \dots, \abs{ \partial_{s}^{(m -1)} \kappa }) \abs{\partial_{s}^{(m)} \kappa} \\
&+2 B\left\{ (m+2) \kappa^2 + \overline{K} \right\} (\partial_{s}^{(m-1)}\kappa)^2 
+ B \widehat{P}(\abs{\kappa}, \dots, \abs{ \partial_{s}^{(m -2)} \kappa }) \abs{\partial_{s}^{(m-1)} \kappa}
\end{align*}
 for some constant $B > 0$. We choose a large $B$ and use $ \abs{\partial_{s}^{(m)} \kappa} \leq  (\partial_{s}^{(m)} \kappa)^2 + 1$, then we have
\begin{align}
\label{eq:derivatives}
\begin{split}
    (\partial_t - \Delta) ((\partial_{s}^{(m)} \kappa )^2 + B (\partial_{s}^{(m-1)} \kappa )^2) 
\leq& - \widetilde{B}  ((\partial_{s}^{(m)} \kappa )^2 + B (\partial_{s}^{(m-1)} \kappa )^2) \\
&+ \widetilde{P}(\abs{\kappa}, \dots, \abs{ \partial_{s}^{(m -1)} \kappa }),
\end{split}
\end{align}
for some constant $\widetilde{B} > 0$ and polynomial $\widetilde{P}$. 
This inequality implies that $ (\partial_{s}^{(m)} \kappa)^2$ is uniformly bounded because $(\partial_{s}^{(k)} \kappa )^2$ is uniformly bounded for $k= 0,1, \dots, m-1$. 
By induction, all derivatives are bounded; hence, the lemma is proved.
\end{proof}

Finally, we will prove Theorem $\ref{thm:1}$.

\begin{proof}[Proof of Theorem \ref{thm:1}]

Theorem \ref{thm:graph1}, Lemma \ref{lem:longtime-gen} and Lemma \ref{lem:longtime} prove (i) and (ii-a). 
We will prove the first statement of (ii-b). 
Set $\mathfrak{g}_{\max}(t) = \max_{p \in \mathbb{S}^1} \mathfrak{g}_t(p)$. 
From Lemma \ref{lem:g} and Hamilton's trick, we have
\begin{align*}
\frac{d \mathfrak{g}_{\max}}{d t}
&\leq \left\{ - 2 k\mathfrak{g}_{\max} + \frac{4 r'}{r} \frac{\sqrt{\varphi}}{v^3} {\sqrt{\mathfrak{g}_{\max}}}
- \frac{r r'' - 2 {r'}^2 }{r^2} \left( v - \frac{1}{v} \right) \frac{2 \varphi}{v^3}  - 2 \frac{r''}{r} \right\} \mathfrak{g}_{\max} \\
&\leq \left( - 2 k\mathfrak{g}_{\max} + \frac{4 r'}{r} \frac{\sqrt{\varphi}}{v^3} {\sqrt{\mathfrak{g}_{\max}}} \right)\mathfrak{g}_{\max} \\
&\leq \left( - 2 k\mathfrak{g}_{\max} + C \frac{ r'}{r} \right) \mathfrak{g}_{\max},
\end{align*}
for a positive constant $C > 0$. 
For any $\varepsilon > 0$, we can take a large $t_{\varepsilon}$ so that for $t > t_{\varepsilon}$
\begin{align*}
C\frac{r'}{r}(z(t)) < 2 k \varepsilon,
\end{align*}
from Lemma \ref{lem:curv-dist} since the flow goes to negative infinity by the comparison principle. 
Thus, we have
\begin{align*}
\frac{d \mathfrak{g}_{\max}}{d t}(t) \leq 2 k (-\mathfrak{g}_{\max}(t) + \varepsilon) \mathfrak{g}_{\max}.
\end{align*}
If $\mathfrak{g}_{\max}(t') > \varepsilon$ for some $t' \geq t_{\varepsilon}$, we have $(d \mathfrak{g}_{\max}/d t)(t') < 0$, thus we can consider the following two cases:
\begin{itemize}
\item There exists a time $t'_{\varepsilon} \geq t_{\varepsilon}$ such that $\mathfrak{g}_{\max}(t) \leq \varepsilon$ for all $t \geq t'_{\varepsilon}$; or 
\item  $\mathfrak{g}_{\max}(t) > \varepsilon$ and $(d \mathfrak{g}_{\max} / d t) (t) < 0$ for all $t > t_{\varepsilon}$.
\end{itemize}
However, the latter case does not occur. In fact, we suppose that the latter holds. 
Then there is a limit $\lim_{t \to \infty} \mathfrak{g}_{\max}(t) = a \geq \varepsilon$. We can take a large $t_0$ such that
\begin{align*}
\frac{d \mathfrak{g}_{\max}}{d t}
&\leq \left\{ - 2 k\mathfrak{g}_{\max}(t) + \frac{4 r'}{r} \frac{\sqrt{\varphi}}{v^3} {\sqrt{\mathfrak{g}_{\max}}} \right\}\mathfrak{g}_{\max} \\
&\leq -D \mathfrak{g}_{\max}(t)
\end{align*}
for some positive constant $D$ and $t \geq t_0$ since $r'(z) / r(z)$ goes to zero from Lemma \ref{lem:curv-dist} in Section \ref{sec:basic}. Thus, we have
\begin{align*}
\mathfrak{g}_{\max}(t) \leq \mathfrak{g}_{\max}(t_0) e^{- D t} \to 0.
\end{align*}
This contradicts the assumption; for any $\varepsilon > 0$, we have $\mathfrak{g}_{\max}(t) \leq \varepsilon$ for $t \geq t'_{\varepsilon}$, and this implies that $\kappa_t \to 0$ as $t \to \infty$. 
We will show the second statement of (ii-b), so in addition, we assume that $\sup_{z \in I}\abs{r^{(i)}(z)/r(z)} < \infty$ for all $i \geq 2$. 
Then, by Lemma \ref{lem:curvature-bounded}, $(\partial_{s}^{(m)} \kappa )^2$ is uniformly bounded for all $m \geq 1$. 
Since $\kappa \to 0$, we can prove $\partial_{s}^{(m)} \kappa \to 0$ by induction. We assume that $\partial_{s}^{(k)} \kappa$ converges to zero for $k = 0, 1, \dots, m -1$.
We set the following function
\begin{align*}
\rho(t) = \max_{p \in \mathbb{S}^1}  \left\{(\partial_{s}^{(m)} \kappa )^2 + B (\partial_{s}^{(m-1)} \kappa )^2\right\}(p, t),
\end{align*}
where $B$ is defined as in the proof of Lemma \ref{lem:curvature-bounded}. 
From inequality (\ref{eq:derivatives}) we have
\begin{align*}
\frac{d \rho}{d t}(t) 
&\leq - \widetilde{B} \rho(t) + \widetilde{P}(\abs{\kappa}, \dots, \abs{ \partial_{s}^{(m -1)} \kappa })\\
&\leq - \widetilde{B} \rho(t) +\delta(\tau)
\end{align*}
for $t \in (\tau, \infty)$, where $\delta(\tau) = \sup_{\mathbb{S}^1 \times (\tau,\infty)} \widetilde{P}(\abs{\kappa}, \dots, \abs{ \partial_{s}^{(m -1)} \kappa }) \to 0$ as $\tau \to \infty$ by the assumption of induction. Then we obtain
\begin{align*}
\rho(t) \leq \frac{\delta(\tau)}{\widetilde{B}} + e^{\widetilde{B}(\tau - t)} \left( \rho(\tau) - \frac{\delta(\tau)}{\widetilde{B}} \right).
\end{align*}
For any $\varepsilon > 0$, we can take sufficiently large $\tau$ such that
\begin{align*}
\rho(t) \leq e^{\widetilde{B}(\tau - t)} \rho(\tau) + \varepsilon,
\end{align*}
then we have
\begin{align*}
    \limsup_{t \to \infty} \rho(t) \leq \varepsilon.
    \end{align*}
This implies $\lim_{t \to \infty} (\partial_{s}^{(m)} \kappa )^2 = 0$ and proves the theorem by induction.
\end{proof}
%%%%%%%%%%%%%%%%%%%%%%%%%%%%%%%%%%%%%%%%%%%%%%%%%%%%%%%%%%

\section{Motion of hypersurfaces}
\label{sec:case2}
We then consider the case that $n \geq 2$ in this section. 

First we recall the conditions that we assume on the Riemannian manifold $(M, g_M)$ and the function $r \colon (-\infty, a) \to \mathbb{R}$:
\begin{itemize}
\item[(C0)] $\mathrm{Ric}_M \geq n \rho g_M$; 
\item[(C1)] $r'(z) > 0$ for all  $s \in (-\infty,a)$; and
\item[(C2)] $r(z) r''(z) - (1 + \alpha){r'(z)}^2 + \rho \geq c$,
\end{itemize}
where $\rho \in \mathbb{R}$, $c = \max\{\rho, 0\}$ and $\alpha > 1$.
We assume these conditions throughout this section. 
The idea of proofs of Lemma $\ref{lem:f}$ and Theorem \ref{thm:graph2} is originally from the paper \cite{huang2019}.

\begin{lemma}
\label{lem:f}
Define $f = \Theta^2$, then we have 
\begin{align*}
    \left(\partial_t - \Delta\right) f \geq \left\langle \nabla f, \frac{2 r'}{r} E_z - \frac{\nabla f}{2 f} \right\rangle + \frac{2 n (1- f)}{r^2} \{ {r'}^2 (\alpha f -1) + c f \}.
\end{align*}
\end{lemma}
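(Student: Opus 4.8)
The plan is to reduce everything to the evolution equation for $\Theta$ established in Lemma~\ref{lem:theta}. Since $f = \Theta^2$, one has $\nabla f = 2\Theta\,\nabla\Theta$, $\Delta f = 2\Theta\,\Delta\Theta + 2\abs{\nabla\Theta}^2$ and $\partial_t f = 2\Theta\,\partial_t\Theta$, so that
\begin{align*}
(\partial_t - \Delta) f = 2\Theta\,(\partial_t - \Delta)\Theta - 2\abs{\nabla\Theta}^2 .
\end{align*}
Substituting the $n\geq 2$ formula of Lemma~\ref{lem:theta}, the first-order term $4\frac{r'}{r}\Theta\,\langle\nabla\Theta, E_z\rangle$ becomes $\frac{2r'}{r}\langle\nabla f, E_z\rangle$, and since $\Theta > 0$ (because $F_0$ is a graph, together with part~(i)) we have $-2\abs{\nabla\Theta}^2 = -\abs{\nabla f}^2/(2f)$; these two pieces assemble into $\langle\nabla f,\ \frac{2r'}{r}E_z - \frac{\nabla f}{2f}\rangle$, which is exactly the first term on the right-hand side of the claimed inequality. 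Everything else is of order zero in the derivatives of $f$, so the task is to bound it below.

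Concretely, I would next estimate the zeroth-order part
\begin{align*}
-4\frac{r'}{r}\Theta H + 2\abs{A}^2\Theta^2 + 2n\frac{{r'}^2}{r^2}\Theta^2 + \frac{2\Theta^2(1-\Theta^2)}{r^2}\bigl\{ n(rr'' - {r'}^2) + \mathrm{Ric}_M(v_M,v_M) \bigr\} .
\end{align*}
The Cauchy--Schwarz inequality $H^2 \leq n\abs{A}^2$ gives $2\abs{A}^2\Theta^2 - 4\frac{r'}{r}\Theta H \geq \frac{2}{n}(H\Theta)^2 - 4\frac{r'}{r}(H\Theta)$, and completing the square in $H\Theta$ shows this is $\geq -2n\frac{{r'}^2}{r^2}$. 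Combined with the term $2n\frac{{r'}^2}{r^2}\Theta^2$ this leaves $-2n\frac{{r'}^2}{r^2}(1-\Theta^2)$, so the zeroth-order part is bounded below by
\begin{align*}
\frac{2(1-\Theta^2)}{r^2}\Bigl[ -n{r'}^2 + \Theta^2\bigl( n(rr'' - {r'}^2) + \mathrm{Ric}_M(v_M,v_M) \bigr) \Bigr] .
\end{align*}

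To finish, I would feed in the structural hypotheses. When $v_M \neq 0$, condition (C0) together with $\abs{v_M} = 1$ gives $\mathrm{Ric}_M(v_M,v_M) \geq n\rho$; when $v_M = 0$ one has $N = \Theta E_z$, hence $\Theta^2 = 1$, and the prefactor $1-\Theta^2$ annihilates that term, so the estimate is automatic there. Thus $n(rr''-{r'}^2) + \mathrm{Ric}_M(v_M,v_M) \geq n(rr'' - {r'}^2 + \rho)$, and rewriting (C2) as $rr'' - {r'}^2 + \rho \geq c + \alpha{r'}^2$ turns the bracket into $n\bigl[{r'}^2(\alpha\Theta^2 - 1) + c\Theta^2\bigr]$; recalling $f = \Theta^2$, the whole lower bound equals $\frac{2n(1-f)}{r^2}\{{r'}^2(\alpha f - 1) + cf\}$, as desired. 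I do not expect a genuine obstacle: the only points requiring care are the sign bookkeeping while completing the square, the observation that the factor $1-\Theta^2$ makes the degenerate case $v_M = 0$ harmless, and the fact that the normalization $\abs{v_M}=1$ is precisely what is needed to pass from the tensor inequality (C0) to the scalar bound $\mathrm{Ric}_M(v_M,v_M)\geq n\rho$.
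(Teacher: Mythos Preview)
Your proof is correct and follows essentially the same route as the paper: reduce to the evolution of $\Theta$ from Lemma~\ref{lem:theta}, handle the gradient terms via $\nabla f = 2\Theta\nabla\Theta$, then use $H^2\leq n\abs{A}^2$ together with a square completion to absorb the $-4\frac{r'}{r}\Theta H$ term, and finally invoke (C0) and (C2). The only cosmetic difference is that the paper completes the square in the variable $\abs{A}\sqrt{f}$ (after first estimating $-\Theta H\geq -\sqrt{n}\abs{A}\sqrt{f}$), whereas you complete it in $H\Theta$ directly; both yield the same lower bound $-2n\frac{{r'}^2}{r^2}$, and your version has the mild advantage of not needing to track the sign of $H$. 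Your explicit treatment of the degenerate case $v_M=0$ (where $1-\Theta^2=0$ kills the Ricci term) is a nice point that the paper leaves implicit.
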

\begin{proof}
From Lemma \ref{lem:theta} and $H^2 \leq n \abs{A}^2$, we can compute it as follows:
\begin{align*}
\left(\partial_t - \Delta\right) f 
&= 2 \Theta (\partial_t - \Delta) \Theta - \frac{\abs{\nabla f}^2}{2 f} \\
&\geq \left\langle \nabla f, 2 \frac{r'}{r}  E_z - \frac{\nabla f}{2 f} \right\rangle - \frac{4 r'}{r} \sqrt{n} \abs{A} \sqrt{f} + 2 \abs{A}^2 f \\
    & \quad + 2 n \frac{{r'}^2}{r^2} f + \frac{2 f (1 - f)}{r^2} \{ n (r r'' - {r'}^2) + \mathrm{Ric}_M(v_M, v_M) \}   \\
&= \left\langle \nabla f, 2 \frac{r'}{r}  E_z - \frac{\nabla f}{2 f}  \right\rangle +2 \left( \abs{A} \sqrt{f} -\frac{\sqrt{n} r'}{r} \right)^2 - 2n \frac{{r'}^2}{r^2} \\
& \quad + 2 n \frac{{r'}^2}{r^2} f + \frac{2 f (1 - f)}{r^2} \{ n (r r'' - {r'}^2) + \mathrm{Ric}_M(v_M, v_M) \}  ]\\
&\geq \left\langle \nabla f, 2 \frac{r'}{r}  E_z - \frac{\nabla f}{2 f}  \right\rangle 
-2 n \frac{{r'}^2}{r^2} (1 - f) + \frac{2 f (1 - f)}{r^2} \{ n (r r'' - {r'}^2) + n \rho  \}\\
&\geq \left\langle \nabla f, 2 \frac{r'}{r}  E_z - \frac{\nabla f}{2 f} \right \rangle 
-2 n \frac{{r'}^2}{r^2} (1 - f) + \frac{2n f (1 - f)}{r^2} (\alpha {r'}^2 + c)  \\
&= \left\langle \nabla f, 2 \frac{r'}{r}  E_z - \frac{\nabla f}{2 f} \right\rangle 
+\frac{2n(1 - f)}{r^2} \{ {r'}^2 (\alpha f - 1) + c f) \}.
\end{align*}

\end{proof}

\begin{theorem}
\label{thm:graph2}
Assume (C0), (C1), (C2) and the initial condition 
  \begin{align*}
  \min_{p \in M_0} \Theta_0(p) > \alpha^{-1/2},
  \end{align*}
hold.
Then, for some positive constant $c$, we have $\Theta_t(x) \geq c$, that is, $F_t$ is a graph for all $t \in [0,T)$.
\end{theorem}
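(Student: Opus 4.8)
The plan is to run a parabolic minimum principle for $f=\Theta^2$ using the differential inequality of Lemma \ref{lem:f}. First I would record three elementary facts: $f=\langle N,E_z\rangle^2\le 1$ everywhere (since $\abs{N}=\abs{E_z}=1$); $r'>0$ on $I$ by (C1); and $\alpha>1$, so $\alpha^{-1/2}\in(0,1)$ and the hypothesis $\min_{M_0}\Theta_0>\alpha^{-1/2}$ in particular forces $\Theta_0>0$. Set $f_{\min}(t)=\min_{M}f_t$; since $M$ is closed this is locally Lipschitz, and by Hamilton's trick (as already used for $v_{\max}$ in the proof of Theorem \ref{thm:graph1}) one has $\tfrac{d}{dt}f_{\min}(t)=\partial_t f(p_t,t)$ for a.e.\ $t$, at any spatial minimizer $p_t$ of $f_t$.

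Next I would evaluate the inequality of Lemma \ref{lem:f} at $p_t$. There $\nabla f=0$, so the term $\langle\nabla f,\tfrac{2r'}{r}E_z-\tfrac{\nabla f}{2f}\rangle$ drops out, and $\Delta f\ge 0$, so $\partial_t f\ge(\partial_t-\Delta)f$ there; hence
\begin{align*}
\frac{d}{dt}f_{\min}(t)\ \ge\ \frac{2n\,(1-f_{\min})}{r^2}\bigl\{{r'}^2(\alpha f_{\min}-1)+c\,f_{\min}\bigr\}
\end{align*}
for a.e.\ $t$, with $r,r'$ evaluated at the height of $p_t$. The crucial point is the sign of the right-hand side: whenever $f_{\min}\in(\alpha^{-1},1]$ we have $1-f_{\min}\ge 0$, ${r'}^2>0$, $\alpha f_{\min}-1>0$ and $c\,f_{\min}\ge 0$ (since $c=\max\{\rho,0\}\ge 0$), so the whole expression is $\ge 0$. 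Thus $f_{\min}$ is nondecreasing on any time interval on which it stays in $(\alpha^{-1},1]$, and since it is always $\le 1$, on any interval on which it stays $>\alpha^{-1}$.

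Since $f_{\min}(0)=(\min_{M_0}\Theta_0)^2>\alpha^{-1}$ by hypothesis, a standard continuity argument now gives $f_{\min}(t)\ge f_{\min}(0)$ for all $t\in[0,T)$: $f_{\min}$ stays $>\alpha^{-1}$ on a right neighbourhood of $0$, hence is nondecreasing and $\ge f_{\min}(0)$ there; and if it ever dropped below $f_{\min}(0)$, then taking the first such time $\tau$ one has $f_{\min}(\tau)=f_{\min}(0)>\alpha^{-1}$ by continuity, and the same argument started at $\tau$ forces $f_{\min}\ge f_{\min}(0)$ on a right neighbourhood of $\tau$, contradicting the choice of $\tau$. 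Hence $\abs{\Theta_t}\ge\min_{M_0}\Theta_0$ for all $t$; combined with $\Theta_0>0$ and the continuity of $\Theta_t$ in $t$, the value $\Theta_t(p)$ can never vanish, so in fact $\Theta_t\ge c:=\min_{M_0}\Theta_0>\alpha^{-1/2}>0$, i.e.\ $F_t$ remains a graph on $[0,T)$.

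I do not expect a real obstacle: the geometric content is entirely contained in Lemma \ref{lem:f}, and what remains is the sign bookkeeping of the reaction term — where condition (C1) and $\alpha>1$ enter decisively — together with the elementary continuity argument keeping $f_{\min}$ above $\alpha^{-1}$ and the passage from $\Theta^2\ge c^2$ to $\Theta\ge c$.
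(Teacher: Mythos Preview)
Your proposal is correct and follows essentially the same route as the paper: apply Hamilton's trick to $f_{\min}=\min_M f_t$, use Lemma~\ref{lem:f} at the spatial minimizer to drop the gradient term and obtain $\tfrac{d}{dt}f_{\min}\ge\tfrac{2n(1-f_{\min})}{r^2}\{r'^2(\alpha f_{\min}-1)+c f_{\min}\}\ge 0$ whenever $f_{\min}>\alpha^{-1}$, and conclude $f_{\min}(t)\ge f_{\min}(0)$. Your write-up is in fact a bit more careful than the paper's (you track the borderline case $f_{\min}=1$, make the continuity argument explicit, and justify the passage from $\Theta^2\ge c^2$ to $\Theta\ge c$), but the method is the same.
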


\begin{proof}
From the assumption for the theorem, we have $\min_{p \in M} f_0(p) > 1 / \alpha$. 
Thus, $f_t$ is away from zero in a short time. 
We then set $\mu(t) = min_{p \in M} f_t(p)$, and from Lemma \ref{lem:f} we have 
\begin{align*}
\frac{d \mu}{d t} \geq \frac{2n(1 - \mu)}{r^2} \{ {r'}^2 (\alpha \mu - 1) + c \mu) \} > 0.
\end{align*}
Since $\mu$ increases monotonically, we have $\mu(t) \geq \mu(0) > 0$ for all $t \in [0, T)$. 
This proves that the flow remains a graph.
\end{proof}

\begin{rem}
The estimate of the function $f$ (or $\mu$) is easier to derive than the estimate in \cite{huang2019} since condition (C2) is stronger to treat the case $\lim_{z \to -\infty} r(z) = 0$. 
In Section 3 of \cite{huang2019}, the estimate of the function $f$ is derived with more care using $r(0) = 1 (> 0)$.
\end{rem}

Before we prove Theorem \ref{thm:2}, we give here two lemmas corresponding to Lemma \ref{eq:derivatives} and inequality $(\ref{eq:derivatives})$ (see also the proof of Lemma 7.2 in \cite{huisken1986contracting}). 

\begin{lemma}
\label{lem:higher-A}
For $\abs{\nabla^m A}^2$, we have
\begin{align*}
(\partial_t - \Delta) \abs{\nabla^m A}^2 
\leq &- 2 \abs{\nabla^{m+1} A}^2 + P(\abs{A}, \dots, \abs{\nabla^{m-1} A}, \abs{R}, \dots, \abs{\overline{\nabla}^{m+1} \overline{R}}) \abs{\nabla^m A}^2 \\
&+ Q(\abs{A}, \dots, \abs{\nabla^{m-1} A}, \abs{R}, \dots, \abs{\overline{\nabla}^{m+1} \overline{R}}),
\end{align*}
where $P$ and $Q$ are polynomials and do not contain any constant term.
\end{lemma}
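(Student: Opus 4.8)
The plan is to mimic the standard computation of Huisken (Lemma 7.2 in \cite{huisken1986contracting}) for the evolution of the higher-order covariant derivatives of the second fundamental form, adapted to the warped-product ambient metric. First I would recall the general Simons-type identity for $\nabla^m A$ under mean curvature flow in an arbitrary ambient manifold $\overline M$: schematically,
\begin{align*}
(\partial_t - \Delta)\nabla^m A = \sum_{i+j+k = m} \nabla^i A * \nabla^j A * \nabla^k A + \sum_{i + j \le m} \nabla^i A * \overline\nabla^{\,j}\overline R + \overline\nabla^{\,m}\overline R * A,
\end{align*}
where $*$ denotes a contraction by the ambient metric with bounded universal coefficients, and the terms linear in $A$ on the right are precisely the ones that produce the inhomogeneous polynomial $Q$. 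Here one uses that on the hypersurface the ambient curvature $\overline R$ and its tangential/normal derivatives appear; since $\overline\nabla$ of $\overline R$ restricted to $M_t$ can be written in terms of $\overline\nabla^{\,j}\overline R$ evaluated along $F_t$, these are legitimate arguments of the polynomials $P$ and $Q$.

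Next I would pass from $\nabla^m A$ to $\abs{\nabla^m A}^2$ via
\begin{align*}
(\partial_t - \Delta)\abs{\nabla^m A}^2 = 2\langle \nabla^m A, (\partial_t - \Delta)\nabla^m A\rangle - 2\abs{\nabla^{m+1} A}^2,
\end{align*}
which immediately yields the leading $-2\abs{\nabla^{m+1}A}^2$ term. Into the inner-product term I substitute the schematic evolution equation above. The cubic terms $\nabla^i A * \nabla^j A * \nabla^k A$ with $i+j+k=m$: the one with a top-order factor, i.e. $i=m$ (or $j=m$ or $k=m$), contributes $A*A*\nabla^m A * \nabla^m A$, which is bounded by $P\cdot\abs{\nabla^m A}^2$ with $P$ a polynomial in $\abs A$; every remaining cubic term has all three indices $<m$ except for exactly one equal to $m$ already handled, so the rest contribute $\big(\text{poly in }\abs A,\dots,\abs{\nabla^{m-1}A}\big)\cdot\abs{\nabla^m A}$, which I split by Young's inequality $ab \le a^2 + b^2/4$ into a $P\abs{\nabla^m A}^2$ piece and a $Q$ piece. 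The curvature terms $\nabla^i A * \overline\nabla^{\,j}\overline R$ with $i<m$ times $\nabla^m A$, and $\overline\nabla^{\,m}\overline R * A * \nabla^m A$, are treated the same way by Young's inequality, absorbing the highest factor $\abs{\nabla^m A}$ and dumping the rest into $Q$; note $\overline\nabla^{\,j}\overline R$ for $j\le m+1$ appears because one more derivative of $\overline R$ enters from the $\nabla$ in $\Delta$ and from commuting $\nabla$ with $\partial_t$. Since no term on the right side of the schematic identity is a constant (every monomial carries at least one factor of $A$, $\nabla^i A$, $\overline R$ or $\overline\nabla^{\,j}\overline R$), the resulting $P$ and $Q$ have no constant term, as claimed.

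The main obstacle is bookkeeping rather than conceptual: one must carefully track that commuting $\nabla$ past $\partial_t$ and past $\Delta$ (which introduces factors of $A*A$ and $A*\overline R$) does not raise the order of $A$ beyond $m$ in the coefficient of $\abs{\nabla^m A}^2$, nor beyond $m-1$ in the purely inhomogeneous part — this is exactly where the induction structure (already used for Lemma \ref{lem:derivatives} in the $n=1$ case) is invoked: assuming the stated form for $\nabla^k A$, $k<m$, one derives it for $\nabla^m A$. The warped-product geometry enters only through the explicit expressions for $\overline R$ and its derivatives from Lemma \ref{lem:nabla bar 2}, but for this lemma those need not be made explicit; it suffices that $\abs{\overline\nabla^{\,j}\overline R}$ along the flow are quantities that can legitimately appear in $P$ and $Q$. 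Hence the proof reduces to the universal calculation, and the warped-product-specific estimates $\abs{\overline\nabla^{\,j}\overline R}\to 0$ (via Lemma \ref{lem:rR}) will be applied only later, in the proof of Theorem \ref{thm:2}(ii).
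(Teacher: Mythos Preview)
Your proposal is correct and follows exactly the approach the paper intends: the paper does not give an independent proof of this lemma but simply refers to Lemma~7.2 in \cite{huisken1986contracting}, and your sketch is precisely the standard Huisken computation adapted verbatim to a general ambient space. The only content the paper adds is the parenthetical remark that this lemma is the $n\geq 2$ analogue of Lemma~\ref{lem:derivatives}, which is the same inductive bookkeeping you describe.
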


\begin{lemma}
    \label{lem:higher-bound}
    We have
\begin{align*}
(\partial_t - \Delta) ( \abs{\nabla^m A}^2 + B \abs{\nabla^{m-1} A}^2 )
\leq &- \widetilde{B} ( \abs{\nabla^m A}^2 + B \abs{\nabla^{m-1} A}^2 ) \\
&+ \widetilde{P}(\abs{A}, \dots, \abs{\nabla^{m-1} A}, \abs{R}, \dots, \abs{\overline{\nabla}^{m+1} \overline{R}}),
\end{align*}
for some positive constants $B$, $\widetilde{B}$ and polynomial $\widetilde{P}$ that does not have any constant term.
\end{lemma}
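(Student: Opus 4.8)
The plan is to reproduce, for higher covariant derivatives of $A$, the passage from the evolution inequality of Lemma \ref{lem:higher-A} to inequality (\ref{eq:derivatives}) that was carried out in the proof of Lemma \ref{lem:curvature-bounded} for curves; the mechanism is the classical device of Huisken (cf. the proof of Lemma 7.2 in \cite{huisken1986contracting}). First I would apply Lemma \ref{lem:higher-A} at order $m$,
\begin{align*}
(\partial_t - \Delta)\abs{\nabla^m A}^2 \le -2\abs{\nabla^{m+1} A}^2 + P_m\,\abs{\nabla^m A}^2 + Q_m,
\end{align*}
and at order $m-1$ (replacing this application by Lemma \ref{lem:curvatures} when $m=1$),
\begin{align*}
(\partial_t - \Delta)\abs{\nabla^{m-1} A}^2 \le -2\abs{\nabla^{m} A}^2 + P_{m-1}\,\abs{\nabla^{m-1} A}^2 + Q_{m-1},
\end{align*}
where $P_m,Q_m$ are polynomials without constant term in $\abs{A},\dots,\abs{\nabla^{m-1}A}$ and $\abs{R},\dots,\abs{\overline{\nabla}^{m+1}\overline{R}}$, while $P_{m-1},Q_{m-1}$ have the same form but involve only $\abs{A},\dots,\abs{\nabla^{m-2}A}$ and $\abs{R},\dots,\abs{\overline{\nabla}^{m}\overline{R}}$. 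The decisive feature is that the order-$(m-1)$ evolution contributes the \emph{negative} term $-2\abs{\nabla^m A}^2$, which the order-$m$ evolution by itself does not provide: there the coefficient of $\abs{\nabla^m A}^2$ is only the a priori positive quantity $P_m$.

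Next I would add $B$ times the second inequality to the first, for a constant $B>0$ to be chosen, obtaining
\begin{align*}
(\partial_t-\Delta)\bigl(\abs{\nabla^m A}^2 + B\abs{\nabla^{m-1}A}^2\bigr)
&\le -2\abs{\nabla^{m+1}A}^2 + (P_m - 2B)\abs{\nabla^m A}^2 \\
&\quad + Q_m + B\,P_{m-1}\abs{\nabla^{m-1}A}^2 + B\,Q_{m-1}.
\end{align*}
I would then fix $B$ so large that $P_m - 2B \le -\widetilde{B}$ for a prescribed $\widetilde{B}>0$. This is legitimate precisely in the setting where the lemma is used, namely once $\abs{A},\dots,\abs{\nabla^{m-1}A}$ and the ambient curvature quantities $\abs{R},\dots,\abs{\overline{\nabla}^{m+1}\overline{R}}$ are already known to be bounded --- exactly as $\sup\kappa^2<\infty$ together with the induction hypothesis made the corresponding step possible when deriving (\ref{eq:derivatives}). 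Discarding the term $-2\abs{\nabla^{m+1}A}^2\le 0$ and collecting
\begin{align*}
Q_m + B\,P_{m-1}\abs{\nabla^{m-1}A}^2 + B\,Q_{m-1} + \widetilde{B}B\abs{\nabla^{m-1}A}^2,
\end{align*}
which is a polynomial without constant term in $\abs{A},\dots,\abs{\nabla^{m-1}A},\abs{R},\dots,\abs{\overline{\nabla}^{m+1}\overline{R}}$, into a single polynomial $\widetilde{P}$ then gives the asserted inequality.

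The only genuine obstacle is the choice of $B$. Since the coefficient $P_m$ of $\abs{\nabla^m A}^2$ produced by Lemma \ref{lem:higher-A} is not a constant but a polynomial in the lower-order jet of $A$ and in the ambient curvature, the negativity of the coefficient of $\abs{\nabla^m A}^2$ cannot be had for free; it must be bought from the $-2B\abs{\nabla^m A}^2$ supplied by the $\abs{\nabla^{m-1}A}^2$-term, and this works only after the lower-order and curvature quantities have been brought under control --- which is exactly the situation (curvature bounds already in hand) in which Lemma \ref{lem:higher-bound} is invoked in the proof of part (ii) of Theorem \ref{thm:2}. Everything else is routine bookkeeping of polynomial terms with no constant term.
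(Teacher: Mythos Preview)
Your proposal is correct and follows precisely the approach the paper intends: the paper does not give an explicit proof of Lemma \ref{lem:higher-bound} but refers back to the $n=1$ derivation of inequality (\ref{eq:derivatives}) in Lemma \ref{lem:curvature-bounded} and to Huisken's Lemma~7.2, and your argument is exactly that device --- combining Lemma \ref{lem:higher-A} at orders $m$ and $m-1$ so that the good term $-2B\abs{\nabla^m A}^2$ from the lower order absorbs the bad coefficient $P_m$, with $B$ chosen using the (already established) bounds on $\abs{A},\dots,\abs{\nabla^{m-1}A}$ and on the ambient curvature. Your remark that the constants $B,\widetilde{B}$ depend on these a~priori bounds is also exactly the implicit hypothesis under which the paper states and uses the lemma.
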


\begin{proof}[Proof of Theorem \ref{thm:2}]
The long-time existence of the flow can be proved in the same way as in Lemma \ref{lem:longtime-gen}.
We will then prove (ii). Since $\abs{A}^2$ is uniformly bounded, $\abs{\nabla^i A}^2$ is uniformly bounded for all $i \geq 1$, and this is implied by the same argument as in the proof of Lemma \ref{lem:curvature-bounded} using Lemma \ref{lem:higher-bound}. 
From Lemma \ref{lem:g}, we can prove $\mathfrak{g} \to 0$ as in the proof of Theorem \ref{thm:1}. 
We note the differences between the proofs.
We have the inequality
\begin{align*}
    \frac{d \mathfrak{g}_{\max}}{d t}
    &\leq \biggl\{ - 2 k \mathfrak{g}_{\max}^{3/2} + 4 \sqrt{n} \frac{r'}{r} \frac{\sqrt{\varphi}}{v} \mathfrak{g}_{\max} + 4 \sqrt{\varphi} \abs{\overline{\nabla} \overline{R}}  \\
   &\qquad \qquad + \left[ - \frac{2}{r^2} \frac{\varphi}{v^3}  n (r r'' - {r'}^2) \left( v - \frac{1}{v}\right)  + 2 \overline{\mathrm{Ric}}(N,N) + 8 \abs{\overline{R}}  \right] \sqrt{\mathfrak{g}_{\max}} \biggr\}\sqrt{\mathfrak{g}_{\max}} \\
   &\leq \left\{ - 2 k \mathfrak{g}_{\max}^{3/2}  + P \left( \frac{r'}{r}, \frac{r''}{r}, \frac{r'''}{r} \right)  \right\}\sqrt{\mathfrak{g}_{\max}},
\end{align*}
where $P(r'/r, r''/r, r'''/r) \to 0$ as $z \to -\infty$.
Thus, for any $\varepsilon > 0$, we can take a large $t_{\varepsilon}$ such that 
\begin{align*}
P\left( \frac{r'}{r}, \frac{r''}{r}, \frac{r'''}{r} \right) < 2 k \varepsilon.
\end{align*}
Then we have the following.
\begin{align*}
 \frac{d \mathfrak{g}_{\max}}{d t} \leq 2 k ( - \mathfrak{g}_{\max}^{3/2}  +  \varepsilon ) \sqrt{\mathfrak{g}_{\max}},
\end{align*}
for $t > t_{\varepsilon}$.
If $\mathfrak{g}_{\max}^{3/2}(t') > \varepsilon$ for some $t' \geq t_{\varepsilon}$, we have $(d \mathfrak{g}_{\max}/d t)(t') < 0$, thus we can consider the following two cases:
\begin{itemize}
\item There exists a time $t'_{\varepsilon} \geq t_{\varepsilon}$ such that $\mathfrak{g}_{\max}^{3/2}(t) \leq \varepsilon$ for all $t \geq t'_{\varepsilon}$; or 
\item  $\mathfrak{g}_{\max}^{3/2}(t) > \varepsilon$ and $(d \mathfrak{g}_{\max} / d t) (t) < 0$ for all $t > t_{\varepsilon}$.
\end{itemize}
However, the latter case does not occur. In fact, we suppose that the latter holds. 
Then there is a limit $\lim_{t \to \infty} \mathfrak{g}_{\max}(t) = a \geq \varepsilon$. We can take a large $t_0$ such that
\begin{align*}
\frac{d \mathfrak{g}_{\max}}{d t}(t)
 &\leq \left\{ - 2 k \mathfrak{g}_{\max}^{3/2}  + P \left( \frac{r'}{r}, \frac{r''}{r}, \frac{r'''}{r} \right)  \right\}\sqrt{\mathfrak{g}_{\max}} \\
&\leq -D \sqrt{\mathfrak{g}_{\max}}(t)
\end{align*}
for some positive constant $D$ and $t \geq t_0$ since  $P(r'/r, r''/r, r'''/r) \to 0$ as $z \to -\infty$. Thus we obtain 
\begin{align*}
\sqrt{\mathfrak{g}_{\max}}(t) -\sqrt{\mathfrak{g}_{\max}}(t_0) \leq -\frac{D}{2} (t - t_0).
\end{align*}
However, this is a contradiction. 
Therefore, we have $\abs{A}^2 \to 0$. 
From Lemma \ref{lem:rR} and the assumption of (ii), the polynomial $\widetilde{P}$ in Lemma \ref{lem:higher-bound} goes to zero as $t \to \infty$ if $\abs{\nabla^k A} \to 0$ for $k = 0, 1, \dots, m-1$. 
Thus, also using the same induction argument, we can prove $\abs{\nabla^m A} \to 0$ for $m \geq 1$.
\end{proof}

%%%%%acknowledgement%%%%%%%%%%%%%%%%%%%%%%%%%%%%%%%%%%%%%%%%%%%%%%%%%%%%%%%%%%%%
\noindent
\textit{Acknowledgement.} 
I would like to express my gratitude to my supervisor Naoyuki Koike for valuable advice and encouragement.

\appendix
\def\thesection{Appendix}
\section{A counterexample}

We give an example of an MCF that does not preserve a graph in a warped product manifold. 
The idea of construction comes from \cite{Zhou2017}.
Let $(\overline{M}, \overline{g})$ be an $(n+1)$-dimensional Euclidean space except its origin, that is, we take $(M, g)$ as a standard $n$-dimensional sphere of radius 1, $(\mathbb{S}^n, g_S)$, and define
\begin{align*}
\begin{cases}
\overline{M} = \mathbb{S}^n \times (0,\infty), \\
\overline{g} = z^2 g_S + d z \otimes d z.
\end{cases}
\end{align*}
Let $z_{\varepsilon, r_0, r_1} \colon \mathbb{S}^n \to (0,\infty)$ be define
\begin{align*}
z_{\varepsilon, r_0, r_1}(\omega) = 
\begin{cases}
r_0 & \omega_1 \leq 0 \\
\eta(\sqrt{1 - (\omega_1)^2}) & \omega_1 > 0,
\end{cases}
\end{align*}
where we write $\omega = (\omega_1, \dots, \omega_{n+1}) \in \mathbb{S}^{n} \subset \mathbb{R}^{n+1}$, and we choose a function $\eta \colon [0,1) \to (0,\infty)$ satisfying that
\begin{align*}
\begin{cases}
\eta(y) \equiv r_1 &( 0 \leq y \leq \varepsilon) \\
\eta(y) \equiv r_0 & (2\varepsilon \leq y < 1) \\
d\eta/dy(y) \leq 0 & (0 \leq y < 1)
\end{cases}
\end{align*}
for any positive constant $\varepsilon < 1/2$ and $r_0 < r_1$ (see Figure \ref{fig:eta}). 

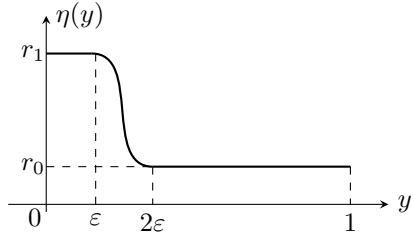
\begin{figure}[htbp]
\centering
\begin{tikzpicture}[domain=-5:5,samples=100,>=stealth]
\draw[->] (-0.5,0) -- (4.5,0) node[right] {$y$};
\draw[->] (0,-0.1) -- (0,2.5) node[right] {$\eta(y)$};

\draw[thick] (0, 2) -- (0.6,2);
\draw[thick] (0.6,2) to [out=0,in=180] (1.4,0.5);
\draw[thick] (1.4,0.5) -- (4,0.5);
\draw[dashed] (0,0.5) -- (1.4,0.5);
\draw[dashed] (0.65,2) -- (0.65,0);
\draw[dashed] (1.4,0.5) -- (1.4,0);
\draw[dashed] (4,0.5) -- (4,0);

\draw (-0.17,2) node {$r_1$};
\draw (-0.17,0.5) node {$r_0$};

\draw (0.65,0) node[below] {$\varepsilon$};
\draw (1.4,0) node[below] {$2\varepsilon$};
\draw (4,0) node[below] {$1$};

\draw (-0.15,-0.18) node{$0$};
\end{tikzpicture}  
    
    \caption{Function $\eta(y)$}
    \label{fig:eta}
\end{figure}

Let $F_0 \colon \mathbb{S}^n \to \overline{M}$ be defined by
\begin{align*}
F_0(\omega) = (\omega, z_{\varepsilon, r_0, r_1}(\omega) ),
\end{align*}
and set $M_0 = F_0(\mathbb{S}^n)$.
The hypersurface $M_0$ is also considered as a surface of revolution around the $x_1$-axis in $\mathbb{R}^{n+1}$ (see Figure \ref{fig:M_0}). 
In fact, for $\omega_1 > 0$, the projection of the hypersurface (depending only on $\omega_1$) onto the $x_1$-axis can be written as 
\begin{align*}
x_1 = 
\begin{cases}
r_0 \omega_1 & (\omega_1 \leq 0) \\
\eta(\sqrt{1 - (\omega_1)^2}) \omega_1 & (\omega_1 > 0)
\end{cases}
\end{align*}
and we have
\begin{align*}
\frac{d x_1}{d \omega_1} &= 
\begin{cases}
r_0 & (\omega_1 \leq 0) \\
\frac{d \eta}{d y}(\sqrt{1 -(\omega_1)^2}) \frac{- (\omega_1)^2}{\sqrt{1 -(\omega_1)^2}} + \eta(\sqrt{1 -(\omega_1)^2}) & (\omega_1 > 0)
\end{cases}\\
&>0.
\end{align*}
Hence $\omega_1$ and $x_1$ have a one-to-one correspondence, and we can define a function $u \colon (-1, r_1) \to (0, \infty)$ on the $x_1$-axis by
\begin{align*}
u(x_1) = 
\begin{cases}
r_0 \sqrt{1 - \omega_1(x_1)^2} & ( -1 < x_1 \leq 0) \\
\eta(\sqrt{1 - \omega_1(x_1)^2}) \sqrt{1 - \omega_1(x_1)^2} & (0 < x_1 < r_1).
\end{cases}
\end{align*}
Rotating a graph of this function $u$ around the $x_1$-axis, we obtain the original hypersurface $M_0$. 

\begin{figure}[htbp]
\centering

\begin{tikzpicture}[domain=-5:5,samples=100,>=stealth]
\draw[->,dashed] (-1.,0) -- (10,0) node[right] {$x_1$};

\draw[thick] (-0.7,0) to [out=90,in=180] (0,0.7);
\draw[thick] (0,-0.7) to [out=180,in=270] (-0.7,0);

\draw[thick] (0,0.7) to [out=0,in=95] (0.65,0.1);
\draw[thick] (0,-0.7) to [out=0,in=265] (0.65,-0.1);

\draw[thick] (0.65,0.1) to [out=265,in=15] (0.75,0.1);
\draw[thick] (0.65,-0.1) to [out=95,in=355] (0.75,-0.1);

\draw[thick] (0.75,0.1) -- (8.5,0.7);
\draw[thick] (0.75,-0.1) -- (8.5,-0.7);

\draw[thick] (8.5,0.7) to [out=0,in=90] (9.2,0);
\draw[thick] (8.5,-0.7) to [out=0,in=270] (9.2,0);

\draw (-1.05,0.12) node{$-r_0$};
\draw (9.4,0.12)  node{$r_1$};

\draw[thick] (0, 0.05) -- (0,-0.05);
\draw (0,0) node[below] {$0$};

\draw (0,-0.7) to [out=180,in=180] (0,0.7);
\draw[dotted] (0,-0.7) to [out=0,in=0] (0,0.7);

\draw (4.5,-0.4) to [out=180,in=180] (4.5,0.4);
\draw[dotted] (4.5,-0.4) to [out=0,in=0] (4.5,0.4);

\draw (8.5,-0.7) to [out=180,in=180] (8.5,0.7);
\draw[dotted] (8.5,-0.7) to [out=0,in=0] (8.5,0.7);

\end{tikzpicture}  
    \caption{Surface of revolution generated by $u$}
    \label{fig:M_0}
\end{figure}
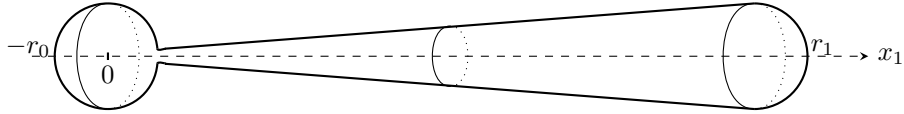
Now, we set the appropriate constants $\varepsilon$, $r_0$, and $r_1$.
First, we define $r_0$ so that $M_0$ contains the sphere shrinking to the origin under MCF at time 2. 
Then, we consider the "shrinking doughnut" of Angenent \cite{Ang} that shrinks to a point on the $x_1$-axis before time 1. 
To make $M_0$ and the shrinking doughnut disjoint, we take $\varepsilon$ sufficiently small and place the shrinking doughnut outside $M_0$.
Finally, $r_1$ is taken large enough so that $M_0$ contains another sphere shrinking to a point on the $x_1$-axis at time 2. 
Finally, we can apply Theorem 1.2 in \cite{AAG1995} to the MCF with the initial hypersurface $M_0$, then the neck will be pinched before time 1, and the hypersurface just before the singular time is no longer a geodesic graph.

%%%%%bibliography%%%%%%%%%%%%%%%%%%%%%%%%%%%%%%%%%%%%%%%%%%%%%%%%%%%%%%%%%%%%%%%%

\vspace{0.5truecm}

\begin{flushright}
{\small 
Naotoshi Fujihara \\
Department of Mathematics \\
Graduate School of Science \\
Tokyo University of Science \\
1-3 Kagurazaka \\
Shinjuku-ku \\
Tokyo 162-8601 \\
Japan \\
(E-mail: 1123706@ed.tus.ac.jp)
}
\end{flushright}

\end{document}